\newcommand{\R}{{\mathbb{R}}}
\newcommand{\E}{\mathrm{E}}
\renewcommand{\d}{\mathrm{d}}
\newcommand{\e}{\mathrm{e}}
\newcommand{\Var}{\text{\rm Var}}
\newcommand{\lip}{\text{\rm Lip}}
\DeclareMathOperator{\Cov}{\text{\rm Cov}}
\DeclarePairedDelimiter\floor{\lfloor}{\rfloor}
\def\MR#1{\href{http://www.ams.org/mathscinet-getitem?mr=#1}{MR#1}}
\title{Ergodicity, CLT and asymptotic maximum of the Airy$_1$ process}
\author{ Fei  Pu
	}
\date{}                                           
\begin{document}
\newtheorem{stat}{Statement}[section]
\newtheorem{proposition}[stat]{Proposition}
\newtheorem*{prop}{Proposition}
\newtheorem{corollary}[stat]{Corollary}
\newtheorem{theorem}[stat]{Theorem}
\newtheorem{lemma}[stat]{Lemma}
\theoremstyle{definition}
\newtheorem{definition}[stat]{Definition}
\newtheorem*{cremark}{Remark}
\newtheorem{remark}[stat]{Remark}
\newtheorem*{OP}{Open Problem}
\newtheorem{example}[stat]{Example}
\newtheorem{nota}[stat]{Notation}
\numberwithin{equation}{section}
\maketitle

\begin{abstract}
          We first show that the Airy$_1$ process is associated using the association property of the solution to the stochastic heat 
          equation and convergence of the KPZ equation to the KPZ fixed point.  Then we apply Newman's inequality to establish
          the ergodicity and central limit theorem for the Airy$_1$ process.  Combined with the asymptotic behavior of the tail probability, we derive a Poisson limit theorem for the Airy$_1$ process and give a precise estimate on the asymptotic behavior of the maximum of the Airy$_1$ process over an interval.  Analogous results for the Airy$_2$ process are also presented. 

\end{abstract}

\bigskip\bigskip

\noindent{\it \noindent MSC 2010 subject classification: 60A10, 60F05, 60H15.}
 \\
\noindent{\it Keywords: Airy$_1$ process, association, CLT, ergodicity.}


{
}

\section{Introduction and main results}\label{sec:int}

The Airy$_1$ process, $\{\mathcal{A}_1(x): x\in \R\}$, derived by Sasamoto \cite{Sas05} in the context of TASEP,  is a stationary stochastic process whose finite-dimensional distributions are given in terms of the Fredholm determinant (see \cite[Section 4.1.2]{WFS17}).  In particular, the one-point distribution is a scalar multiple of the GOE Tracy-Widom distribution, namely,
\begin{align*}
\mathrm{P}\{\mathcal{A}_1(0)\leq s\} =F_1(2s), \quad s\in\R,
\end{align*}
where $F_1$ denotes the GOE Tracy-Widom distribution.
 Moreover, it was proven by Quastel and Remenik \cite{QuR13} that the sample paths are H\"older continuous with exponent $\frac12-$. 

Recently, Basu et al. \cite{BBF23} have shown that the covariance of the Airy$_1$ process 
 decays super-exponentially by showing that $\Cov(\mathcal{A}_1(x)\,, \mathcal{A}_1(0))= \e^{-(\frac{4}{3}+o(1))x^3}$
as $x\to\infty$. This immediately implies a law of large numbers, i.e., as $N\to\infty$,
\begin{align*}
\frac1N\int_0^N \mathcal{A}_1(x)\ \mathrm{d} x \to \E[\mathcal{A}_1(0)], \quad \text{ in $L^2(\Omega)$}.
\end{align*}

We are aiming to establish the ergodicity and the matching central limit theorem for the Airy$_1$ process. 
\begin{theorem}\label{th:CLT}
          $\{\mathcal{A}_1(x): x\in \R\}$ is ergodic, and as $N\to\infty$,
          \begin{align}\label{CLT}
          \frac{1}{\sqrt{N}}\int_0^N (\mathcal{A}_1(x)-  \E[\mathcal{A}_1(x)])\ \mathrm{d} x \xrightarrow{\rm d} {\rm N}(0, \sigma^2), 
          \end{align}
          where $\sigma^2= \int_\R \Cov(\mathcal{A}_1(x)\,, \mathcal{A}_1(0)) \mathrm{d} x\in (0, \infty)$, and ``$\xrightarrow{\text{\rm d}\,}$'' denotes convergence
           in distribution.
\end{theorem}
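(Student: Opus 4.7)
The plan follows the two-step strategy announced in the abstract: first prove that the Airy$_1$ process is \emph{associated} in the sense of Esary--Proschan--Walkup, and then apply Newman's inequality and CLT for associated stationary processes, together with the covariance estimate of \cite{BBF23}, to deduce both ergodicity and \eqref{CLT}.

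For the first step, the idea is to propagate association along the chain
\[
\text{SHE}\ \longrightarrow\ \text{KPZ equation}\ \longrightarrow\ \text{KPZ fixed point}\ \longrightarrow\ \mathcal{A}_1.
\]
The mild solution $Z(t,x)$ of the multiplicative stochastic heat equation is a non-negative, monotone functional of the driving space-time white noise, as seen either from its Wiener chaos expansion (whose kernels are all non-negative) or from the Feynman--Kac/polymer representation. A direct FKG argument applied to a lattice regularization of the noise, or an approximation by Gaussian fields with non-negative covariance combined with Pitt's theorem, then gives that $\{Z(t,x):x\in\R\}$ is associated for each fixed $t>0$. Association is preserved under coordinate-wise increasing transformations, so the KPZ height $h(t,x)=\log Z(t,x)$ inherits it; and since association is preserved under weak limits of finite-dimensional distributions, the $1:2:3$ scaling limit producing the KPZ fixed point remains associated. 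Specializing to flat initial data yields association of $\{\mathcal{A}_1(x):x\in\R\}$.

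With association in hand, the remainder follows standard lines. The bound $\Cov(\mathcal{A}_1(x),\mathcal{A}_1(0))=\e^{-(\frac{4}{3}+o(1))|x|^3}$ of \cite{BBF23} gives $\sigma^2<\infty$, while $\sigma^2>0$ because all covariances are non-negative by association and $\Cov(\mathcal{A}_1(0),\mathcal{A}_1(0))=\Var(\mathcal{A}_1(0))>0$. Newman's inequality bounds the difference between the characteristic function of $\frac{1}{\sqrt N}\int_0^N(\mathcal{A}_1(x)-\E[\mathcal{A}_1(x)])\,\mathrm{d}x$ and the Gaussian characteristic function with variance $\sigma^2$ by a quantity controlled by the tail $\int_{|x|>M}\Cov(\mathcal{A}_1(x),\mathcal{A}_1(0))\,\mathrm{d}x$, which vanishes as $M\to\infty$; this yields \eqref{CLT}. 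The $\frac12-$ H\"older continuity of sample paths from \cite{QuR13} justifies passing from Riemann sums to the integral in the continuous-time version of Newman's theorem. Finally, ergodicity follows from Newman's result that a stationary associated process whose covariance tends to zero is mixing.

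The main obstacle I anticipate is the careful execution of step one: although association is routinely preserved under increasing transforms and weak limits, the association statement for the SHE must itself be established cleanly, and the successive limits (SHE regularization, KPZ equation, $1{:}2{:}3$ rescaling to the KPZ fixed point) must be set up so that finite-dimensional joint distributions converge and association is not lost at any stage. Once association and the BBF covariance decay are secured, the CLT and ergodicity reduce to essentially classical consequences of Newman's framework.
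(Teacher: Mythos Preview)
Your proposal is correct and follows essentially the same route as the paper: association of $\mathcal{A}_1$ is obtained by pushing the association of the SHE solution (which the paper cites from \cite{CKNP23} via Malliavin calculus rather than an FKG/Pitt argument) through the KPZ equation and the convergence to the KPZ fixed point with flat data, and then ergodicity and the CLT are deduced from Newman's framework together with the covariance decay of \cite{BBF23}. The only cosmetic differences are that the paper verifies ergodicity through the variance criterion of \cite{CKNP21,BaZ23} rather than invoking a mixing statement directly, and establishes the CLT by applying \cite[Theorem~2]{New80} to the discretized sequence $X_k=\int_k^{k+1}\mathcal{A}_1(x)\,\mathrm{d}x$, exactly as you anticipate when mentioning Riemann sums and the H\"older continuity from \cite{QuR13}.
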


Note that Quastel and Remenik \cite{QuR13} proved that the Airy$_1$ process fluctuates locally like a Brownian motion (see \cite[Theorem 3]{QuR13}), using a formula for the $n$-dimensional distributions of the Airy$_1$ process in terms of a Fredholm determinant on $L^2(\R)$.  
We refer to H\"agg \cite{Hag08} for the local central limit theorem for the Airy$_2$ process and Pimentel \cite{Pim18} for the local Brownian motion behavior of Airy processes.

The main tool to prove the ergodicity property and CLT \eqref{CLT} in Theorem \ref{th:CLT} is the association property of the Airy$_1$ process. 
We recall from Esary et al. \cite{EPW67} that
a random vector $X:=(X_1\,,\ldots,X_m)$ is said to be \emph{associated} if
	\begin{equation}\label{E:assoc}
		\Cov[h_1(X)\,,h_2(X)]\ge0,
	\end{equation}
for every pair of functions $h_1,h_2:\R^m\to\R$
that are nondecreasing in every coordinate and satisfy $h_1(X),h_2(X)\in L^2(\Omega)$. A random field $\Phi=\{\Phi(x)\}_{x\in\R^d}$ is
	\emph{associated} if $(\Phi(x_1)\,,\ldots,\Phi(x_m))$ is associated
	for every $x_1,\ldots,x_m\in\R^d$.
We remark that an associated random vector is also called to satisfy the FKG inequalities; see Newman \cite{New80}.

\begin{theorem}\label{th:association}
           $\{\mathcal{A}_1(x): x\in \R\}$ is associated. 
\end{theorem}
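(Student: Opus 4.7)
The plan is to propagate the association property along the chain SHE $\to$ KPZ equation (via Cole--Hopf) $\to$ KPZ fixed point $\to$ Airy$_1$, as hinted by the abstract.

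First, I would invoke the association of the spatial process $\{u(t,x):x\in\R\}$ for the multiplicative stochastic heat equation driven by space--time white noise, started from non-negative (in particular, flat) initial data. For each fixed $t>0$, $u(t,x)$ admits a Wiener chaos expansion with non-negative kernels (equivalently, a Feynman--Kac representation as the expectation of a positive exponential functional of the noise), so $u(t,x)$ is a coordinate-wise nondecreasing functional of the driving white noise. Since any Gaussian field with non-negative covariance is associated (Pitt's theorem, as emphasized by Newman \cite{New80}) and association is preserved by coordinate-wise monotone transformations of the underlying variables, the spatial process $\{u(t,x)\}_{x\in\R}$ is associated.

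Next, set $h(t,x)=\log u(t,x)$, the Cole--Hopf solution of the KPZ equation. Since $\log$ is strictly increasing, the map $(u(t,x_i))_{i=1}^{m}\mapsto(h(t,x_i))_{i=1}^{m}$ is coordinate-wise nondecreasing, so for any coordinate-wise nondecreasing $h_1,h_2$ the compositions $h_j\circ\log$ are also coordinate-wise nondecreasing, and the FKG inequality \eqref{E:assoc} transfers from $u(t,\cdot)$ to $h(t,\cdot)$.

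Third, under the KPZ $1{:}2{:}3$ rescaling with flat initial data, $h$ converges in finite-dimensional distribution to the flat-initial-data KPZ fixed point, whose time-$1$ marginal is, up to a deterministic affine correction, the Airy$_1$ process $\mathcal{A}_1$. The rescaling of space and the subtraction of a deterministic profile are coordinate-wise monotone affine operations, hence preserve association. To conclude, I use that association is stable under convergence in distribution: if $X^{(n)}\xrightarrow{\mathrm{d}} X$ in $\R^m$ and each $X^{(n)}$ is associated, then for any bounded continuous coordinate-wise nondecreasing $h_1,h_2$ one has $\Cov[h_1(X^{(n)}),h_2(X^{(n)})]\ge 0$, and passing to the limit preserves the inequality; the general $L^2$ case of \eqref{E:assoc} then follows by truncation and monotone approximation. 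Applying this at each finite collection of spatial points yields the associativity of $\{\mathcal{A}_1(x):x\in\R\}$.

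The main obstacle will be Step 1: although association of SHE solutions is essentially folklore, pinning down a precise statement that covers the flat initial data used in the Airy$_1$ limit and applies jointly to arbitrary finite collections of spatial points will require either a careful citation or a short self-contained argument, e.g.\ through positivity of the chaos kernels, or via an approximation by directed polymer partition functions where the FKG inequality holds directly on the underlying product probability space. The Cole--Hopf step and the weak-limit step are then routine consequences of the definition of association and the established convergence of the KPZ equation to the KPZ fixed point.
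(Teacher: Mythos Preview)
Your approach is essentially the same as the paper's: propagate association along SHE $\to$ Cole--Hopf KPZ $\to$ KPZ fixed point $\to$ $\mathcal{A}_1$, using that association is preserved by coordinate-wise monotone maps and by convergence in distribution (the paper cites \cite{QuS23,Vir20} for the convergence and property~(P$_5$) of \cite{EPW67} for stability under weak limits). For your acknowledged Step~1 obstacle the paper simply invokes \cite[Theorem~A.4]{CKNP23}, which proves association of the SHE solution via non-negativity of the Malliavin derivative and the Clark--Ocone formula; note that your parenthetical ``non-negative chaos kernels $\Rightarrow$ monotone in the noise'' is not correct as stated (a second-chaos term with non-negative kernel is not monotone in the Gaussian increments), so you would indeed need the Feynman--Kac/polymer-approximation fallback you mention, or the cited Malliavin argument.
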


The association property of the Airy$_1$ process leads to a fundamental inequality, known as Newman's inequality (see \eqref{Newman} below), which plays a crucial role in proving ergodicity and CLT for the Airy$_1$ process. Furthermore, Newman's inequality implies an inequality for characteristic functions (see \eqref{Newman3} below), which
together with the asymptotic behavior of the GOE Tracy-Widom distribution,
 enables us to derive a Poisson limit theorem for high-level exceedances of the Airy$_1$ process.
 
Denote by $\rm T$ the inverse function of $s\mapsto 1-F_1(s)$. We have the following Poisson approximation for the Airy$_1$ process.

\begin{theorem}\label{th:Poisson}
          Fix $\lambda>0$. Let $\{x_{k, N}\}_{N\geq1, 1\leq k\leq N} \subset \R$ satisfy
          \begin{align}\label{distance}
          \liminf_{N\to\infty}\frac{\min_{1\leq j\neq k\leq N}|x_{j,N}-x_{k,N}|}{(3\log N)^{1/3}}>1. 
          \end{align}
          Then, as $N\to\infty$, 
          \begin{align}\label{eq:Poisson}
          \sum_{k=1}^N\bm{1}_{\{\mathcal{A}_1(x_{k,N})>\frac12 {\rm T}(\lambda/N)\}}\xrightarrow{\rm d} {\rm Poisson}(\lambda),
          \end{align}
          where ${\rm Poisson}(\lambda)$ denotes a Poisson random variable with parameter $\lambda$.
\end{theorem}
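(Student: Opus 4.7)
The plan is to apply the Newman characteristic function inequality (see \eqref{Newman3}) to a Lipschitz smoothing of the indicators in \eqref{eq:Poisson}. Writing $X_k := \mathcal{A}_1(x_{k,N})$ and $t_N := \tfrac{1}{2}\mathrm{T}(\lambda/N)$, stationarity together with the definition of $\mathrm{T}$ gives $p_N := \P\{X_k > t_N\} = \lambda/N$, and the right-tail asymptotic $1 - F_1(s) \sim c\,s^{-3/4}e^{-\frac{2}{3}s^{3/2}}$ of the GOE Tracy-Widom distribution yields $t_N \sim \tfrac{1}{2}(\tfrac{3}{2}\log N)^{2/3}$. Fix $\epsilon_N = (\log N)^{-a}$ for some $a > 1/3$ and choose nondecreasing $C^1$ functions $\phi_N^\pm:\R\to[0,1]$ with Lipschitz constants $O(1/\epsilon_N)$ satisfying $\phi_N^- \leq \bm{1}_{\{x > t_N\}} \leq \phi_N^+$, the difference $\phi_N^+ - \phi_N^-$ being supported in $(t_N - \epsilon_N, t_N + \epsilon_N)$. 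Let $Z_k^\pm := \phi_N^\pm(X_k)$ and $\tilde S_N^\pm := \sum_{k=1}^N Z_k^\pm$, so that $\tilde S_N^- \leq S_N \leq \tilde S_N^+$, where $S_N$ denotes the sum in \eqref{eq:Poisson}.

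The $Z_k^\pm$ are associated as monotone functions of the associated family $\{X_k\}$ (Theorem~\ref{th:association}), and Newman's covariance bound for $C^1$ functions of associated variables (which is implicit in \eqref{Newman}) yields
\[
\Cov(Z_j^\pm, Z_k^\pm) \leq C\epsilon_N^{-2}\,\Cov(X_j, X_k).
\]
The super-exponential decay $\Cov(\mathcal{A}_1(x), \mathcal{A}_1(0)) = e^{-(4/3 + o(1))x^3}$ of \cite{BBF23}, combined with \eqref{distance}---which guarantees $|x_{j,N} - x_{k,N}| \geq (1+\delta)(3\log N)^{1/3}$ for some fixed $\delta > 0$ and all large $N$---gives $\Cov(X_j, X_k) \leq N^{-4(1+\delta)^3 + o(1)}$. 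Since $4(1+\delta)^3 > 2$ and $\epsilon_N^{-2}$ is only polylogarithmic, $\sum_{j \neq k}\Cov(Z_j^\pm, Z_k^\pm) \to 0$; applying \eqref{Newman3} with common frequency $t$ then gives
\[
\Bigl| \E[e^{it\tilde S_N^\pm}] - \prod_{k=1}^N \E[e^{itZ_k^\pm}] \Bigr| \leq \tfrac{t^2}{2}\sum_{j \neq k}\Cov(Z_j^\pm, Z_k^\pm) \longrightarrow 0.
\]

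To handle the product, differentiating the Tracy-Widom tail asymptotic shows that the density of $\mathcal{A}_1(0)$ at $t_N$ is of order $\sqrt{t_N}\,p_N \asymp (\log N)^{1/3}\,p_N$, so that $\P\{X_k \in (t_N - \epsilon_N, t_N + \epsilon_N)\} = O(\epsilon_N(\log N)^{1/3})\cdot p_N = o(p_N)$. A direct calculation then yields $\E[Z_k^\pm] = p_N(1 + o(1))$ and $\E[e^{itZ_k^\pm}] = 1 + (e^{it} - 1)\lambda/N + o(1/N)$ uniformly in $k$, whence $\prod_{k=1}^N \E[e^{itZ_k^\pm}] \to \exp(\lambda(e^{it} - 1))$, the characteristic function of $\mathrm{Poisson}(\lambda)$. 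Therefore $\tilde S_N^\pm \xrightarrow{\rm d} \mathrm{Poisson}(\lambda)$, and the sandwich $\tilde S_N^- \leq S_N \leq \tilde S_N^+$, combined with $\E[\tilde S_N^+ - \tilde S_N^-] \leq 2N\P\{\mathcal{A}_1(0) \in (t_N - \epsilon_N, t_N + \epsilon_N)\} = O(\epsilon_N (\log N)^{1/3}) \to 0$, transfers the Poisson limit to $S_N$ and proves \eqref{eq:Poisson}.

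The main obstacle is the calibration of the smoothing scale $\epsilon_N$: it must be small enough so that $\phi_N^\pm$ is a faithful approximation of $\bm{1}_{\{x > t_N\}}$ on the Tracy-Widom density scale near $t_N$ (requiring $\epsilon_N \ll (\log N)^{-1/3}$), yet large enough that the Lipschitz penalty $\epsilon_N^{-2}$ in the covariance bound does not negate the polynomial-in-$N$ decay of the covariance sum. The wide gap between the super-polynomial covariance decay $N^{-4(1+\delta)^3}$ and the $N^{2}$ pair-counting factor provides ample room for any polylogarithmic Lipschitz cost, so any $\epsilon_N = (\log N)^{-a}$ with $a > 1/3$ is admissible.
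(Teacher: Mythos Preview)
Your argument is correct and follows the same skeleton as the paper's: Lipschitz smoothing of the indicator, Newman's characteristic-function inequality \eqref{Newman3} to decouple, the super-exponential covariance decay of \cite{BBF23} to kill the off-diagonal sum, and a sandwich to recover the original indicator sum.

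The one substantive difference is the choice of smoothing window. The paper takes the smoothing interval to be $[\tfrac12\mathrm{T}(\lambda/N),\tfrac12\mathrm{T}(\lambda/(N+1))]$, which is of width $\asymp N^{-1}(\log N)^{-1/3}$ and hence yields a Lipschitz constant that is \emph{polynomial} in $N$; the benefit is that the expectations $\E[g_N(\mathcal{A}_1(0))]$ are automatically trapped in $[\lambda/(N+1),\lambda/N]$, so the i.i.d.\ comparison step is immediate via moment generating functions, at the price of a separate lemma (Lemma~\ref{lem:diff}) bounding $\mathrm{T}(\lambda/(N+1))-\mathrm{T}(\lambda/N)$ from below. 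You instead take $\epsilon_N=(\log N)^{-a}$, giving only a \emph{polylogarithmic} Lipschitz penalty (so the covariance step is trivially satisfied with room to spare), but you then need the density estimate $F_1'(2t_N)\asymp t_N^{1/2}\,p_N$ to show $\P\{\mathcal{A}_1(0)\in(t_N-\epsilon_N,t_N+\epsilon_N)\}=o(p_N)$ and hence $\E[e^{itZ_k^\pm}]=1+(e^{it}-1)\lambda/N+o(1/N)$. Both trade-offs are clean; your route avoids Lemma~\ref{lem:diff} and its attendant bookkeeping in the exponent comparison \eqref{epalpha}, while the paper's route avoids having to differentiate the Tracy--Widom tail asymptotic.
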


The function $\rm T$ (inverse of $s\mapsto 1-F_1(s)$) is introduced in \eqref{eq:Poisson}  so that the expectation of the sum in \eqref{eq:Poisson} is equal to 
the constant $\lambda$, as in the classical Poisson approximation theorem for independent Bernoulli random variables.
Theorem \ref{th:Poisson} states that when the points $x_{k, N}$, $k=1, \ldots, N$ are far apart (namely, satisfy the distance condition \eqref{distance}), then the identically distributed Bernoulli random variables given by the indicator functions in \eqref{eq:Poisson} are approximately independent and hence the sum of them will be approximated by the Poisson distribution.

Conus et al. \cite{CJK13} studied the spatial asymptotic behavior for the solution to the stochastic heat equation using a localization argument (see Chen \cite{Che16} for precise  spatial asymptotic behavior for the parabolic Anderson equation).
This localization argument is based on the mild formula for the stochastic heat equation and does not apply to the Airy$_1$ process. However, the association property provides a way to analyze the independence structure of the Airy$_1$ process in terms of the asymptotic behavior of the covariance (see the inequality \eqref{eq:prob} below). 
Combined with the asymptotic behavior of the tail probability of the maximum of the Airy$_1$ process (see Proposition \ref{prop:tail} below), we can study the precise asymptotic behavior of 
the maximum of the Airy$_1$ process.

\begin{theorem}\label{maximum}
          Almost surely,
          \begin{align}\label{eq:max}
          \lim_{N\to\infty} \frac{\max_{0\leq x\leq N}\mathcal{A}_1(x)}{(\log N)^{2/3}} = \frac12\left(\frac32\right)^{2/3}.
          \end{align}
\end{theorem}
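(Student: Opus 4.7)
\noindent\textbf{Proof plan for Theorem \ref{maximum}.} Write $c_\star := \tfrac{1}{2}(\tfrac{3}{2})^{2/3}$, so the claim is $(\log N)^{-2/3}\max_{0\leq x\leq N}\mathcal{A}_1(x)\to c_\star$ almost surely. The analytic input is the Tracy--Widom tail $1-F_1(s)=\exp(-(\tfrac{2}{3}+o(1))s^{3/2})$, which together with $\P\{\mathcal{A}_1(0)>s\}=1-F_1(2s)$ and the identity $c_\star^{3/2}=3/(4\sqrt{2})$ yields, for each fixed $r>0$,
\[
\P\{\mathcal{A}_1(0)>r\,c_\star(\log N)^{2/3}\}=N^{-r^{3/2}+o(1)}.
\]
I plan to prove matching upper and lower bounds on $(\log N)^{-2/3}\max_{0\leq x\leq N}\mathcal{A}_1(x)$ along the dyadic subsequence $N_n=2^n$, then extend to all $N$ by monotonicity of the maximum, and finally let $\epsilon\downarrow 0$.

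\medskip
\noindent\emph{Upper bound.} Fix $\epsilon>0$ and set $a_N=(1+\epsilon)c_\star(\log N)^{2/3}$. Proposition \ref{prop:tail} should supply a tail bound on $\max_{0\leq x\leq N}\mathcal{A}_1(x)$ that is, up to polynomial prefactors, of order $N\cdot\P\{\mathcal{A}_1(0)>a_N\}=N^{1-(1+\epsilon)^{3/2}+o(1)}$. Since $(1+\epsilon)^{3/2}>1$, this is summable in $n$ when $N=N_n=2^n$, and the first Borel--Cantelli lemma combined with the monotonicity $\max_{0\leq x\leq N}\mathcal{A}_1(x)\leq\max_{0\leq x\leq 2^{n+1}}\mathcal{A}_1(x)$ for $N\in[2^n,2^{n+1}]$ delivers $\limsup (\log N)^{-2/3}\max_{0\leq x\leq N}\mathcal{A}_1(x)\leq(1+\epsilon)c_\star$ almost surely.

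\medskip
\noindent\emph{Lower bound.} This is the main obstacle, because Theorem \ref{th:Poisson} is only a distributional statement; I must rerun a Borel--Cantelli argument using the near-independence that association provides. Fix $\epsilon,\delta>0$ small, set $b_N=(1-\epsilon)c_\star(\log N)^{2/3}$, and sample the process at $x_{k,N}=k\Delta_N$ for $k=1,\ldots,M_N$ with $\Delta_N=(1+\delta)(3\log N)^{1/3}$ and $M_N=\lfloor N/\Delta_N\rfloor\asymp N/(\log N)^{1/3}$, so that \eqref{distance} holds and $\max_{0\leq x\leq N}\mathcal{A}_1(x)\geq\max_{1\leq k\leq M_N}\mathcal{A}_1(x_{k,N})$. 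It suffices to show that $\P\{\max_{1\leq k\leq M_N}\mathcal{A}_1(x_{k,N})\leq b_N\}$ is summable along $N_n=2^n$; Borel--Cantelli, monotonicity, and $\epsilon\downarrow 0$ then conclude.

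\medskip
\noindent\emph{The key estimate.} To bound this probability I invoke the association-based inequality referenced in the introduction---the quantitative bound on the joint CDF of associated variables by the product of marginals plus a pairwise-covariance error, which is the same tool that drives Theorem \ref{th:Poisson}. Writing $p_N:=\P\{\mathcal{A}_1(0)>b_N\}=N^{-(1-\epsilon)^{3/2}+o(1)}$, the product term is $(1-p_N)^{M_N}\leq\exp(-M_N p_N)$ with $M_N p_N=N^{1-(1-\epsilon)^{3/2}+o(1)}$ a positive power of $N$, hence super-polynomially small. The covariance error uses the super-exponential decay of Basu et al.~\cite{BBF23}:
\[
\Cov(\mathcal{A}_1(x_{j,N}),\mathcal{A}_1(x_{k,N}))\leq\exp\bigl(-\tfrac{4}{3}(1+o(1))\Delta_N^3\bigr)=N^{-4(1+\delta)^3+o(1)}\qquad(j\neq k),
\]
so summing over the $O(M_N^2)=O(N^{2+o(1)})$ pairs gives $N^{2-4(1+\delta)^3+o(1)}$, a negative power of $N$. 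Both contributions are summable along $N_n=2^n$. The crux will be verifying that the association-based inequality converts raw covariances into covariances of the indicators $\mathbf{1}_{\{\mathcal{A}_1(x_{k,N})>b_N\}}$ with polynomial prefactors that are absorbed by the slack $\delta>0$ in the spacing $\Delta_N$.
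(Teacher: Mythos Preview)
Your plan matches the paper's proof: union bound plus Proposition~\ref{prop:tail} for the upper bound, and discretization plus the FKG-type inequality \eqref{FKG2} together with the covariance decay of \cite{BBF23} for the lower bound, both closed off by Borel--Cantelli along a geometric subsequence and monotonicity.

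Two minor divergences are worth noting. First, the paper spaces its grid points at distance $\sim N^{1-a}$ for a fixed $a\in(0,1)$ (taking only $\lfloor N^a\rfloor$ sample points) rather than at your logarithmic spacing $\Delta_N\sim(\log N)^{1/3}$; this makes the covariance sum decay like $N^{2a}\exp(-\tfrac13 N^{3(1-a)})$, faster than any power, and avoids any need to tune $\delta$. Second, regarding your flagged ``crux'': the inequality actually used is \eqref{eq:prob}, which bounds each indicator covariance by $K[\Cov(\mathcal{A}_1(x_j),\mathcal{A}_1(x_k))]^{1/3}$ rather than by the raw covariance. With your grid this turns the crude pair sum into $N^{2-(4/3)(1+\delta)^3+o(1)}$, which is summable once $(1+\delta)^3>3/2$; since only $\epsilon$ (not $\delta$) must tend to $0$, you may simply fix any $\delta>(3/2)^{1/3}-1$, or alternatively sum over $m=|j-k|$ to get $N^{1-(4/3)(1+\delta)^3+o(1)}$, which works for every $\delta>0$.
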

As for the Airy$_2$ process $\{\mathcal{A}_2(x): x\in \R\}$, we have the following. 
\begin{theorem}\label{th:A2}
          Almost surely,
          \begin{align}\label{eq:A2}
          \left(\frac14\right)^{2/3}\leq \liminf_{N\to\infty} \frac{\max_{0\leq x\leq N}\mathcal{A}_2(x)}{(\log N)^{2/3}} \leq  \limsup_{N\to\infty} \frac{\max_{0\leq x\leq N}\mathcal{A}_2(x)}{(\log N)^{2/3}} \leq \left(\frac34\right)^{2/3}.
          \end{align}
\end{theorem}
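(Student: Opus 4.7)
The plan is to establish \eqref{eq:A2} as two separate one-sided bounds, both anchored in the GUE Tracy--Widom upper tail
\begin{align*}
1 - F_2(s) \sim \frac{1}{16\pi s^{3/2}}\exp\!\left(-\tfrac{4}{3}s^{3/2}\right), \qquad s\to\infty,
\end{align*}
combined with stationarity of the Airy$_2$ process. Writing $s_N(\alpha) := \alpha(\log N)^{2/3}$, this gives $\P\{\mathcal{A}_2(x) > s_N(\alpha)\} = N^{-\frac{4}{3}\alpha^{3/2} + o(1)}$, which balances at $1/N$ when $\alpha = (3/4)^{2/3}$ and at $N^{-1/3}$ when $\alpha = (1/4)^{2/3}$; these are precisely the two thresholds in \eqref{eq:A2}.

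For the upper bound $\limsup \leq (3/4)^{2/3}$, I would adapt the discretisation argument used in the proof of Theorem \ref{maximum}. Fix $\alpha > (3/4)^{2/3}$, work along the geometric subsequence $N_k = \e^k$, and cover $[0, N_k]$ by an equispaced grid of spacing $\delta_k = (\log N_k)^{-c}$ for a small $c > 0$. The $(1/2^-)$-H\"older regularity of the Airy$_2$ sample paths (an analogue for Airy$_2$ of the statement in \cite{QuR13}) reduces the continuous maximum to the discrete one up to oscillation $o((\log N_k)^{2/3})$, so a union bound yields
\begin{align*}
\P\bigl\{\max_{[0,N_k]}\mathcal{A}_2 > s_{N_k}(\alpha)\bigr\} \leq \frac{N_k}{\delta_k}\,\P\bigl\{\mathcal{A}_2(0) > s_{N_k}(\alpha')\bigr\} + (\text{modulus term})
\end{align*}
for some $\alpha' \in ((3/4)^{2/3}, \alpha)$. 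Since $\tfrac{4}{3}(\alpha')^{3/2} > 1$, the right-hand side is summable in $k$, and Borel--Cantelli together with $\log N_{k+1}/\log N_k \to 1$ extends the bound to all real $N$.

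For the lower bound $\liminf \geq (1/4)^{2/3}$, I would fix $\alpha < (1/4)^{2/3}$ and run a second moment argument at a sparse grid. Place $x_1, \ldots, x_{k_N}$ in $[0, N]$ with spacing $\Delta_N$, and set $S_N := \sum_{i=1}^{k_N} \bm{1}_{\{\mathcal{A}_2(x_i) > s_N(\alpha)\}}$. Then $\E[S_N] = k_N\cdot N^{-\frac{4}{3}\alpha^{3/2} + o(1)}$ diverges for suitable $\Delta_N$, while $\Var(S_N)$ is controlled via the Airy$_2$ two-point covariance decay $\Cov(\mathcal{A}_2(x), \mathcal{A}_2(0)) \sim c/x^2$ and a Newman-type inequality analogous to \eqref{Newman3}, which applies once one establishes association of the Airy$_2$ process by the same route as Theorem \ref{th:association}. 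Optimising $\Delta_N$ in $\P(S_N > 0) \geq \E[S_N]^2/\E[S_N^2]$ produces probabilities of failure summable along $N_k = 2^k$, and Borel--Cantelli combined with monotonicity of the running maximum closes the argument.

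The hard step is the second moment estimate in the lower bound. In the $\mathcal{A}_1$ case the super-exponential covariance decay lets $\Delta_N$ be merely polylogarithmic and the decoupling is essentially complete; but the polynomial $x^{-2}$ decay for $\mathcal{A}_2$ forces $\Delta_N$ to be a genuine power of $N$, and the resulting trade-off between $\E[S_N]$ and the covariance-induced cross terms is precisely what accounts for the factor-of-three gap between $(1/4)^{2/3}$ and $(3/4)^{2/3}$ in \eqref{eq:A2}.
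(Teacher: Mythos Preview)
Your overall strategy is sound and your diagnosis of why the two constants differ is exactly right, but each half of the argument needs comment.

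\textbf{Upper bound: there is a gap.} The step ``$(1/2^-)$-H\"older regularity \ldots reduces the continuous maximum to the discrete one up to oscillation $o((\log N_k)^{2/3})$'' does not follow as stated. Almost-sure H\"older continuity is a statement on fixed compacts; the H\"older constant over $[0,N_k]$ can grow with $N_k$, and you have supplied no tail bound on the local oscillation that is uniform in the location and survives a union bound over $N_k/\delta_k$ subintervals. The paper closes this with the explicit estimate
\[
\sup_{x\in\R}\,\mathrm P\!\left(\sup_{0\le y\le 2}|\mathcal A_2(x+y)-\mathcal A_2(x)|\ge s\right)\le \e^{-s^2/16}
\]
from \cite[Corollary~1.3]{CHH23}, after which a \emph{coarse} grid of spacing $2$ already suffices and the modulus term becomes $N\,\e^{-c(\log N)^{4/3}}$. (Incidentally, the paper's $\mathcal A_1$ upper bound does not discretise at all: it uses the Fredholm-determinant tail estimate of Proposition~\ref{prop:tail} for $\max_{[0,1]}\mathcal A_1$ directly, so there is no grid argument there to ``adapt''.)

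\textbf{Lower bound: different route, same endpoint.} Your second-moment/Paley--Zygmund argument on $S_N$ is a legitimate alternative, but the paper proceeds more directly: it bounds $\mathrm P\{\max_{[0,N]}\mathcal A_2\le(\beta\log N)^{2/3}\}$ by the product of the marginals plus correction terms, via the FKG-type inequality \eqref{FKG2} and the bivariate estimate \eqref{eq:prob2}. Both routes funnel through the bound $[\Cov(\mathcal A_2(x_i),\mathcal A_2(x_j))]^{1/3}\asymp|x_i-x_j|^{-2/3}$ and both deliver the constraint $\beta<1/4$. Two small corrections to your sketch: the decoupling tool you actually need for indicator covariances is \eqref{eq:prob2} (which exploits the bounded GUE Tracy--Widom density), not \eqref{Newman3}, which is a characteristic-function inequality for Lipschitz test functions; and in your scheme the critical ratio $(\text{cross terms})/\E[S_N]^2\sim N^{-2/3}p_N^{-2}$ is independent of the spacing $\Delta_N$, so there is nothing to optimise --- any bounded spacing already works.
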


The paper is organized as follows.  After introducing some preliminaries in  Section \ref{sec:pre}, we
first prove Theorem \ref{th:association} in Section \ref{sec:as}.  Then we establish the ergodicity and CLT in Section \ref{sec:ergodicityCLT}. The Poisson approximation (Theorem \ref{th:Poisson}) is established in Section \ref{Sec:Poisson}.
Finally, we prove Theorems \ref{maximum} and \ref{th:A2} in Section \ref{sec:max}.

Let us close the introduction with a brief description of the notation of this paper. For a Lipschitz continuous
function $g:\R^m\mapsto \R$,  define 
\[
	\lip(g) := \sup_{a, b\in \R^m, a\neq b}
	\frac{|g(b)-g(a)|}{\|b-a\|}
\]
where $\|\cdot\|$ denotes the Euclidean norm on $\R^m$, and for $j=1, \ldots, m$, we define
\begin{align*}
\lip_j(g):= \sup_{ a_j\neq b_j} \frac{|g(a)-g(b)|}{|a_j-b_j|},
\end{align*}
where $a_j$ and $b_j$ are the $j$-th coordinates of $a$  and $b$ respectively.

\section{Preliminaries}
\label{sec:pre}

In this section, we will first recall some basic facts on associated random variables. We refer to \cite{PR12} for more details on 
associated sequences. 

Recall the definition of association in \eqref{E:assoc}.	
Esary et al.
\cite{EPW67} showed that the random vector $X$ is associated if and only if
\eqref{E:assoc} holds for all bounded and continuous functions $h_1,h_2$ that are nondecreasing coordinatewise. Using approximation and dominated convergence theorem, in order to verify that $X$ is associated, it suffices to check that \eqref{E:assoc} holds for all  $h_1,h_2$ which are smooth, nondecreasing coordinatewise and have bounded partial derivatives. 

Another concept of dependence is positive quadrant dependence. According to  \cite{Leh66}, a pair of random variables $X$, $Y$ are said to be positive quadrant dependent if
\begin{align*}
\mathrm{P}\{X\leq x, Y\leq y\}- \mathrm{P}\{X\leq x\}\mathrm{P}\{Y\leq y\}\geq 0
\end{align*}
for all $x, y\in \R$. 
For an associated random vector $(X, Y)$, it is clear that the random variables $X$ and $Y$ are positive quadrant dependent. Therefore, according to Newman \cite[Lemma 3]{New80},  for an associated random vector $(X, Y)$ such that both $X$
and $Y$ have finite variance, we have
\begin{align}\label{Newman}
\left|\Cov(f(X)\,, g(Y))\right| \leq \|f'\|_{\infty}\|g'\|_{\infty}\Cov(X\,, Y),
\end{align}
for $C^1$ complex valued functions $f$, $g$ on $\R$ with $f'$, $g'$ bounded, where $\|\cdot\|_{\infty}$ denotes the sup norm on $\R$. Inequality \eqref{Newman} is known as Newman's inequality.  Bulinski \cite{Bul96} generalized \eqref{Newman} to 
Lipschitz continuous functions (see also \cite[Theorem 6.2.4]{PR12}). Moreover, according to Bulinski and Shabanovich \cite{BuS98} (see also \cite[Theorem 6.2.6]{PR12}),
letting $(Y_1, \ldots, Y_m)$ be an associated random vector such that each component has finite variance, then we have
for Lipschitz continuous functions $f, g: \R^m\mapsto \R$, 
\begin{align}\label{Newman2}
          \left|\Cov(f(Y_1, \ldots, Y_m)\,, g(Y_1, \ldots, Y_m))\right|\leq \sum_{j=1}^m\sum_{\ell=1}^m
           \lip_j(f)\lip_\ell(g) \Cov(Y_j\,, Y_\ell),
\end{align}
where $\lip_j(f), \lip_\ell(g)$ are defined at the end of Section \ref{sec:int}. Inequality \eqref{Newman2} generalizes \cite[(12)]{New80}, which is a
 multivariate version of \eqref{Newman}.

Another consequence of Newman's inequality \eqref{Newman} is the following inequality for the characteristic functions of associated random vector. Indeed, by \cite[Theorem 1]{New80}, for an associated random vector $(Y_1, \ldots, Y_m)$ such that each component has finite variance, we have for any real numbers $r_1, \ldots, r_m$,
\begin{align}\label{Newman3}
\bigg|\E\left[\e^{{\rm i}\sum_{j=1}^m r_jY_j}\right]
- \prod_{j=1}^m \E\left[\e^{{\rm i}r_jY_j}\right] 
\bigg| \leq \frac12\sum_{\substack{1\leq j, \ell\leq m\\j \neq \ell}} |r_j||r_\ell| \Cov(Y_j\,, Y_\ell).
\end{align}
The study of central limit theorem for associated random sequences essentially relies on the above inequality; see \cite[Theorem 2]{New80}. We will see in Section \ref{Sec:Poisson} that inequality \eqref{Newman3} also plays a crucial role in the study of Poisson approximation for the Airy$_1$ process. In order to prove Theorems \ref{maximum} and \ref{th:A2}, we need the following 
probability inequalities for associated random vectors. 

\begin{lemma}
          Let $(Y_1, \ldots, Y_m)$ be an associated random vector. Let $y_1, \ldots, y_m$ be real numbers. 
           For subsets $A, B$ of $\{1, \ldots, m\}$, 
          \begin{align}\label{FKG1}
          &\mathrm{P}\{Y_j\leq y_j, j\in A\cup B\} - \mathrm{P}\{Y_j\leq y_j, j\in A\} \mathrm{P}\{Y_k\leq y_k, k\in B\}\nonumber\\
          &\qquad \qquad \qquad \qquad\qquad\qquad \leq \sum_{j\in A}\sum_{k\in B}
           \left(\mathrm{P}\{Y_j\leq y_j, Y_k\leq y_k\} - \mathrm{P}\{Y_j\leq y_j\}\mathrm{P}\{Y_k\leq y_k\}\right).
          \end{align}
          As a consequence, 
          \begin{align}\label{FKG2}
          &\mathrm{P}\{Y_j\leq y_j, 1\leq j\leq m\} - \prod_{j=1}^m\mathrm{P}\{Y_j\leq y_j\}  \nonumber\\
            &\qquad \qquad \qquad \qquad\qquad \leq\sum_{1\leq j<k\leq m} 
          \left(\mathrm{P}\{Y_j\leq y_j, Y_k\leq y_k\} - \mathrm{P}\{Y_j\leq y_j\}\mathrm{P}\{Y_k\leq y_k\}\right).
          \end{align}
\end{lemma}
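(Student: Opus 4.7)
The plan is to treat \eqref{FKG1} as the main statement and deduce \eqref{FKG2} as an afterthought.  Set $U_j:=\mathbf{1}_{\{Y_j\leq y_j\}}$ and $V_j:=1-U_j$; since each $V_j$ is a nondecreasing function of $Y_j$, the vector $(V_1,\ldots,V_m)$ is associated and any product $\prod_{k\in S}V_k$ is a nondecreasing function of $(Y_1,\ldots,Y_m)$.  Because $U_j^2=U_j$, we have $\prod_{j\in A}U_j\cdot\prod_{k\in B}U_k=\prod_{j\in A\cup B}U_j$, so the left-hand side of \eqref{FKG1} equals $\Cov\bigl(\prod_{j\in A}U_j,\prod_{k\in B}U_k\bigr)$; this identity is used throughout.

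I would prove \eqref{FKG1} by induction on $|B|$, with the base case $|B|=1$ handled by a nested induction on $|A|$.  For the base case, write $B=\{k^*\}$ and $A=A'\cup\{j^*\}$ with $j^*\notin A'$; substituting $U_{j^*}=1-V_{j^*}$ gives
\[
	\Cov\bigl(\textstyle\prod_{j\in A}U_j,U_{k^*}\bigr)
	=\Cov\bigl(\prod_{j\in A'}U_j,U_{k^*}\bigr)
	+\Cov\bigl(\prod_{j\in A'}U_j\cdot V_{j^*},V_{k^*}\bigr).
\]
By the inner induction the first summand is at most $\sum_{j\in A'}\Cov(U_j,U_{k^*})$; it thus suffices to show the second summand is at most $\Cov(V_{j^*},V_{k^*})=\Cov(U_{j^*},U_{k^*})$, equivalently $\Cov\bigl(V_{j^*}(1-\prod_{j\in A'}U_j),V_{k^*}\bigr)\geq 0$.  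The crucial observation is that for $\{0,1\}$-valued factors $1-\prod_{j\in A'}U_j=\max_{j\in A'}V_j$, so $V_{j^*}(1-\prod_{A'}U_j)=V_{j^*}\max_{A'}V$ is a nondecreasing function of $(Y_i)_i$; association then yields the nonnegativity.

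For the outer induction on $|B|$, pick $k^*\in B$, set $B':=B\setminus\{k^*\}$, and substitute $U_{k^*}=1-V_{k^*}$ to obtain
\[
	\Cov\bigl(\textstyle\prod_A U,\prod_B U\bigr)
	=\Cov\bigl(\prod_A U,\prod_{B'}U\bigr)
	-\Cov\bigl(\prod_A U,V_{k^*}\prod_{B'}U\bigr).
\]
The induction hypothesis gives $\Cov(\prod_A U,\prod_{B'}U)\leq\sum_{j\in A,\,k\in B'}\Cov(U_j,U_k)$.  Writing $V_{k^*}\prod_{B'}U=V_{k^*}-V_{k^*}(1-\prod_{B'}U)$ splits the remaining covariance, and since $V_{k^*}(1-\prod_{B'}U)=V_{k^*}\max_{B'}V$ is nondecreasing in the $Y_i$'s while $\prod_A U$ is nonincreasing, association gives $\Cov(\prod_A U,V_{k^*}(1-\prod_{B'}U))\leq 0$.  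Consequently $-\Cov(\prod_A U,V_{k^*}\prod_{B'}U)\leq -\Cov(\prod_A U,V_{k^*})=\Cov(\prod_A U,U_{k^*})$, which the $|B|=1$ base case bounds by $\sum_{j\in A}\Cov(U_j,U_{k^*})$, closing the induction.  Finally \eqref{FKG2} follows from \eqref{FKG1} by induction on $m$ via the telescoping split $P_m-\prod_{j\leq m}p_j=[P_m-p_m P_{m-1}]+p_m[P_{m-1}-\prod_{j\leq m-1}p_j]$, where $P_k=\mathrm{P}\{Y_j\leq y_j,\,1\leq j\leq k\}$ and $p_j=\mathrm{P}\{Y_j\leq y_j\}$: the first bracket is bounded by \eqref{FKG1} with $A=\{1,\ldots,m-1\}$, $B=\{m\}$, and the second by the inductive hypothesis (using $p_m\leq 1$).

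The main obstacle is bookkeeping monotonicity directions: $\prod_S U_j$ is nonincreasing in each $Y_j$ while the $V_j$'s and the complementary indicators $1-\prod_S U_j=\max_S V$ are nondecreasing, so every covariance produced by expansion must be given the correct monotonicity interpretation before association is invoked with the right sign.  The collapse $U_j^2=U_j$ (equivalently $U_jV_j=0$) is used repeatedly to recast mixed products as indicators of monotone events, which is exactly what makes association applicable after each telescoping step.
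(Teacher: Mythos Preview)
Your proof is correct and follows essentially the same route as the paper. For \eqref{FKG1} the paper simply defines $Z_j=\mathbf{1}_{\{Y_j\le y_j\}}$, notes that $(Z_1,\ldots,Z_m)$ is associated, and defers the details to \cite[Theorem~1.2.2]{PR12} and \cite[Lemma~1]{Leb72}; your indicator telescoping using $U_{j}=1-V_{j}$ together with the key identity $1-\prod_{S}U_j=\max_{S}V_j$ is precisely the Lebowitz-type argument those references contain, so you have written out what the paper cites. Your derivation of \eqref{FKG2} from \eqref{FKG1} by induction on $m$ via the split $P_m-\prod p_j=[P_m-p_mP_{m-1}]+p_m[P_{m-1}-\prod_{j\le m-1}p_j]$ is identical to the paper's proof.
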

\begin{proof}
          The proof of \eqref{FKG1} is similar to that of \cite[Theorem 1.2.2]{PR12} (see also \cite[Lemma 1]{Leb72}). We define
          the random variables $Z_j= \bm{1}_{\{Y_j\leq y_j\}}$, for $j=1, \ldots, m$. It is clear that $(Z_1, \ldots, Z_m)$ 
          is associated. Then we can follow along the same lines as in the proof of \cite[Theorem 1.2.2]{PR12} to
           conclude \eqref{FKG1}.  
           
           Clearly, the inequality \eqref{FKG2} holds with $m=2$. Assume that the inequality \eqref{FKG2} is true for $m-1$. 
           Then for integer $m$,
           \begin{align*}
           &\mathrm{P}\{Y_j\leq y_j, 1\leq j\leq m\} - \prod_{j=1}^m\mathrm{P}\{Y_j\leq y_j\}\\
           &\qquad\qquad=            \mathrm{P}\{Y_j\leq y_j, 1\leq j\leq m\} -
           \mathrm{P}\{Y_j\leq y_j, 1\leq j\leq m-1\}\mathrm{P}\{Y_m\leq y_m\}\\
           &\qquad\qquad\quad + \mathrm{P}\{Y_j\leq y_j, 1\leq j\leq m-1\}\mathrm{P}\{Y_m\leq y_m\}
- \prod_{j=1}^m\mathrm{P}\{Y_j\leq y_j\}\\
&\qquad\qquad \leq \sum_{j=1}^{m-1}\left(\mathrm{P}\{Y_i\leq y_j, Y_m\leq y_m\}- \mathrm{P}\{Y_j\leq y_j\}\mathrm{P}\{Y_m\leq y_m\}\right)\\
&\qquad\qquad \quad + \sum_{1\leq j<k\leq m-1} \left(\mathrm{P}\{Y_j\leq y_j, Y_k\leq y_k\} - \mathrm{P}\{Y_j\leq y_j\}\mathrm{P}\{Y_k\leq y_k\}\right)\\
&\qquad\qquad= \sum_{1\leq j<k\leq m} \left(\mathrm{P}\{Y_j\leq y_j, Y_k\leq y_k\} - \mathrm{P}\{Y_j\leq y_j\}\mathrm{P}\{Y_k\leq y_k\}\right),
           \end{align*} 
           where in the inequality, we have used \eqref{FKG1} and the assumption for $m-1$. Hence, we prove the inequality 
           \eqref{FKG2} by induction.        
\end{proof}

We next introduce some facts on the  GOE and GUE Tracy-Widom distributions, denoted by $F_1$ and $F_2 $ respectively. Let $F_1'$ and $F_2'$ be the derivatives of $F_1$ and $F_2$ respectively, which correspond to the probability density functions of the GOE and GUE Tracy-Widom distributions. Recall from \cite{BBD08} (see also \cite{TW94, TW96}) that $F_1$ and $F_2$ can be written as
\begin{align}\label{TW1}
F_1(s)=F(s)E(s),\qquad F_2(s)=F(s)^2,
\end{align}
where
\begin{align}\label{TW2}
F(s)=\exp\left(-\frac12\int_s^{\infty}R(r)\mathrm{d} r\right), \quad E(s)=\exp\left(-\frac12\int_{s}^{\infty}q(r)\mathrm{d} r\right).
\end{align}
Here the (real) function $q$ is the solution to the Painlev\'e II equation
\begin{align*}
q''=2q^3+sq,
\end{align*}
that satisfies the boundary condition
\begin{align}\label{TW3}
q(s)\sim \frac{1}{2\sqrt{\pi}s^{1/4}}\e^{-\frac23s^{3/2}}, \quad s\to+\infty.
\end{align}
The function $R$ is defined as
$
R(s)=\int_s^\infty(q(r))^2\mathrm{d} r. 
$
Taking derivative in \eqref{TW1} and using the formulas in \eqref{TW2},  we have
\begin{align}\label{TW4}
F_1'(s)= \frac12F_1(s)\left(R(s)+q(s)\right).
\end{align}
According to the asymptotics of the functions $q, R$ and $F_1$ as $s\to -\infty$ (see \cite[(11), (12), (18)]{BBD08}), we derive from \eqref{TW4} that the probability density function $F'_1$ is continuous and bounded. 
Similarly, we have
$
F_2'(s)=F_2(s)R(s),
$
and use the  asymptotics of the functions $R$ and $F_2$ as $s\to -\infty$ (see \cite[(12), (16)]{BBD08}), we deduce that 
the GUE Tracy-Widom distribution also has a bounded and continuous probability density function. 

Moreover, in light of \eqref{TW3}, we deduce that for all sufficiently large $s$,
\begin{align*}
R(s)\leq \int_s^{\infty} \frac{1}{\sqrt{\pi}r^{1/4}}\e^{-\frac23r^{3/2}} q(r)\mathrm{d} r \leq \e^{-\frac23s^{3/2}}.
\end{align*}
Hence, we combine with \eqref{TW4} and \eqref{TW3} to obtain that there exists a constant $C_1>0$ such that
\begin{align}\label{TW5}
F_1'(s) \leq \e^{-\frac23s^{3/2}}, \quad \text{for all $s\geq C_1$}.
\end{align}
Furthermore,  we see from \cite[Theorem 1]{DuV13} that for fixed $\epsilon\in (0, \frac23)$, there exists a positive constant $C_2$ depending on $\epsilon$ such that
\begin{align} \label{TWtail}
\e^{-(\frac23+\epsilon)s^{3/2}}\leq 1-F_1(s) \leq \e^{-(\frac23-\epsilon)s^{3/2}}, \quad \text{for all $s\geq C_2$}.
\end{align}
See also \cite[(25), (26)]{BBD08} for the precise upper tail of GOE Tracy-Widom distribution.  

Recall that $\rm T$ denotes the inverse function of $s\mapsto 1-F_1(s)$. The following lemma will be used when we prove Theorem \ref{th:Poisson} in Section \ref{Sec:Poisson}.

\begin{lemma}\label{lem:diff}
          Fix $\lambda>0$ and $\epsilon\in (0, \frac23)$. There exist an integer $N_0$ and a positive constant $C$
           both depending on 
          $\lambda$ and $\epsilon$ such that for all $z\geq N_0$
          \begin{align}\label{diff}
          {\rm T}(\lambda/z)-{\rm T}(\lambda/(z-1)) \geq Cz^{-\frac{2+6\epsilon}{2+3\epsilon}}.
          \end{align}
\end{lemma}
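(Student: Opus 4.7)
\medskip
\noindent\textbf{Proof proposal.} The plan is to apply the mean value theorem to turn the difference $\mathrm T(\lambda/z)-\mathrm T(\lambda/(z-1))$ into a quotient that can be bounded using the two asymptotic estimates already recorded in the preliminaries: the upper bound \eqref{TW5} on the density $F_1'$, and the lower tail bound on $1-F_1$ from \eqref{TWtail}. Concretely, set $g(s):=1-F_1(s)$, so that $g(\mathrm T(u))=u$ and $g'=-F_1'$. Applying MVT to the composition $g\circ \mathrm T$ at the points $\lambda/z$ and $\lambda/(z-1)$ (and noting that $\mathrm T$ is strictly decreasing, so $\mathrm T(\lambda/z)>\mathrm T(\lambda/(z-1))$) yields
\begin{equation*}
\mathrm T(\lambda/z)-\mathrm T(\lambda/(z-1))
\;=\;\frac{\lambda/(z-1)-\lambda/z}{F_1'(\eta)}
\;=\;\frac{\lambda}{z(z-1)\,F_1'(\eta)}
\end{equation*}
for some intermediate point $\eta\in[\mathrm T(\lambda/(z-1)),\mathrm T(\lambda/z)]$. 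Everything reduces to an upper bound on $F_1'(\eta)$, and for that a lower bound on $\eta$ suffices.

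Next I would obtain a lower bound on $\eta$ by lower-bounding the smaller endpoint $\mathrm T(\lambda/(z-1))$. The lower tail estimate in \eqref{TWtail}, $1-F_1(s)\ge \mathrm e^{-(2/3+\epsilon)s^{3/2}}$ for $s\ge C_2$, gives, upon inversion,
\begin{equation*}
\mathrm T(u)\;\ge\;\left(\frac{\log(1/u)}{2/3+\epsilon}\right)^{2/3}
\quad\text{for all sufficiently small }u>0.
\end{equation*}
Applied at $u=\lambda/(z-1)$ (so $\log(1/u)=\log((z-1)/\lambda)$), this produces a lower bound on $\eta^{3/2}$ of the form $\eta^{3/2}\ge (2/3+\epsilon)^{-1}\log((z-1)/\lambda)$ once $z$ is large enough that $\mathrm T(\lambda/(z-1))\ge C_1$.

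Feeding this into \eqref{TW5} gives
\begin{equation*}
F_1'(\eta)\;\le\;\mathrm e^{-\frac{2}{3}\eta^{3/2}}
\;\le\;\left(\frac{z-1}{\lambda}\right)^{-\frac{2/3}{2/3+\epsilon}}
\;=\;\left(\frac{z-1}{\lambda}\right)^{-\frac{2}{2+3\epsilon}},
\end{equation*}
and substituting back yields
\begin{equation*}
\mathrm T(\lambda/z)-\mathrm T(\lambda/(z-1))
\;\ge\;\frac{\lambda}{z(z-1)}\left(\frac{z-1}{\lambda}\right)^{\frac{2}{2+3\epsilon}}
\;=\;\lambda^{1-\frac{2}{2+3\epsilon}}\,\frac{(z-1)^{\frac{2}{2+3\epsilon}-1}}{z}.
\end{equation*}
Since $\frac{2}{2+3\epsilon}-2=-\frac{2+6\epsilon}{2+3\epsilon}$, for all $z$ beyond some $N_0=N_0(\lambda,\epsilon)$ the right-hand side is bounded below by $C z^{-(2+6\epsilon)/(2+3\epsilon)}$ for a suitable $C=C(\lambda,\epsilon)>0$, which is exactly \eqref{diff}.

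There is no real obstacle here beyond bookkeeping: the only delicate point is to use the two tail bounds in the correct directions. One needs the \emph{lower} bound on $1-F_1$ (not the upper one) in order to produce a \emph{lower} bound on $\mathrm T$, and hence an \emph{upper} bound on $F_1'(\eta)$; mixing these up would change the sign of $\epsilon$ in the final exponent. The choice $N_0$ must simultaneously ensure that $s\ge C_1$ and $s\ge C_2$ hold at $s=\mathrm T(\lambda/(z-1))$ and that $z-1\ge \lambda$, but these are each one-line estimates since $\mathrm T(\lambda/z)\to\infty$ as $z\to\infty$.
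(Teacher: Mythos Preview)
Your proposal is correct and follows essentially the same route as the paper: both arguments combine the density bound \eqref{TW5} with the lower tail bound in \eqref{TWtail} and the mean value theorem, the only cosmetic difference being that the paper first computes $\frac{\mathrm d}{\mathrm d z}\mathrm T(\lambda/z)=\lambda/(z^2 F_1'(\mathrm T(\lambda/z)))$ and then applies MVT, whereas you apply MVT first to obtain $F_1'(\eta)$ at an intermediate $\eta$ and then bound it. One small wording fix: what you actually invoke is MVT for $g=1-F_1$ between the points $\mathrm T(\lambda/(z-1))$ and $\mathrm T(\lambda/z)$ (equivalently, MVT for $\mathrm T=g^{-1}$ between $\lambda/z$ and $\lambda/(z-1)$), not for the composition $g\circ\mathrm T$, which is the identity.
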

\begin{proof}
          Taking derivative on both sides of the following identity
          \begin{align*}
          \int_{{\rm T}(\lambda/z)}^{\infty}F'_1(s)\mathrm{d} s =\frac{\lambda}{z}, \quad z>\lambda,
          \end{align*}
          we observe that for all $z>\lambda$
          \begin{align}\label{density}
          \frac{\rm d}{\mathrm{d} z}{\rm T}(\lambda/z)=\frac{\lambda}{z^2F_1'({\rm T}(\lambda/z))}.
          \end{align}
          We can choose a sufficiently large integer $N_0$ depending on $\lambda>0$ and $\epsilon\in (0, \frac23)$ such that 
          \begin{align*}
          {\rm T}(\lambda/z) > C_1\vee C_2, \quad \text{for all $z\geq N_0-1$},
          \end{align*}
          where $C_1, C_2$ are the constants in \eqref{TW5} and \eqref{TWtail} respectively.  Hence, 
          by \eqref{density} and \eqref{TW5}, we derive that for all $z\geq N_0-1$,
          \begin{align*}
          \frac{\rm d}{\mathrm{d} z}{\rm T}(\lambda/z)\geq \frac{\lambda}{z^2}\e^{\frac23([{\rm T}(\lambda/z)]^{3/2})}
          \geq  \frac{\lambda}{z^2}\left(\frac{z}{\lambda}\right)^{\frac{\frac23}{\frac23+\epsilon}} 
          =Cz^{-\frac{2+6\epsilon}{2+3\epsilon}},
          \end{align*}
          where the second inequality holds by the first inequality in \eqref{TWtail}. 
          Finally, the above estimate and the mean-value theorem imply \eqref{diff}.       
\end{proof}

\section{Association}\label{sec:as}

We prove the association property of the Airy$_1$ process in this section.  We first consider the stochastic heat equation
\begin{align}\label{SHE}
\partial_tZ(t\,,x) = \delta \partial_x^2Z(t\,,x) + \frac14\delta^{1/2}Z(t\,,x) \xi(t\,,x), \quad t>0, x\in\R
\end{align}
subject to flat initial data $Z(0)\equiv 1$ or narrow wedge initial data $Z(0)=\delta_0$. In the above equation,
$\delta$ is a positive constant and $\xi$ denotes space-time white noise. 
Chen et al. \cite[Theorem A.4]{CKNP23} have established the association property for stochastic heat equation with general diffusion coefficient and initial data, by showing that the Malliavin derivative of the solution is non-negative almost surely (see \cite[(A.4)]{CKNP23}) and then applying the Clark-Ocone formula and Ito's isometry to show that the covariance 
\begin{align*}
\Cov(h_1(Z(t_1\,, x_1), \ldots, Z(t_m\,, x_m))\,, h_2(Z(t_1\,, x_1), \ldots, Z(t_m\,, x_m)))
\end{align*}
is nonnegative for all  $h_1,h_2$ which are smooth, nondecreasing coordinatewise and have bounded partial derivatives. 
Although the constants (depending on $\delta$) in equation \eqref{SHE} are different from those in equation (1.1) of \cite{CKNP23}, Theorem A.4 of 
\cite{CKNP23} ensures that the solution to \eqref{SHE} is associated.

\begin{theorem}[{{\cite[Theorem A.4]{CKNP23}}}]\label{th:SHE:ass}
          For every fixed $\delta>0$, the solution $\{Z(t\,,x): (t, x)\in \R_+\times \R\}$ to \eqref{SHE} 
          with flat or narrow wedge initial condition is associated. 
\end{theorem}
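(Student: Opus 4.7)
The plan is to reduce the association statement to a single positivity fact about the Malliavin derivative of $Z(t,x)$, exactly as in \cite[Theorem A.4]{CKNP23}, only checking that the different prefactors in \eqref{SHE} (a $\delta\partial_x^2$ drift and a multiplicative noise coefficient $\tfrac14\delta^{1/2}$) do not affect the argument. By the reduction recalled in Section \ref{sec:pre}, it suffices to show $\mathrm{Cov}(h_1(\mathbf{Z}),h_2(\mathbf{Z}))\geq 0$ for $\mathbf{Z}=(Z(t_1,x_1),\ldots,Z(t_m,x_m))$ and smooth coordinatewise nondecreasing $h_1,h_2$ with bounded partial derivatives. My route is Clark--Ocone plus It\^o's isometry:
\begin{equation*}
\mathrm{Cov}(h_1(\mathbf Z),h_2(\mathbf Z))
=\int_0^\infty\!\!\int_\R \mathrm E\!\left[\mathrm E[D_{s,y}h_1(\mathbf Z)\mid\mathcal F_s]\,\mathrm E[D_{s,y}h_2(\mathbf Z)\mid\mathcal F_s]\right]\mathrm d y\,\mathrm d s,
\end{equation*}
so positivity will follow once we know $D_{s,y}Z(t_i,x_i)\geq 0$ a.s.\ for each $i$, since then each factor inside the expectation is nonnegative by the chain rule $D_{s,y}h_k(\mathbf Z)=\sum_i\partial_ih_k(\mathbf Z)\,D_{s,y}Z(t_i,x_i)$ and monotonicity of $h_k$.

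Second, I verify nonnegativity of the Malliavin derivative. Writing $G^\delta_t(x)$ for the heat kernel of $\delta\partial_x^2$, the mild form of \eqref{SHE} is
\begin{equation*}
Z(t,x)=(G^\delta_t\!*Z(0))(x)+\tfrac14\delta^{1/2}\!\int_0^t\!\!\int_\R G^\delta_{t-r}(x-z)Z(r,z)\,\xi(\mathrm d r,\mathrm d z),
\end{equation*}
so for $s<t$ the Malliavin derivative $u(r,z):=D_{s,y}Z(r,z)$ solves the linear SPDE
\begin{equation*}
u(r,z)=\tfrac14\delta^{1/2}G^\delta_{r-s}(z-y)Z(s,y)+\tfrac14\delta^{1/2}\!\int_s^r\!\!\int_\R G^\delta_{r-\tau}(z-w)u(\tau,w)\,\xi(\mathrm d\tau,\mathrm d w).
\end{equation*}
The inhomogeneous term is nonnegative because $Z\geq 0$ a.s.\ for flat and narrow-wedge data (strict positivity of $Z$ under both initial conditions is classical via the Feynman--Kac representation). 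Pathwise nonnegativity of $u$ then follows from either a Picard iteration comparison scheme or, equivalently, the multiplicative Feynman--Kac formula that represents $u(r,z)$ as $\tfrac14\delta^{1/2}G^\delta_{r-s}(z-y)Z(s,y)$ times a nonnegative stochastic exponential driven by the noise on $(s,r]$. This is the one step where the specific coefficients $\delta$ and $\tfrac14\delta^{1/2}$ appear, but they enter only as positive multiplicative constants and thus do not interfere with the sign.

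Third, I combine the two ingredients. The Clark--Ocone formula applies to the random variables $h_k(\mathbf Z)$ because the bounded partial derivatives of $h_k$ combined with moment bounds on $Z(t,x)$ give $h_k(\mathbf Z)\in\mathbb D^{1,2}$. Integrating the product of two nonnegative conditional expectations and invoking It\^o's isometry yields $\mathrm{Cov}(h_1(\mathbf Z),h_2(\mathbf Z))\geq 0$, which is the desired association for the vector $\mathbf Z$; since $(t_1,x_1),\ldots,(t_m,x_m)$ were arbitrary, the whole random field $\{Z(t,x)\}$ is associated.

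The main technical obstacle is the narrow-wedge case $Z(0)=\delta_0$, where $(G^\delta_t\!*\delta_0)(x)=G^\delta_t(x)$ is smooth but degenerates at $t=0$; to apply Clark--Ocone one must first establish the requisite Malliavin differentiability and the nonnegativity of the Malliavin derivative uniformly in a neighbourhood of $t=0$. The standard remedy, which I would borrow verbatim from \cite{CKNP23}, is to approximate $\delta_0$ by smooth nonnegative initial data $\phi_\varepsilon\to\delta_0$, run the argument for $\phi_\varepsilon$, and then pass to the limit using the continuity of the solution map together with the fact that association is preserved under convergence in distribution of finite-dimensional marginals.
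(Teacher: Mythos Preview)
Your proposal is correct and follows essentially the same approach that the paper describes: the paper does not give its own proof of this theorem but cites \cite[Theorem A.4]{CKNP23}, summarizing the method as ``showing that the Malliavin derivative of the solution is non-negative almost surely \ldots\ and then applying the Clark--Ocone formula and It\^o's isometry'' to get nonnegativity of the covariance, with the only additional remark being that the $\delta$-dependent constants in \eqref{SHE} do not affect the argument. Your write-up reproduces exactly this strategy (Clark--Ocone plus It\^o isometry, reduced to $D_{s,y}Z\ge0$, with the observation that the prefactors enter only as positive constants), and your approximation treatment of the narrow-wedge case is the standard way to handle the degeneracy at $t=0$.
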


Now we prove Theorem \ref{th:association}.
\begin{proof}[{Proof of Theorem \ref{th:association}}]
          For $\delta>0$, define
          \begin{align}\label{KPZ}
          \tilde{h}_\delta(t\,, x)= 4\delta\log Z(t,x) + \frac{t}{12}, \quad (t,x)\in (0, \infty)\times\R,
          \end{align}
          where $Z(t\,,x)$ is the solution to \eqref{SHE} subject to flat initial condition $Z(0)\equiv1$. It is clear that for each $\delta>0$, the process 
          $\tilde{h}_\delta=\{\tilde{h}_\delta(t,x): (t,x)\in (0, \infty)\times \R\}$ is associated. Let  $\mathfrak{h}=\{\mathfrak{h}(t, x; 0):
           (t, x)\in (0, \infty)\times \R\}$ be the KPZ fixed point (see \cite{MQR21}) with flat initial data. 
            According to \cite[Theorem 2.2(3)]{QuS23}, 
           as $\delta\to0$, the finite-dimensional distributions of $\tilde{h}_\delta$ converge to those of $\mathfrak{h}$; 
           see also \cite[Theorem 1]{Vir20}.
           Since convergence in distribution preserves the association property (see (P$_5$) of \cite{EPW67}), we see that $\mathfrak{h}$ is associated. 
           Moreover, because $\{\mathfrak{h}(1, x; 0): x\in\R\}$ has the same finite-dimensional distributions as $\{2^{1/3}\mathcal{A}_1(2^{-2/3}x): x\in\R\}$
            (see \cite[(4.15)]{MQR21}), we conclude that the Airy$_1$ process is associated. 
\end{proof}

\begin{proposition}\label{A2:association}
          The Airy$_2$ process $\{\mathcal{A}_2(x): x\in \R\}$ is associated.
\end{proposition}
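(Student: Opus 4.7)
The plan is to mirror the proof of Theorem \ref{th:association}, replacing flat initial data with narrow wedge initial data throughout. By Theorem \ref{th:SHE:ass}, the solution $Z(t,x)$ of \eqref{SHE} with narrow wedge initial condition $Z(0)=\delta_0$ is associated. Since the logarithm is increasing, the KPZ height function
\begin{align*}
\tilde{h}_\delta(t,x) = 4\delta \log Z(t,x) + \tfrac{t}{12}
\end{align*}
is a coordinatewise nondecreasing transformation of the finite-dimensional marginals of $Z$, so $\{\tilde{h}_\delta(t,x)\}_{(t,x)\in(0,\infty)\times\R}$ is associated for every $\delta>0$.

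Next I would invoke the KPZ-fixed-point convergence result (the narrow wedge counterpart of \cite[Theorem 2.2(3)]{QuS23}; see also \cite{Vir20}) to conclude that, as $\delta\to 0$, the finite-dimensional distributions of $\tilde{h}_\delta$ converge to those of the KPZ fixed point $\mathfrak{h}(t,x;\mathfrak{d}_0)$ started from the narrow wedge (``droplet'') initial datum. Since association is preserved under convergence in distribution (property (P$_5$) of \cite{EPW67}), the narrow wedge KPZ fixed point is associated.

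Finally I would connect the narrow wedge fixed point to the Airy$_2$ process using the standard identity (see \cite{MQR21})
\begin{align*}
\mathfrak{h}(1,x;\mathfrak{d}_0) \stackrel{\text{d}}{=} \mathcal{A}_2(x) - x^2
\end{align*}
as processes in $x$ (up to a harmless spatial rescaling identical to the one used in the Airy$_1$ proof). The map
\begin{align*}
(y_1,\ldots,y_m)\mapsto (y_1 + x_1^2,\ldots,y_m + x_m^2)
\end{align*}
is a deterministic coordinatewise increasing bijection, and association is trivially preserved under coordinatewise strictly increasing deterministic shifts as well as under deterministic reindexing of the parameter $x$. Therefore the finite-dimensional distributions of $\{\mathcal{A}_2(x)\}_{x\in\R}$ inherit association from $\mathfrak{h}(1,\cdot;\mathfrak{d}_0)$, proving the claim.

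There is essentially no serious obstacle here: the only conceptual point to watch is that the narrow wedge KPZ fixed point does not equal the Airy$_2$ process itself but differs by the deterministic parabola $-x^2$, and one must observe that this shift does not disturb association. Everything else is a direct transcription of the argument already given for the Airy$_1$ process.
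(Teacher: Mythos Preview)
Your proposal is correct and follows essentially the same route as the paper's proof: start from association of the narrow-wedge SHE solution (Theorem~\ref{th:SHE:ass}), pass to $\tilde{h}_\delta$ via the monotone logarithm, use \cite[Theorem 2.2(3)]{QuS23} to get association of the narrow-wedge KPZ fixed point, and then add back the deterministic parabola $x^2$ to obtain $\mathcal{A}_2$. The paper's argument is identical in structure, including the explicit observation that the coordinatewise shift by $x^2$ preserves association.
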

\begin{proof}
          The proof is similar to that of Theorem \ref{th:association}. Denote by   $\{\mathfrak{h}(t, x; \partial_0):
           (t, x)\in (0, \infty)\times \R\}$ the KPZ fixed point starting from narrow wedge at $0$.  Let $\tilde{h}_\delta(t\,, x)$
           be defined as in \eqref{KPZ}, where $Z(t\,,x)$ is the solution to \eqref{SHE} subject to narrow wedge initial condition $Z(0)\equiv\delta_0$. By Theorem \ref{th:SHE:ass}, we know that for each $\delta>0$, the process $\{\tilde{h}_\delta(t,x): (t,x)\in (0, \infty)\times \R\}$ is associated.  Theorem 2.2(3) of \cite{QuS23} ensures that 
           the finite-dimensional distributions of $\{\tilde{h}_\delta(t,x): (t,x)\in (0, \infty)\times \R\}$ converge to those of 
           $\{\mathfrak{h}(t, x; \partial_0):
           (t, x)\in (0, \infty)\times \R\}$ as $\delta\to0$. This implies that  the process $\{\mathfrak{h}(t, x; \partial_0):
           (t, x)\in (0, \infty)\times \R\}$ is associated. In particular, the process $\{\mathfrak{h}(1, x; \partial_0):
           x\in \R\}$ is associated and hence the process $\{\mathfrak{h}(1, x; \partial_0)+x^2:
           x\in \R\}$ is also associated. Moreover, since the process $\{\mathfrak{h}(1, x; \partial_0)+x^2:
           x\in \R\}$ has the same finite-dimensional distributions as the Airy$_2$ process $\{\mathcal{A}_2(x): x\in\R\}$
           (see \cite[(4.14)]{MQR21}), we conclude that the Airy$_2$ process is associated.
\end{proof}

\begin{remark}
          The association property of Airy processes can also be derived from last passage percolation by extending the argument in \cite[Lemma 3.2]{BBF23}; see also \cite[Remark 2]{BB24}.
\end{remark}

\section{Ergodicity and CLT}\label{sec:ergodicityCLT}

We prove the ergodicity and central limit theorem for the Airy$_1$ process in this section. We first prove the ergodicity. 

\begin{proof}[{Proof of  Theorem \ref{th:CLT}: ergodicity}]
          Chen et al. \cite[Lemma 7.2]{CKNP21} present a general criterion on the ergodicity for stationary process, 
          which has been modified to a variant by 
          Balan and Zheng \cite{BaZ23}. According to Balan and Zheng \cite[Lemma 4.2]{BaZ23}, 
          in order to prove that the Airy$_1$ process  is ergodic, we need to verify that 
          \begin{align}\label{variance}
          \lim_{N\to\infty}\frac{1}{N^2}\Var\bigg(\int_{[0, N]} G\Big(\sum_{j=1}^kb_i\mathcal{A}_1(x+\zeta_j)\Big)\mathrm{d} x\bigg)=0,
          \end{align}
          for all integers $k\geq 1$, for every $b_1, \ldots, b_k$, $\zeta_1, \ldots, \zeta_k\in \R$, and for $G\in \{x\mapsto \cos x, x\mapsto \sin x\}$.
          
          Notice that 
          \begin{align}\label{var2}
          &\frac{1}{N^2}\Var\bigg(\int_{[0, N]} G\Big(\sum_{j=1}^kb_i\mathcal{A}_1(x+\zeta_j)\Big)\mathrm{d} x\bigg) \nonumber\\
          & \qquad \qquad= \frac{1}{N^2}\int_{[0, N]^2}\mathrm{d} x\mathrm{d} y\, \Cov\bigg(G\Big(\sum_{j=1}^kb_j\mathcal{A}_1(x+\zeta_j)\Big)\,, 
          G\Big(\sum_{j=1}^kb_j\mathcal{A}_1(y+\zeta_j)\Big)\bigg).
          \end{align}
          Since the Airy$_1$ process is associated, we apply the inequality \eqref{Newman2} with $Y_j= \mathcal{A}_1(x+\zeta_j)$ for $j=1, \ldots, k$, $Y_j= \mathcal{A}_1(y+\zeta_{j-k})$ for
           $j=k+1, \ldots, 2k$,
          $f(y_1, \ldots, y_{2k})=G(\sum_{j=1}^kb_jy_j)$, and $g(y_1, \ldots, y_{2k})=G(\sum_{j=k+1}^{2k}b_{j-k}y_{j})$, to see that
          \begin{align*}
           & \Cov\bigg(G\Big(\sum_{j=1}^kb_i\mathcal{A}_1(x+\zeta_j)\Big)\,, 
          G\Big(\sum_{j=1}^kb_i\mathcal{A}_1(y+\zeta_j)\Big)\bigg)
          \leq  \sum_{j=1}^{k}\sum_{\ell=1}^k|b_j||b_\ell| \Cov(\mathcal{A}_1(x+\zeta_j)\,, \mathcal{A}_1(y+\zeta_\ell)).
          \end{align*}
          Here, we also use the fact that  $\lip_j(f)\leq |b_j|$ for $1\leq j\leq k$, $\lip_j(f)=0$ for $j=k+1, \ldots, 2k$ and 
          $\lip_\ell(g)=0$ for $1\leq \ell\leq k$, $\lip_\ell(g)\leq |b_{\ell-k}|$ for $\ell=k+1, \ldots, 2k$.
          According to \cite[Corollary 2.2]{BBF23}, there exist constants $c, c_1, c_2>0$ such that for all $x\in \R$,
          \begin{align*}
          \Cov(\mathcal{A}_1(x)\,, \mathcal{A}_1(0)) \leq \e^{cx^2}\e^{-\frac43|x|^{3}}\leq K(x):=c_1\e^{-c_2x^2}.
          \end{align*}
          Hence, the left-hand side of \eqref{var2} is bounded above by
          \begin{align*}
          &\frac{ \sup_{1\leq j\leq k}b_j^2}{N^2} \sum_{j=1}^{k}\sum_{\ell=1}^k\int_{[0, N]^2}K(x-y+\zeta_j-\zeta_\ell)\mathrm{d} x\mathrm{d} y = \left[ \sup_{1\leq j\leq k}b_j^2\right]
          \sum_{j=1}^{k}\sum_{\ell=1}^k \left(\bm{1}_N*\tilde{\bm{1}}_N*K\right)(\zeta_j-\zeta_\ell),
          \end{align*}
          where the function $\bm{1}_N$ is defined as $\bm{1}_N(x)= \frac1N\bm{1}_{[0, N]}(x)$ for $x\in \R$ and  $\tilde{\bm{1}}_N(x)=  \bm{1}_N(-x)$. 
          The function $\bm{1}_N*\tilde{\bm{1}}_N*K$ is non-negative definite and hence maximized at the origin. Hence we have
          \begin{align*}
          \frac{1}{N^2}\Var\bigg(\int_{[0, N]} g\Big(\sum_{j=1}^kb_i\mathcal{A}_1(x+\zeta_j)\Big)\mathrm{d} x\bigg)&\le
          \sup_{1\leq j\leq k}b_j^2 \, k^2 \left(\bm{1}_N*\tilde{\bm{1}}_N*K\right)(0)
           \leq  \sup_{1\leq j\leq k}b_j^2 \frac{k^2}{N}\int_{-N}^N K(x)\mathrm{d} x
          \end{align*}
          which converges to $0$ as $N\to\infty$. Thus, \eqref{variance} is verified and we complete the proof of ergodicity. 
\end{proof}

\begin{remark}
          We refer to \cite{CoS14, PrS02} for the ergodicity of the Airy$_2$ process.
\end{remark}

We proceed to prove the central limit theorem \eqref{CLT}.

\begin{proof}[{Proof of  Theorem \ref{th:CLT}: CLT}]
          We will apply \cite[Theorem 2]{New80} (see also \cite[Theorem 1.2.19]{PR12}) to prove the central limit theorem in Theorem \ref{th:CLT}. 
          We can assume that the limit in \eqref{CLT} is taken along integers. 
          Denote 
          \begin{align*}
          X_k= \int_{k}^{k+1}\mathcal{A}_1(x) \mathrm{d} x, \quad k\in\mathbb{Z}.
          \end{align*}
          Since the Airy$_1$ process is stationary and $\mathcal{A}_1(0)$ has finite second moment, the process $\{X_k: k\in\mathbb{Z}\}$ 
          is stationary and $X_0$ has 
          finite second moment.  For each $k\in\mathbb{Z}$, let 
          \begin{align*}
          X_k^{(n)}= \frac{1}{n}\sum_{j=1}^{n}\mathcal{A}_1(k+\frac{j}{n}), \quad n\geq 1.
          \end{align*}
          Clearly, for each $n\geq1$, the process $\{X_k^{(n)}: k\in \mathbb{Z}\}$ is associated. 
          Moreover, since the Airy$_1$ process has continuous sample paths (see \cite{QuR13}), we know that as $n\to\infty$, 
          the finite-dimensional distributions of 
          $\{X_k^{(n)}: k\in \mathbb{Z}\}$ converge to those of $\{X_k: k\in\mathbb{Z}\}$. Hence, the process $\{X_k: k\in\mathbb{Z}\}$ is associated.

          In order to apply \cite[Theorem 2]{New80}, it remains to verify that
          \begin{align*}
          \sigma^2:=\sum_{k\in\mathbb{Z}}\Cov(X_0\,, X_k)
          \end{align*}
          is finite. Indeed, by the stationarity of the Airy$_1$ process,
          \begin{align*}
          \sigma^2&=\sum_{k\in\mathbb{Z}}\Cov\left(\int_0^1\mathcal{A}_1(x)\mathrm{d} x\,, \int_k^{k+1}\mathcal{A}_1(y)\mathrm{d} y\right)
          =\sum_{k\in\mathbb{Z}} \int_0^1\mathrm{d} x\int_k^{k+1}\mathrm{d} y \Cov(\mathcal{A}_1(x)\,, \mathcal{A}_1(y))\\
          &= \int_0^1\mathrm{d} x\int_\R\mathrm{d} y \Cov(\mathcal{A}_1(x-y)\,, \mathcal{A}_1(0))= \int_\R \Cov(\mathcal{A}_1(x)\,, \mathcal{A}_1(0))\mathrm{d} x,
          \end{align*}
          which is finite by  \cite[Corollary 2.2]{BBF23}.
          
          Therefore, all the conditions in \cite[Theorem 2]{New80} are met and we conclude that as $N\to\infty$,
          \begin{align*}
          \frac{X_0+\ldots +X_{N-1}-N\E[X_0]}{\sqrt{N}}\xrightarrow{\rm d}{\rm N}(0, \sigma^2),
          \end{align*}
          which implies \eqref{CLT}.
\end{proof}

\begin{remark}
           (1) We can also apply \cite[Theorem 2]{New80} to derive a discrete version of \eqref{CLT}, that is, as $N\to\infty$,
          \begin{align*}
          \frac{\mathcal{A}_1(1)+\ldots+\mathcal{A}_1(N)-N\E[\mathcal{A}_1(0)]}{\sqrt{N}}\xrightarrow{\rm d}{\rm N}(0, \tau^2)
          \end{align*}
          with $\tau^2=\sum_{k\in\mathbb{Z}}\Cov(\mathcal{A}_1(k)\,, \mathcal{A}_1(0))$.
          
          (2) One can apply the Berry-Esseen theorem for associated sequences (see for instance \cite[p.15-16]{PR12}) to obtain the rate
          of convergence in the central limit theorem for the Airy$_1$ process. 
          
          (3) Recall that $\{\mathfrak{h}(t, x; 0):
           (t, x)\in (0, \infty)\times \R\}$ denotes the KPZ fixed point (see \cite{MQR21}) with flat initial data. 
           Using the scaling invariance property of the KPZ fixed point (see \cite[Theorem 4.5(i)]{MQR21}) and \eqref{CLT},
           we deduce that for fixed $t>0$, as $N\to\infty$,  
           \begin{align*}
           \frac{1}{\sqrt{N}}\int_0^N\left( \mathfrak{h}(t, x; 0)- \E[\mathfrak{h}(t, x; 0)]\right)\, \mathrm{d} x \xrightarrow{\rm d} 
           {\rm N}(0, 2^{4/3}t^{4/3}\sigma^2),
           \end{align*}
           where $\sigma^2$ is the quantity given in Theorem \ref{th:CLT}. One may expect that a corresponding 
           functional central limit theorem in $t$ also holds. 
\end{remark}

Similarly, we can also establish the central limit theorem for the Airy$_2$ process.
\begin{proposition}
           As $N\to\infty$,
          \begin{align*}
          \frac{1}{\sqrt{N}}\int_0^N (\mathcal{A}_2(x)-  \E[\mathcal{A}_2(x)])\ \mathrm{d} x \xrightarrow{\rm d} {\rm N}(0, \tilde{\sigma}^2), 
          \end{align*}
          where $\tilde{\sigma}^2= \int_\R \Cov(\mathcal{A}_2(x)\,, \mathcal{A}_2(0)) \mathrm{d} x\in (0, \infty)$.
\end{proposition}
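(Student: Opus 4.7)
The plan is to follow the strategy used for the Airy$_1$ CLT in Theorem \ref{th:CLT} almost verbatim, replacing $\mathcal{A}_1$ by $\mathcal{A}_2$ and invoking Proposition \ref{A2:association} in place of Theorem \ref{th:association}. First I would set $Y_k := \int_k^{k+1}\mathcal{A}_2(x)\,\mathrm{d} x$ for $k\in\mathbb{Z}$. Stationarity of the Airy$_2$ process, together with the fact that the GUE Tracy-Widom distribution has finite moments of all orders, shows that $\{Y_k\}_{k\in\mathbb{Z}}$ is stationary with $\E[Y_0^2]<\infty$. To transfer the association property from the Airy$_2$ process to the integrated sequence, I would introduce the Riemann-sum approximants $Y_k^{(n)}:=\frac{1}{n}\sum_{j=1}^n\mathcal{A}_2(k+j/n)$, which are associated by Proposition \ref{A2:association} (since association is preserved by coordinatewise nondecreasing transformations), and then pass to the limit $n\to\infty$ using the continuity of the Airy$_2$ sample paths and property (P$_5$) of \cite{EPW67}, exactly as in the Airy$_1$ argument.

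Next I would verify that
\[
\tilde{\sigma}^2 \;=\; \sum_{k\in\mathbb{Z}}\Cov(Y_0\,,Y_k) \;=\; \int_\R \Cov(\mathcal{A}_2(x)\,,\mathcal{A}_2(0))\,\mathrm{d} x
\]
lies in $(0,\infty)$. Unlike the super-exponential decay of $\Cov(\mathcal{A}_1(x),\mathcal{A}_1(0))$ from \cite[Corollary 2.2]{BBF23}, the Airy$_2$ covariance decays only polynomially. It is known (Widom; Adler--van Moerbeke) that $\Cov(\mathcal{A}_2(x),\mathcal{A}_2(0))\sim cx^{-2}$ as $|x|\to\infty$, which is integrable at infinity; near the origin the covariance is bounded by $\Var(\mathcal{A}_2(0))<\infty$ and continuous, so integrability on compacta is automatic. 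Positivity of $\tilde{\sigma}^2$ follows because the covariance is continuous and strictly positive in a neighborhood of zero (being an associated, non-degenerate process).

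With these ingredients in place, an application of Newman's CLT \cite[Theorem 2]{New80} to the stationary, associated, square-integrable sequence $\{Y_k\}_{k\in\mathbb{Z}}$ with summable covariance gives
\[
\frac{Y_0+\cdots+Y_{N-1}-N\E[Y_0]}{\sqrt{N}}\xrightarrow{\rm d}{\rm N}(0,\tilde{\sigma}^2),
\]
which is the desired CLT along integer $N$. Passing from integer to continuous $N$ follows from a standard boundary-term estimate using $\E[\mathcal{A}_2(0)^2]<\infty$, as in the Airy$_1$ case.

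The main obstacle is verifying that $\tilde\sigma^2<\infty$: one must carefully cite (or reprove, via the determinantal structure of the Airy$_2$ process and pointwise estimates on the extended Airy kernel) the quadratic decay of the Airy$_2$ covariance. Everything else in the argument is a routine adaptation of the proof already carried out for Theorem \ref{th:CLT}.
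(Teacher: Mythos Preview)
Your proposal is correct and matches the paper's approach essentially verbatim: the paper's proof of this proposition is a one-sentence remark that it follows the lines of the Airy$_1$ CLT, invoking stationarity, the association of $\mathcal{A}_2$ from Proposition~\ref{A2:association}, Newman's CLT \cite[Theorem~2]{New80}, and the $x^{-2}$ covariance decay from \cite{Wid04} to ensure $\tilde\sigma^2<\infty$. Your write-up is in fact more detailed than the paper's, but the strategy is identical.
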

\begin{proof}
          The proof follows along the same lines as that of \eqref{CLT}, applying \cite[Theorem 2]{New80} with the stationarity (see \cite{PrS02}), association (see Proposition \ref{A2:association}) of the Airy$_2$ process and the fact that the covariance of the Airy$_2$ process is integrable since the decay rate of  
          $\Cov(\mathcal{A}_2(x)\,, \mathcal{A}_2(0))$ is $x^{-2}$ as $x\to\infty$ (see \cite{Wid04}).
\end{proof}

We conclude this section by another application of Newman's inequality, which is used to study the Lebesgue measure of the set $x\in [0, N]$ where $\mathcal{A}_1(x)$ exceeds a high level.  The following result is analogous to \cite[Corollary 9.5]{CKNP21}, where the Poincar\'e inequality is used to estimate the covariance of the solution to stochastic partial differential equation. In the context of the Airy$_1$ process, when it comes to the estimate of covariance, Newman's inequality plays the role as the Poincar\'e inequality.

\begin{corollary}
          For $\alpha\in (0, 3/4)$, as $N\to\infty$,
          \begin{align}\label{con:pro}
          \frac{1}{\log N}\log\left(\int_0^N \bm{1}_{\{\mathcal{A}_1(x)>\frac12 (\alpha \log N)^{2/3}\}}\mathrm{d} x\right) \to  1-\frac{2}{3}\alpha, \quad \text{in probability}.
          \end{align}
\end{corollary}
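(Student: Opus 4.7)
The plan is to apply the second moment method. Set $t_N := \frac{1}{2}(\alpha\log N)^{2/3}$, let
\[
L_N := \int_0^N \bm{1}_{\{\mathcal{A}_1(x) > t_N\}}\,\mathrm{d} x,
\]
and $p_N := \P\{\mathcal{A}_1(0) > t_N\} = 1 - F_1((\alpha\log N)^{2/3})$. By stationarity $\E[L_N] = Np_N$, and the tail estimate \eqref{TWtail} gives, for every $\epsilon \in (0, 2/3)$ and all $N$ sufficiently large,
\[
N^{1-(2/3+\epsilon)\alpha} \;\leq\; \E[L_N] \;\leq\; N^{1-(2/3-\epsilon)\alpha},
\]
so $\log\E[L_N]/\log N \to 1 - 2\alpha/3$. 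Hence \eqref{con:pro} will follow from $L_N/\E[L_N] \to 1$ in probability, which by Chebyshev reduces to $\Var(L_N) = o(\E[L_N]^2)$.

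The central step is a uniform-in-$N$ bound on $c(h) := \Cov(\bm{1}_{\{\mathcal{A}_1(0) > t_N\}}, \bm{1}_{\{\mathcal{A}_1(h) > t_N\}})$. Since indicators are not Lipschitz, I smooth: for $\delta > 0$ define $\phi_\delta(y) := \min\{1, \max\{0, (y - t_N)/\delta\}\}$, which is nondecreasing and $(1/\delta)$-Lipschitz. By Theorem \ref{th:association} and Newman's inequality \eqref{Newman},
\[
\Cov\bigl(\phi_\delta(\mathcal{A}_1(0)), \phi_\delta(\mathcal{A}_1(h))\bigr) \;\leq\; \delta^{-2}\Cov(\mathcal{A}_1(0), \mathcal{A}_1(h)).
\]
Since $|\bm{1}_{\{y > t_N\}} - \phi_\delta(y)| \leq \bm{1}_{[t_N, t_N+\delta]}(y)$ and the density of $\mathcal{A}_1(0)$ is bounded (as noted after \eqref{TW4}), a straightforward expansion of the covariance shows the replacement error is at most $C\delta$, uniformly in $h$ and $N$. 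Balancing at $\delta = \Cov(\mathcal{A}_1(0), \mathcal{A}_1(h))^{1/3}$ therefore yields
\[
0 \;\leq\; c(h) \;\leq\; C' \bigl[\Cov(\mathcal{A}_1(0), \mathcal{A}_1(h))\bigr]^{1/3},
\]
with $C'$ independent of $N$ and $h$, positivity coming from association.

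By \cite[Corollary 2.2]{BBF23}, $\Cov(\mathcal{A}_1(0), \mathcal{A}_1(h))$ decays super-exponentially in $|h|$, so $c(h)$ does too and $K_0 := \int_\R c(h)\,\mathrm{d} h < \infty$ uniformly in $N$. Stationarity gives $\Var(L_N) = \int_0^N\!\int_0^N c(x-y)\,\mathrm{d} x\,\mathrm{d} y \leq NK_0$, and Chebyshev's inequality yields
\[
\P\{|L_N - Np_N| > \eta N p_N\} \;\leq\; \frac{K_0}{\eta^2 Np_N^2}.
\]
For $\alpha < 3/4$, choose $\epsilon > 0$ so small that $(4/3 + 2\epsilon)\alpha < 1$; then $Np_N^2 \geq N^{1-(4/3+2\epsilon)\alpha} \to \infty$, the right side vanishes, so $L_N/\E[L_N] \to 1$ in probability and \eqref{con:pro} follows after dividing $\log L_N = \log \E[L_N] + \log(L_N/\E[L_N])$ by $\log N$ and letting $\epsilon \to 0$.

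The main obstacle is the pointwise covariance bound for indicators: Newman's inequality is stated for Lipschitz test functions, and optimizing the smoothing parameter costs a cube root. What saves the argument is the super-exponential decay of $\Cov(\mathcal{A}_1(0), \mathcal{A}_1(h))$ from \cite{BBF23}, which keeps $\Cov^{1/3}$ integrable and gives the uniform estimate $\Var(L_N) = O(N)$; the threshold $\alpha < 3/4$ then arises precisely from requiring $Np_N^2 \to \infty$ in the Chebyshev step.
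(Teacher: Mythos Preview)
Your proof is correct and follows the same second-moment/Chebyshev template as the paper, but the two handle the non-Lipschitz indicator differently. The paper sandwiches $\bm{1}_{(a_N,\infty)}$ between two functions $g_N(z)=1\wedge(z-a_N)_+$ and $G_N(z)=1\wedge(z-a_N+1)_+$ of Lipschitz constant~$1$ (independent of $N$), applies \eqref{Newman2} directly to each, and obtains the clean bound $\Var\!\left(\int_0^N g_N(\mathcal{A}_1)\right)\le N\int_\R\Cov(\mathcal{A}_1(x),\mathcal{A}_1(0))\,\mathrm{d} x$ with no loss; the sandwich then transfers the conclusion to the indicator. You instead smooth the indicator with a $\delta$-dependent approximation, balance the $O(\delta)$ density error against the $\delta^{-2}$ Newman penalty, and obtain $c(h)\le C'[\Cov(\mathcal{A}_1(0),\mathcal{A}_1(h))]^{1/3}$ --- this is essentially a rederivation of inequality \eqref{eq:prob}, which the paper quotes from \cite{PR12} and uses later for Theorem~\ref{maximum}. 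The super-exponential decay from \cite{BBF23} makes the cube-root loss harmless here, so both routes close; the paper's fixed-width sandwich is slightly more direct, while your argument has the incidental benefit of producing a pointwise indicator-covariance bound. (Minor remark: the ``letting $\epsilon\to0$'' in your last line is superfluous, since $\epsilon$ already disappeared once you established $\log\E[L_N]/\log N\to 1-\tfrac{2}{3}\alpha$.)
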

\begin{proof}
          Choose and fix $\alpha\in (0, 3/4)$. For $N\geq1$, we define $a_N=\frac12(\alpha \log N)^{2/3}$ and the two functions
          \begin{align*}
          G_N(z):= 1\wedge(z-a_N+1)_+ \quad \text{and} \quad g_N(z)= 1\wedge (z-a_N)_+, \quad z\in\R. 
          \end{align*}
          Recall that $\mathrm{P}\{\mathcal{A}_1(0)\le s\}= F_1(2s)$, where $F_1$ denotes the GOE Tracy-Widom distribution function. 
          According to \cite[Theorem 1]{DuV13}, $s^{-3/2}\log\mathrm{P}\{\mathcal{A}_1(0)>\frac12 s\}\to -\frac{2}3$ as $s\to+\infty$. By stationarity of the Airy$_1$ process,
          we see that as $N\to\infty$, 
          \begin{align}\label{tail}
          \E \int_0^Ng_N(\mathcal{A}_1(x))\mathrm{d} x =N^{1-\frac{2}{3}\alpha +o(1)} \quad \text{and} \quad 
          \E \int_0^NG_N(\mathcal{A}_1(x))\mathrm{d} x =N^{1-\frac{2}{3}\alpha +o(1)}.
          \end{align}
          Using Chebychev's inequality, we have for any fixed $\epsilon \in (0, 1)$, 
          \begin{align}\label{cheby}
          &\mathrm{P}\left\{\left| \int_0^Ng_N(\mathcal{A}_1(x))\mathrm{d} x - \E  \int_0^Ng_N(\mathcal{A}_1(x))\mathrm{d} x \right| >\epsilon\,  \E  \int_0^Ng_N(\mathcal{A}_1(x))\mathrm{d} x \right\}\nonumber\\
          &\qquad\qquad\qquad\qquad\qquad\qquad\qquad
          \leq \Var\left( \int_0^Ng_N(\mathcal{A}_1(x))\mathrm{d} x\right)\epsilon^{-2}\left|\E  \int_0^Ng_N(\mathcal{A}_1(x))\mathrm{d} x\right|^{-2}.
          \end{align}
          Notice that for all $N\geq1$
          \begin{align}\label{var}
          \Var\left( \int_0^Ng_N(\mathcal{A}_1(x))\mathrm{d} x\right) &=\int_{[0, N]^2}\mathrm{d} x\mathrm{d} y\, \Cov(g_N(\mathcal{A}_1(x))\,, g_N(\mathcal{A}_1(y)))\nonumber\\
          &\leq [\lip(g_N)]^2 \int_{[0, N]^2}\mathrm{d} x\mathrm{d} y\, \Cov(\mathcal{A}_1(x)\,, \mathcal{A}_1(y))\nonumber\\
          &=  \int_{[0, N]^2}\mathrm{d} x\mathrm{d} y\, \Cov(\mathcal{A}_1(x)\,, \mathcal{A}_1(y)) \leq N\int_\R
           \Cov(\mathcal{A}_1(x)\,, \mathcal{A}_1(0)) \mathrm{d} x,
          \end{align}
          where the first inequality holds by the association property of the Airy$_1$ process and \eqref{Newman2}, and the second equality is because $\lip(g_N)=1$.
           Hence, we substitute \eqref{var}  into \eqref{cheby} to see that 
                  \begin{align*}
          &\mathrm{P}\left\{\left| \int_0^Ng_N(\mathcal{A}_1(x))\mathrm{d} x - \E  \int_0^Ng_N(\mathcal{A}_1(x))\mathrm{d} x \right| >\epsilon\,  
          \E  \int_0^Ng_N(\mathcal{A}_1(x))\mathrm{d} x \right\} \\
          &\qquad\qquad\qquad\qquad\qquad\qquad\qquad\qquad \qquad
           \leq \epsilon^{-2}N^{\frac{4}{3}\alpha -1+o(1)}\int_\R\Cov(\mathcal{A}_1(x)\,, \mathcal{A}_1(0)) \mathrm{d} x .
          \end{align*}
          Since $\alpha\in (0, 3/4)$, we obtain that as $N\to\infty$,
          \begin{align*}
          \frac{ \int_0^Ng_N(\mathcal{A}_1(x))\mathrm{d} x }{\E \int_0^Ng_N(\mathcal{A}_1(x))\mathrm{d} x } \to 1 \quad \text{in probability}.
          \end{align*}
          Taking logarithm and using \eqref{tail}, we conclude that as $N\to\infty$,
          \begin{align*}
          \frac{\log \left(\int_0^Ng_N(\mathcal{A}_1(x))\mathrm{d} x\right)}{\log N}\to 1-\frac{2}{3}\alpha  \quad \text{in probability}.
          \end{align*}
          Similarly, we can prove that 
          as $N\to\infty$,
          \begin{align*}
          \frac{\log \left(\int_0^NG_N(\mathcal{A}_1(x))\mathrm{d} x\right)}{\log N}\to 1-\frac{2}{3}\alpha  \quad \text{in probability}.
          \end{align*}
          Since $g_N\leq \bm{1}_{(a_N, \infty)} \leq G_N$, the preceding displays imply \eqref{con:pro} and hence complete the proof.
\end{proof}
\begin{remark}
          The convergence in \eqref{con:pro} also holds almost surely. This follows from a standard subsequencing 
          and sandwich  type argument (see the proof of \cite[Theorem 2.3.9]{Dur19}). We omit the details here.
\end{remark}

\section{Poisson approximation}\label{Sec:Poisson}

We prove Theorem \ref{th:Poisson} in this section.  The proof of Theorem \ref{th:Poisson} is based on inequality \eqref{Newman3} and some standard arguments; see \cite[Theorem 11]{New84} for a general result on limit theorems for sums of associated variables.

The indicator function in \eqref{eq:Poisson} is not Lipschitz continuous. In order to apply the inequalities \eqref{Newman2} and
\eqref{Newman3} to derive the Poisson limit theorem, we will first approximate the indicator function  in \eqref{eq:Poisson} by a Lipschitz continuous function. Fix $\lambda>0$. For $N\geq 2$, we introduce the function
\begin{align}\label{g_N}
g_N(z)= 1\wedge \frac{(z-\frac12{\rm T}(\lambda/N))_+}{[{\rm T}(\lambda/(N+1))-{\rm T}(\lambda/N)]/2}, \quad z\in \R,
\end{align}
where we recall that $\rm T$ denotes by the inverse function of $s\mapsto 1-F_1(s)$.
It is clear that for all $N\geq 2$,
\begin{align}\label{lowerupper}
g_N(z) \leq \bm{1}_{(\frac12{\rm T}(\lambda/N), \infty)}(z) \leq g_{N-1}(z), \quad \text{for all $z\in\R$},
\end{align}
and
\begin{align}\label{lip}
\lip(g_N) = \frac{2}{{\rm T}(\lambda/(N+1))-{\rm T}(\lambda/N)}.
\end{align}

We first prove the following result on Poisson approximation for the Airy$_1$ process. 
\begin{proposition}\label{prop:Poisson}
          Fix $\lambda>0$.
          Let $g_N$ be defined as in \eqref{g_N} for $N\geq 2$. Let $\{x_{k, N}\}_{N\geq1, 1\leq k\leq N}\subset \R$ satisfy the 
          condition in \eqref{distance}. Then, as $N\to\infty$,
          \begin{align}\label{Poisson:g}
          \sum_{k=1}^Ng_N(\mathcal{A}_1(x_{k,N})) \xrightarrow{\rm d} {\rm Poisson}(\lambda).
          \end{align} 
\end{proposition}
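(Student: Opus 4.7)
The overall plan is to prove \eqref{Poisson:g} by establishing pointwise convergence of the characteristic function of $S_N:=\sum_{k=1}^N Y_k^{(N)}$, with $Y_k^{(N)}:=g_N(\mathcal{A}_1(x_{k,N}))$, to $\exp(\lambda(\e^{{\rm i}t}-1))$, the characteristic function of ${\rm Poisson}(\lambda)$. This is essentially the strategy of \cite[Theorem 11]{New84}. Because $g_N$ is nondecreasing and the Airy$_1$ process is associated (Theorem \ref{th:association}), the random vector $(Y_1^{(N)},\ldots,Y_N^{(N)})$ is itself associated by property (P$_4$) of \cite{EPW67}, so the characteristic-function inequality \eqref{Newman3} is applicable.

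The first main step is to show that $\left|\E[\e^{{\rm i}tS_N}]-\prod_{k=1}^N\E[\e^{{\rm i}tY_k^{(N)}}]\right|\to 0$ as $N\to\infty$. By \eqref{Newman3} with $r_j=t$ for all $j$, this difference is dominated by $\tfrac{t^2}{2}\sum_{j\neq k}\Cov(Y_j^{(N)},Y_k^{(N)})$, and Newman's Lipschitz inequality \eqref{Newman} gives $\Cov(Y_j^{(N)},Y_k^{(N)})\leq[\lip(g_N)]^2\Cov(\mathcal{A}_1(x_{j,N}),\mathcal{A}_1(x_{k,N}))$. By \eqref{lip} and Lemma \ref{lem:diff}, for any fixed small $\epsilon\in(0,2/3)$ we have $\lip(g_N)\leq C N^{(2+6\epsilon)/(2+3\epsilon)}$, an exponent that can be made arbitrarily close to $1$ by taking $\epsilon\to 0^+$. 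On the other hand, the distance condition \eqref{distance} furnishes some $\delta>0$ with $\min_{j\neq k}|x_{j,N}-x_{k,N}|\geq(1+\delta)(3\log N)^{1/3}$ for all large $N$, and combining this with the Basu--Bhattacharjee--Ferrari covariance bound $\Cov(\mathcal{A}_1(x),\mathcal{A}_1(0))\leq\e^{cx^2-(4/3)|x|^3}$ recalled in Section \ref{sec:ergodicityCLT} yields $\Cov(\mathcal{A}_1(x_{j,N}),\mathcal{A}_1(x_{k,N}))\leq N^{-4(1+\delta)^3+o(1)}$. Summing over the $O(N^2)$ pairs and multiplying by $[\lip(g_N)]^2\leq N^{2+o(1)}$, the total bound becomes of order $N^{4-4(1+\delta)^3+o(1)}$, which vanishes since $(1+\delta)^3>1$. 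This careful balancing of the Lipschitz blow-up from Lemma \ref{lem:diff} against the super-exponential covariance decay of the Airy$_1$ process is the main technical obstacle.

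The second step computes the limit of the product. By stationarity of the Airy$_1$ process, every $Y_k^{(N)}$ shares the common law of $Y_1^{(N)}=g_N(\mathcal{A}_1(0))$, so $\prod_{k=1}^N\E[\e^{{\rm i}tY_k^{(N)}}]=\bigl(\E[\e^{{\rm i}tY_1^{(N)}}]\bigr)^N$. The definition \eqref{g_N}, together with the identity $\mathrm{P}\{\mathcal{A}_1(0)\leq s\}=F_1(2s)$ and the definition of ${\rm T}$, gives $\mathrm{P}\{Y_1^{(N)}=1\}=\mathrm{P}\{\mathcal{A}_1(0)\geq\tfrac{1}{2}{\rm T}(\lambda/(N+1))\}=\lambda/(N+1)$, $\mathrm{P}\{Y_1^{(N)}=0\}=F_1({\rm T}(\lambda/N))=1-\lambda/N$, and $\mathrm{P}\{0<Y_1^{(N)}<1\}=\lambda/N-\lambda/(N+1)=O(1/N^2)$. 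Since $|\e^{{\rm i}tY_1^{(N)}}-1|\leq|t|$ on the intermediate event, a direct decomposition yields
\begin{align*}
\E\bigl[\e^{{\rm i}tY_1^{(N)}}\bigr]=1+(\e^{{\rm i}t}-1)\frac{\lambda}{N+1}+O(1/N^2),
\end{align*}
so $\bigl(\E[\e^{{\rm i}tY_1^{(N)}}]\bigr)^N\to\exp(\lambda(\e^{{\rm i}t}-1))$. Combining this with the first step and invoking L\'evy's continuity theorem completes the proof of \eqref{Poisson:g}.
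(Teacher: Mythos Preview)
Your proof is correct and follows essentially the same route as the paper: apply Newman's characteristic-function inequality \eqref{Newman3} together with \eqref{Newman2}/\eqref{Newman}, \eqref{lip}, Lemma \ref{lem:diff}, and the Basu--Busani--Ferrari covariance bound to show the difference between the joint and product characteristic functions vanishes, then identify the limit of the product. The only cosmetic difference is in this last step: the paper introduces auxiliary i.i.d.\ variables $U_{k,N}\overset{d}{=}g_N(\mathcal{A}_1(0))$ and sandwiches the moment generating function using $\mathbf{1}_{(\frac12{\rm T}(\lambda/(N+1)),\infty)}\leq g_N\leq\mathbf{1}_{(\frac12{\rm T}(\lambda/N),\infty)}$, whereas you compute $\mathrm{P}\{Y_1^{(N)}=0\}$, $\mathrm{P}\{Y_1^{(N)}=1\}$, $\mathrm{P}\{0<Y_1^{(N)}<1\}$ explicitly and expand the characteristic function directly---both arrive at the same Poisson limit.
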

\begin{proof}
          {\bf Step 1}.
          Let $U_{1,N}, \ldots, U_{N, N}$ be independent random variables, each having the same distribution
           as $g_N(\mathcal{A}_1(0))$ (for this, we can enlarge the probability space if necessary). The first task will be to show 
           that
           \begin{align}\label{Poisson:U}
           \sum_{k=1}^NU_{k,N} \xrightarrow{\rm d} {\rm Poisson}(\lambda), \quad \text{as $N\to\infty$}.
           \end{align}

           Notice that the law of the family of the random variables $\sum_{k=1}^NU_{k,N}, N\geq1$ is tight. This is because that 
           by the definition of $g_N$ in \eqref{g_N},
           \begin{align*}
           \sup_{N\geq1}\E\left[\sum_{k=1}^NU_{k,N}\right] = \sup_{N\geq1}\left(N\E[g_N(\mathcal{A}_1(0))]\right) \leq 
            \sup_{N\geq1}\left(N\E\left[\bm{1}_{(\frac12{\rm T}(\lambda/N), \infty)}(\mathcal{A}_1(0))\right]\right)= \lambda,
           \end{align*}
           where the first inequality is due to the first inequality in \eqref{lowerupper}. Thus, in order to prove \eqref{Poisson:U}, 
           it suffices to prove that the moment generating function of $\sum_{k=1}^NU_{k,N}$ converges to that of ${\rm Poisson}
           (\lambda)$, as $N\to\infty$.
           
           For $\theta\in \R$, define
           \begin{align*}
           \phi_N(\theta)&:= \E\left[\e^{-\theta g_N(\mathcal{A}_1(0))}\right]\quad \text{and} \quad
           \Phi_N(\theta):=\E\left[\e^{-\theta \sum_{k=1}^NU_{k,N}}\right]=\left(\phi_N(\theta)\right)^N.
           \end{align*}
           Thanks to \eqref{lowerupper}, for all $\theta>0$, we have
           \begin{align*}
           \phi_N(\theta)\geq \E\left[\e^{-\theta \bm{1}_{(\frac12{\rm T}(\lambda/N), \infty)}(\mathcal{A}_1(0))}\right] 
           = \frac{\lambda}{N}\e^{-\theta} + (1-\frac{\lambda}{N}) = 1- \frac{\lambda}{N}(1-\e^{-\theta}),
           \end{align*}
           and 
           \begin{align*}
           \phi_N(\theta)\leq \E\left[\e^{-\theta \bm{1}_{(\frac12{\rm T}(\lambda/(N+1)), \infty)}(\mathcal{A}_1(0))}\right]
           =1- \frac{\lambda}{N+1}(1-\e^{-\theta}).
           \end{align*}
           This leads to that for all $\theta>0$,
           \begin{align}\label{moment}
           \left(1- \frac{\lambda}{N}(1-\e^{-\theta})\right)^N \leq \Phi_N(\theta) \leq
            \left(1- \frac{\lambda}{N+1}(1-\e^{-\theta})\right)^N.
           \end{align}
           Letting $N\to\infty$, we obtain that for all $\theta>0$, 
           \begin{align*}
           \lim_{N\to\infty}\Phi_N(\theta) = \e^{-\lambda(1-\e^{-\theta})}.
           \end{align*}
           The above display also holds when $\theta \leq 0$ by switching the first and third terms in \eqref{moment}. 
           This verifies \eqref{Poisson:U}.
           
           {\bf Step 2}. We proceed to compare the characteristic functions of $\sum_{k=1}^Ng_N(\mathcal{A}_1(x_{k,N}))$ and
           $\sum_{k=1}^NU_{k, N}$. First, we have for all $\theta\in\R$,
           \begin{align}\label{cov4}
          &\left| \E\left[\e^{{\rm i}\theta \sum_{k=1}^Ng_N(\mathcal{A}_1(x_{k,N}))}\right] 
          - \E\left[\e^{{\rm i}\theta \sum_{k=1}^NU_{k,N}}\right]\right| \nonumber\\
          &\qquad\qquad\qquad= \left| \E\left[\e^{{\rm i} \sum_{k=1}^N\theta g_N(\mathcal{A}_1(x_{k,N}))}\right] 
          -\prod_{k=1}^N \E\left[\e^{{\rm i} \theta g_N(\mathcal{A}_1(x_{k,N}))}\right]\right|.
           \end{align}
           Since the Airy$_1$ process is associated and $g_N$ is a nondecreasing function, 
           the random vector $(g_N(\mathcal{A}_1(x_{1,N})), \ldots, g_N(\mathcal{A}_1(x_{N,N}))$ is associated. Clearly, each
           component of this random vector has finite variance. Hence, by \eqref{Newman3}, we have for all $\theta\in\R$,
           \begin{align}\label{cov3}
           &\left| \E\left[\e^{{\rm i} \sum_{k=1}^N\theta g_N(\mathcal{A}_1(x_{k,N}))}\right] 
          -\prod_{k=1}^N \E\left[\e^{{\rm i} \theta g_N(\mathcal{A}_1(x_{k,N}))}\right]\right| \nonumber\\
          &\qquad\qquad\qquad \leq \frac12 \sum_{\substack{1\leq j, k\leq N\\ j\neq k}} \theta^2 
          \Cov(g_N(\mathcal{A}_1(x_{j,N}))\,, g_N(\mathcal{A}_1(x_{k,N})))\nonumber\\
          &\qquad \qquad\qquad\leq \frac12  [\lip(g_N)]^2 \theta^2\sum_{\substack{1\leq j, k\leq N\\ j\neq k}} 
          \Cov(\mathcal{A}_1(x_{j,N})\,, \mathcal{A}_1(x_{k,N})),
           \end{align}
           where the second inequality is due to \eqref{Newman2}.
           According to \cite[Theorem 1.1]{BBF23}, there exists a positive constant $c'$ such that for all $x, y\in \R$ 
           with $|x-y|>1$,
           \begin{align}\label{airycov}
           \Cov(\mathcal{A}_1(x)\,, \mathcal{A}_1(y)) \leq \e^{c'|x-y|^2}\e^{-\frac43|x-y|^3}.
           \end{align}
          Thus,  a combination of \eqref{cov4}, \eqref{cov3} and \eqref{airycov} yields that for all $\theta\in \R$,
         \begin{align}\label{cha1}
         &\left| \E\left[\e^{{\rm i}\theta \sum_{k=1}^Ng_N(\mathcal{A}_1(x_{k,N}))}\right] 
          - \E\left[\e^{{\rm i}\theta \sum_{k=1}^NU_{k,N}}\right]\right|\nonumber \\
                     &\qquad\qquad\qquad\leq  \frac{\theta^2}{2}[\lip(g_N)]^2 
           \sum_{\substack{1\leq j, k\leq N\\ j\neq k}} \e^{c'|x_{j, N}-x_{k,N}|^2}\, \e^{-\frac43|x_{j, N}-x_{k,N}|^3}
         \end{align}
        for all sufficiently large $N$.

                  Under the assumption \eqref{distance}, there exists $\alpha_1>0$ such that for all sufficiently large $N$
         \begin{align}\label{min}
         \min_{1\leq j\neq k\leq N}|x_{j, N}-x_{k,N}| \geq (3\log N)^{1/3}(1+\alpha_1).
         \end{align}
         Choose and fix $\epsilon \in (0, \frac23)$ such that 
         $
         4(1+\alpha_1)^3 > 2+\frac{2(2+6\epsilon)}{2+3\epsilon}.
         $               
         Furthermore, we can choose a sufficiently small number $\alpha_2\in (0, 1)$ such that
         \begin{align}\label{epalpha}
         3(\frac43-\alpha_2)(1+\alpha_1)^3 > 2+\frac{2(2+6\epsilon)}{2+3\epsilon}.
         \end{align}

           We first derive from \eqref{cha1} that for all sufficiently large $N$,
                   \begin{align*}
         \left| \E\left[\e^{{\rm i}\theta \sum_{k=1}^Ng_N(\mathcal{A}_1(x_{k,N}))}\right] 
          - \E\left[\e^{{\rm i}\theta \sum_{k=1}^NU_{k,N}}\right]\right|
          &\leq  \frac{\theta^2}{2}[\lip(g_N)]^2 
           \sum_{\substack{1\leq j, k\leq N\\ j\neq k}} \e^{-(\frac43-\alpha_2)|x_{j, N}-x_{k,N}|^3}\\
           &\leq  \frac{\theta^2}{2}[\lip(g_N)]^2 N^2
            \e^{-(\frac43-\alpha_2)\min_{1\le j\neq k\leq N}|x_{j, N}-x_{k,N}|^3}.
                    \end{align*}
         Then, by \eqref{lip} and Lemma \ref{lem:diff}, for all sufficiently large $N$,
                  \begin{align*}
         &\left| \E\left[\e^{{\rm i}\theta \sum_{k=1}^Ng_N(\mathcal{A}_1(x_{k,N}))}\right] 
          - \E\left[\e^{{\rm i}\theta \sum_{k=1}^NU_{k,N}}\right]\right|\nonumber \\
           &\qquad\qquad\qquad\leq  C^2\theta^2N^{2}(N+1)^{\frac{2(2+6\epsilon)}{2+3\epsilon}}
             \e^{-(\frac43-\alpha_2)\min_{1\leq j\neq k\leq N}|x_{j, N}-x_{k,N}|^3}\nonumber\\
             &\qquad\qquad\qquad \leq    C^2\theta^2N^{2}(N+1)^{\frac{2(2+6\epsilon)}{2+3\epsilon}}N^{-3(\frac43-\alpha_2)(1+\alpha_1)^3},
             \qquad \text{by \eqref{min}}
                      \end{align*}
          which implies by \eqref{epalpha} that for all $\theta\in\R$,
             \begin{align}\label{cha:diff2}
         \lim_{N\to\infty}\left| \E\left[\e^{{\rm i}\theta \sum_{k=1}^Ng_N(\mathcal{A}_1(x_{k,N}))}\right] 
          - \E\left[\e^{{\rm i}\theta \sum_{k=1}^NU_{k,N}}\right]\right|=0.
          \end{align}
          Therefore, we complete the proof of \eqref{Poisson:g} by \eqref{cha:diff2} and \eqref{Poisson:U}.        
\end{proof}

\begin{proposition}\label{prop:Poisson2}
           Fix $\lambda>0$.
          Let $g_{N-1}$ be defined as in \eqref{g_N} with $N$ replaced by $N-1$ for $N\geq 2$. 
          Let $\{x_{k, N\}_{N\geq1, 1\leq k\leq N}}\subset \R$ satisfy the 
          condition in \eqref{distance}. Then, as $N\to\infty$,
          \begin{align}\label{Poisson:g_N-1}
          \sum_{k=1}^Ng_{N-1}(\mathcal{A}_1(x_{k,N})) \xrightarrow{\rm d} {\rm Poisson}(\lambda).
          \end{align} 
\end{proposition}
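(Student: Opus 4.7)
The plan is to mimic the two-step argument establishing Proposition \ref{prop:Poisson}, with $g_N$ replaced throughout by $g_{N-1}$. On an enlarged probability space, let $V_{1,N},\ldots,V_{N,N}$ be independent random variables each distributed as $g_{N-1}(\mathcal{A}_1(0))$. The first step is to show $\sum_{k=1}^N V_{k,N}\xrightarrow{\rm d}{\rm Poisson}(\lambda)$. For this, observe that \eqref{lowerupper} applied at indices $N$ and $N-1$ gives the sandwich
\[
\bm{1}_{(\frac12{\rm T}(\lambda/N),\infty)}(z)\leq g_{N-1}(z)\leq \bm{1}_{(\frac12{\rm T}(\lambda/(N-1)),\infty)}(z).
\]
For $\theta>0$, exponentiating with weight $-\theta$ reverses the sandwich to
\[
1-\tfrac{\lambda}{N-1}(1-\e^{-\theta})\leq \E[\e^{-\theta V_{k,N}}]\leq 1-\tfrac{\lambda}{N}(1-\e^{-\theta}),
\]
and the product $\E[\e^{-\theta\sum_k V_{k,N}}]$ is then squeezed between two $N$-th powers both converging to $\e^{-\lambda(1-\e^{-\theta})}$ as $N\to\infty$; the symmetric bounds for $\theta<0$ extend this to all $\theta\in\R$.

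The second step runs exactly as in \eqref{cov4}--\eqref{cha1}. Since the Airy$_1$ process is associated and $g_{N-1}$ is nondecreasing, the vector $(g_{N-1}(\mathcal{A}_1(x_{1,N})),\ldots,g_{N-1}(\mathcal{A}_1(x_{N,N})))$ is associated, so \eqref{Newman3} combined with \eqref{Newman2} bounds the difference of the characteristic functions of $\sum_{k=1}^N g_{N-1}(\mathcal{A}_1(x_{k,N}))$ and $\sum_{k=1}^N V_{k,N}$ by
\[
\tfrac{\theta^2}{2}[\lip(g_{N-1})]^2\sum_{\substack{1\le j,k\le N\\ j\neq k}}\Cov(\mathcal{A}_1(x_{j,N}),\mathcal{A}_1(x_{k,N})).
\]
Applying \eqref{lip} at index $N-1$ and Lemma \ref{lem:diff}, $\lip(g_{N-1})$ is bounded by a constant multiple of $N^{(2+6\epsilon)/(2+3\epsilon)}$ for all large $N$. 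Feeding the super-exponential covariance bound \eqref{airycov} together with the minimum-distance estimate \eqref{min} into the sum, the overall bound takes the same form as in \eqref{cha:diff2} and vanishes by \eqref{epalpha}.

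Combining the two steps yields \eqref{Poisson:g_N-1}. I do not foresee any genuine obstacle: the proof is a parallel reprise of Proposition \ref{prop:Poisson}, and the only bookkeeping point is to verify that Lemma \ref{lem:diff} still dominates $\lip(g_{N-1})$ by the same polynomial order when the reference index is shifted from $N$ to $N-1$, so that the choice of $\epsilon$, $\alpha_1$, $\alpha_2$ fixed through \eqref{epalpha} in the previous proof absorbs the $\lip(g_{N-1})^2$ factor without modification.
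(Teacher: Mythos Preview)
Your proposal is correct and follows exactly the approach indicated in the paper, which simply states that the proof ``follows along the same lines as that of Proposition \ref{prop:Poisson}'' with $g_N$ replaced by $g_{N-1}$. You have supplied the details of that replacement accurately, including the shifted sandwich from \eqref{lowerupper} and the observation that Lemma \ref{lem:diff} (applied with $z=N$) controls $\lip(g_{N-1})$ by the same polynomial order, so nothing further is needed.
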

\begin{proof}
         The proof follows along the same lines as that of Proposition \ref{prop:Poisson}. One just needs to replace $g_N$ by 
         $g_{N-1}$.
\end{proof}

We are now ready to prove Theorem \ref{th:Poisson}.

\begin{proof}[{{Proof of Theorem \ref{th:Poisson}}}]
           This is an immediate consequence of Propositions \ref{prop:Poisson}, \ref{prop:Poisson2} and \eqref{lowerupper}. 
\end{proof}

\section{Asymptotic behavior of the maximum} \label{sec:max}
In this section, we prove Theorems  \ref{maximum} and \ref{th:A2}. We start with the following estimate on the upper tail probability of the maximum of the Airy$_1$ process.

\begin{proposition}\label{prop:tail}
          There exists a positive constant $C$ such that for all $M\geq2$
          \begin{align}\label{maxtail1}
          \mathrm{P}\left\{\max_{0\leq x\leq 1}\mathcal{A}_1(x)>M\right\} \leq C\, M^{7/4}\e^{(2-\sqrt{2})M+\sqrt{2M}} \e^{- \frac{4}{3}\sqrt{2}M^{3/2} },
          \end{align}
          and it holds that
          \begin{align}\label{maxtail2}
          \lim_{M\to\infty} \frac{\log \mathrm{P}\left\{\max_{0\leq x\leq 1}\mathcal{A}_1(x)>M\right\}}{M^{3/2}}=-\frac43\sqrt{2}.
          \end{align}
\end{proposition}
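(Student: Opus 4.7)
The plan is to prove \eqref{maxtail2} by sandwiching $M^{-3/2}\log\mathrm{P}\{\max_{0\leq x\leq 1}\mathcal{A}_1(x)>M\}$ between matching asymptotic bounds, with \eqref{maxtail1} providing the upper one. The lower asymptotic is immediate: the trivial inequality
$\mathrm{P}\{\max_{[0,1]}\mathcal{A}_1>M\}\geq \mathrm{P}\{\mathcal{A}_1(0)>M\}=1-F_1(2M)$,
combined with the lower bound in \eqref{TWtail} at $s=2M$ and $\epsilon\downarrow 0$, yields
$\liminf_{M\to\infty}M^{-3/2}\log\mathrm{P}\{\max_{[0,1]}\mathcal{A}_1>M\}\geq -\frac{4}{3}\sqrt{2}$.

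For the upper bound \eqref{maxtail1} I would use a discretisation argument exploiting stationarity of the Airy$_1$ process together with its local Brownian-motion behaviour from \cite{QuR13,Pim18}. Partition $[0,1]$ into $N=N(M)$ equal sub-intervals with endpoints $x_i=i/N$ and, for a shift $\eta=\eta(M)>0$ to be chosen, write
\begin{align*}
\mathrm{P}\{\max_{[0,1]}\mathcal{A}_1>M\}\leq N\,\mathrm{P}\{\mathcal{A}_1(0)>M-\eta\}+\sum_{i=0}^{N-1}\mathrm{P}\bigg\{\sup_{x\in[x_i,x_{i+1}]}|\mathcal{A}_1(x)-\mathcal{A}_1(x_i)|>\eta\bigg\},
\end{align*}
where stationarity collapses the first sum. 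The point term is controlled by the sharp one-point tail derived from \eqref{TW5}: integrating $F_1'$ and using $\int_s^\infty \e^{-\frac{2}{3}t^{3/2}}\,\mathrm{d} t\leq s^{-1/2}\e^{-\frac{2}{3}s^{3/2}}$ gives $\mathrm{P}\{\mathcal{A}_1(0)>M-\eta\}\leq C(M-\eta)^{-1/2}\e^{-\frac{4}{3}\sqrt{2}(M-\eta)^{3/2}}$ for $M-\eta$ large. The oscillation term requires a quantitative sub-Gaussian version of local Brownian behaviour, of the form
$\mathrm{P}\{\sup_{[0,\delta]}|\mathcal{A}_1(x)-\mathcal{A}_1(0)|>\eta\}\leq c_1\e^{-c_2\eta^2/\delta}$,
uniformly for small $\delta$ and large $\eta/\sqrt{\delta}$.

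The parameter choice that matches \eqref{maxtail1} is $N\asymp M^{9/4}$ and $\eta=\frac{\sqrt{2}-1}{2}M^{1/2}$: the Taylor expansion $\frac{4}{3}\sqrt{2}(M-\eta)^{3/2}=\frac{4}{3}\sqrt{2}M^{3/2}-(2-\sqrt{2})M+O(M^{1/2})$ reproduces the leading exponential $\e^{-\frac{4}{3}\sqrt{2}M^{3/2}+(2-\sqrt{2})M}$, the factor $N(M-\eta)^{-1/2}\sim M^{7/4}$ gives the correct polynomial prefactor, and the residual $O(M^{1/2})$ error is absorbed by $\e^{\sqrt{2M}}$. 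The oscillation sum at this choice is $\lesssim N\e^{-c_2\eta^2 N}\lesssim M^{9/4}\e^{-cM^{13/4}}$, which is negligible compared with the point term. The log-asymptotic \eqref{maxtail2} then follows from \eqref{maxtail1} by taking logarithms and dividing by $M^{3/2}$, matching the lower bound above.

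The main obstacle I anticipate is obtaining the precise sub-Gaussian oscillation estimate: \cite{QuR13, Pim18} provide local convergence of the Airy$_1$ increments to a Brownian motion \emph{in distribution}, but not directly a uniform tail bound on $\sup_{x\in[0,\delta]}|\mathcal{A}_1(x)-\mathcal{A}_1(0)|$ with the explicit $\e^{-c\eta^2/\delta}$ decay needed above. Such a quantitative oscillation bound is the technical heart of the argument and must presumably be extracted from the Fredholm-determinant representation of the Airy$_1$ finite-dimensional distributions or from a Brownian comparison at the level of the KPZ fixed point.
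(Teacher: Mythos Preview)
Your discretisation strategy is heuristically sound and the parameter tuning (with $\eta=\tfrac{\sqrt{2}-1}{2}M^{1/2}$ and $N\asymp M^{9/4}$) is done correctly, but the proposal is not a proof: as you yourself flag, the sub-Gaussian oscillation estimate
\[
\mathrm{P}\Big\{\sup_{x\in[0,\delta]}|\mathcal{A}_1(x)-\mathcal{A}_1(0)|>\eta\Big\}\leq c_1\e^{-c_2\eta^2/\delta}
\]
is not available from \cite{QuR13,Pim18}, which give only distributional convergence of rescaled increments to Brownian motion and H\"older continuity. No such uniform tail bound for the Airy$_1$ process is proved in the literature cited in the paper, and establishing one would itself require a careful analysis of the Fredholm determinant formula --- at which point you are essentially doing the paper's proof anyway, but indirectly.

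The paper bypasses the oscillation problem entirely by working directly with the continuum formula from \cite[Theorem~4]{QuR13},
\[
\mathrm{P}\Big\{\max_{0\leq x\leq 1}\mathcal{A}_1(x)\leq M\Big\}=\det\!\big(I-B_0+\Lambda_{[0,1]}^g\e^{-\Delta}B_0\big)_{L^2(\R)},
\]
which already encodes the supremum over the full interval. After conjugating by the multiplication operator $Uf(x)=\e^{-\sqrt{2M}x}f(x)$, the relevant operator factorises as $VW$ with explicit integral kernels; the paper then bounds $\|V\|_2$ and $\|W\|_2$ by direct steepest-descent-type estimates on the Airy function and on a Brownian-bridge hitting probability (the $g\equiv M$ case of the $\Lambda$ kernel), and finishes with $|1-\det(I-VW)|\leq \|VW\|_1\e^{\|VW\|_1+1}$. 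The specific prefactor $M^{7/4}$ and the subleading exponential $\e^{(2-\sqrt{2})M+\sqrt{2M}}$ emerge from the interaction of these two Hilbert--Schmidt norms, not from any discretisation. Your lower bound for \eqref{maxtail2} via $\mathrm{P}\{\mathcal{A}_1(0)>M\}$ and \eqref{TWtail} is exactly what the paper does.
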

\begin{proof}
          Because $\mathrm{P}\{\max_{0\leq x\leq 1}\mathcal{A}_1(x)>M\} \geq \mathrm{P}\{\mathcal{A}_1(0)>M\}$, the equality \eqref{maxtail2} follows immediately from the estimate \eqref{maxtail1} and the asymptotic behavior of the GOE Tracy-Widom distribution 
          (see \cite[Theorem 1]{DuV13}). In order to prove \eqref{maxtail1}, we recall from \cite[Theorem 4]{QuR13} that
          \begin{align}\label{det}
          \mathrm{P}\left\{\max_{0\leq x\leq 1}\mathcal{A}_1(x)>M\right\}&=1- \mathrm{P}\left\{\max_{0\leq x\leq 1}\mathcal{A}_1(x)\leq M\right\}
          =1- \mathrm{P}\left\{\mathcal{A}_1(x)\leq M, \text{for all $x\in [0, 1]$}\right\}\nonumber\\
          &=1- \det\left( I - B_0+ \Lambda_{[0, 1]}^g\e^{-\Delta}B_0 \right)_{L^2(\R)},
          \end{align}
          where in the Fredholm determinant, the kernel $\Lambda_{[0, 1]}^g$ has the formula given by (1.15) of \cite{QuR13} with $g\equiv M$ on $[0, 1]$ and the kernel $B_0$ is 
          given in terms of the Airy function (see the equality below (1.5) of \cite{QuR13}).

          We will adopt the method in the proof of \cite[Proposition 2.3(a)]{QuR13} to estimate the above tail probability. 
          We introduce the operator $U$ (depending on $M$) as
          \begin{align*}
          Uf(x)= \e^{-\sqrt{2M}x}f(x). 
          \end{align*}
          Let $\varphi(z)= \sqrt{1+z^2}$ and write
          \begin{align*}
          V(x,z)&= \left(\e^{\Delta} - \Lambda_{[0,1]}^g\right)(x, z)\e^{-\sqrt{2M}x}\varphi(z)\e^{-\sqrt{2M}z},\\
          W(z, y)&= \left(\e^{-\Delta}B_0\right)(z,y)\varphi(z)^{-1}\e^{\sqrt{2M}z}\e^{\sqrt{2M}y}.
          \end{align*}
          Then 
          \begin{align*}
          U\left(B_0- \Lambda_{[0, 1]}^g\e^{-\Delta}B_0\right)U^{-1}= VW. 
          \end{align*}
          We will estimate the Hilbert-Schmidt norms $\|V\|_2$ and $\|W\|_2$ and the use the inequality $\|VW\|_1\leq \|V\|_2\|W\|_2$ to give an upper bound on the trace norm 
          $\|VW\|_1$.
          
          We first estimate $\|W\|_2$. Denote by ${\rm Ai}$ the Airy function and $\|\varphi^{-1}\|_2$ the $L^2(\R)$-norm of $\varphi^{-1}$. Using (1.9) of \cite{QuR13}, we write
          \begin{align*}
          \|W\|_2^2&= \int_{\R^2}\mathrm{d} x \mathrm{d} y  \frac{\e^{-\frac43+2(\sqrt{2M}-1)(x+y)}}{\varphi(x)^2}{\rm Ai}(x+y+1)^2
          =\|\varphi^{-1}\|_2^2 \int_{-\infty}^{\infty} \e^{-\frac43+ 2(\sqrt{2M}-1)y}{\rm Ai}(y+1)^2\mathrm{d} y\\
          &=\|\varphi^{-1}\|_2^2\, \e^{-\frac43-2(\sqrt{2M}-1)}\int_{-\infty}^{\infty} \e^{2(\sqrt{2M}-1)y}{\rm Ai}(y)^2\mathrm{d} y.
          \end{align*}
          By the estimate (2.19) of \cite{QuR13} on the Airy function, we have for all $M\geq2$
          \begin{align}\label{eq:W}
          \|W\|_2^2&\leq C\|\varphi^{-1}\|_2^2\, \e^{-\frac43-2(\sqrt{2M}-1)}\left(\int_{-\infty}^0 \e^{2(\sqrt{2M}-1)y} \mathrm{d} y + \int_0^\infty \e^{2(\sqrt{2M}-1)y-\frac43y^{3/2}}\mathrm{d} y\right)\nonumber\\
          &\leq c_1' (\sqrt{2M}-1)^2\e^{-2(\sqrt{2M}-1)}\e^{\frac23(\sqrt{2M}-1)^3},
          \end{align}
          where the second inequality holds by Lemma \ref{lem:SD}.

          We proceed to estimate $\|V\|_2$. We use the formula (1.15) of \cite{QuR13} to write
          \begin{align*}
          V(x,y)= \frac{\varphi(y)}{\sqrt{4\pi}} \e^{-\frac{(x-y)^2}{4} -\sqrt{2M}(x+y)} \mathrm{P}_{\hat{b}(0)=x, \hat{b}(1)=y} \left(\hat{b}(t)\geq M \,\,\text{for some $t\in [0, 1]$}\right),
          \end{align*}
          where $\hat{b}$ denotes a Brownian bridge from $x$ at time $0$ to $y$ at time $1$ with diffusion constant $2$. This probability equals $\e^{-(x-M)(y-M)}$ 
          if $x\leq M, y\leq M$ and $1$ otherwise (see line one on page 621 of \cite{QuR13} or page 69 in \cite{BoS02}). Hence,
          \begin{align*}
          \|V\|_2^2&= \frac{1}{4\pi} \int_{(-\infty, M]^2}\mathrm{d} x \mathrm{d} y\, (1+y^2)\left[\e^{-\frac{(x-y)^2}{4} -\sqrt{2M}(x+y)-(x-M)(y-M) }\right]^2\\
          &\quad +  \frac{1}{4\pi} \int_{\R^2\setminus(-\infty, M]^2}\mathrm{d} x \mathrm{d} y\, (1+y^2)\left[\e^{-\frac{(x-y)^2}{4} -\sqrt{2M}(x+y) }\right]^2\\
          &:=I_1+I_2+I_3+I_4,
          \end{align*}
          where
          \begin{align*}
          I_1&= \frac{1}{4\pi} \int_{(-\infty, M]^2}\mathrm{d} x \mathrm{d} y\, \e^{-\frac{(x-y)^2}{2} -2\sqrt{2M}(x+y)-2(x-M)(y-M) },\\
           I_2&=\frac{1}{4\pi} \int_{(-\infty, M]^2}\mathrm{d} x \mathrm{d} y\, y^2\e^{-\frac{(x-y)^2}{2} -2\sqrt{2M}(x+y)-2(x-M)(y-M) },\\
           I_3&=\frac{1}{4\pi} \int_{\R^2\setminus(-\infty, M]^2}\mathrm{d} x \mathrm{d} y\, \e^{-\frac{(x-y)^2}{2} -2\sqrt{2M}(x+y) },\\
           I_4&=\frac{1}{4\pi} \int_{\R^2\setminus(-\infty, M]^2}\mathrm{d} x \mathrm{d} y\, y^2\e^{-\frac{(x-y)^2}{2} -2\sqrt{2M}(x+y) }.
          \end{align*}
          Using change of variable [$x\to x+M, y\to y+M$],
          \begin{align}\label{eq:I1}
          I_1&= \frac{\e^{-4\sqrt{2}M^{3/2}}}{4\pi} \int_{(-\infty, 0]^2}\mathrm{d} x \mathrm{d} y\, \e^{-\frac{(x+y)^2}{2} -2\sqrt{2M}(x+y)}
          \leq c_2' \e^{8M}\e^{-4\sqrt{2}M^{3/2}}
          \end{align}
          for all $M\geq2$, where the inequality holds by Lemma \ref{lem:SD2}.
          Similarly,  using the inequality $(a+b)^2 \leq 2a^2+2b^2$,
          \begin{align}\label{eq:I2}
          I_2&=\frac{\e^{-4\sqrt{2}M^{3/2}}}{4\pi} \int_{(-\infty, 0]^2}\mathrm{d} x \mathrm{d} y\, (y+M)^2\e^{-\frac{(x+y)^2}{2} -2\sqrt{2M}(x+y)} \nonumber\\
          &\leq 2M^2I_1 + \frac{\e^{-4\sqrt{2}M^{3/2}}}{2\pi} \int_{(-\infty, 0]^2}\mathrm{d} x \mathrm{d} y\, y^2\e^{-\frac{(x+y)^2}{2} -2\sqrt{2M}(x+y)} 
          \leq c_3' M^{2}\e^{8M}\e^{-4\sqrt{2}M^{3/2}}
          \end{align}
          for all $M\geq2$, where the second inequality holds by \ref{lem:SD3}
          
          We move on to estimate $I_3$ and $I_4$.  We use change of variable  [$x\to x+M, y\to y+M$] to see that
          \begin{align}\label{eq:I3}
          I_3&=\frac{\e^{-4\sqrt{2}M^{3/2}}}{4\pi} \int_{\R^2\setminus(-\infty, 0]^2}\mathrm{d} x \mathrm{d} y\, \e^{-\frac{(x-y)^2}{2} -2\sqrt{2M}(x+y) }
           \leq c_4' \sqrt{M}\e^{4M}\e^{-4\sqrt{2}M^{3/2}}
          \end{align}
          for all $M\geq2$, where the inequality holds by Lemma \ref{lem:SD4}.  It remains to estimate $I_4$. By change of variable  [$x\to x+M, y\to y+M$], 
          \begin{align}\label{eq:I4}
          I_4&=\frac{\e^{-4\sqrt{2}M^{3/2}}}{4\pi} \int_{\R^2\setminus(-\infty, 0]^2}\mathrm{d} x \mathrm{d} y\, (y+M)^2\e^{-\frac{(x-y)^2}{2} -2\sqrt{2M}(x+y) }   \nonumber\\
          &\leq 2M^2I_3 + \frac{\e^{-4\sqrt{2}M^{3/2}}}{2\pi} \int_{\R^2\setminus(-\infty, 0]^2}\mathrm{d} x \mathrm{d} y\, y^2\e^{-\frac{(x-y)^2}{2} -2\sqrt{2M}(x+y) }    
          \leq c_5' M^{5/2}\e^{4M} \e^{-4\sqrt{2}M^{3/2}}
          \end{align}
                    for all $M\geq2$, 
          where the second inequality holds by \eqref{eq:I3} and Lemma \ref{lem:SD5}.

          Now we combine the estimates \eqref{eq:W}, \eqref{eq:I1}, \eqref{eq:I2},\eqref{eq:I3} and \eqref{eq:I4} to see that there exists a constant $C'>0$ such that for all $M\geq2$,
          \begin{align*}
          \|V\|_2^2\|W\|_2^2 &\leq C'\, M^{7/2}\e^{(8-2\sqrt{2})M} \e^{\frac23(\sqrt{2M}-1)^3 - 4\sqrt{2}M^{3/2}}
          =C'\, \e^{-2/3}M^{7/2}\e^{(4-2\sqrt{2})M+2\sqrt{2M}} \e^{- \frac{8}{3}\sqrt{2}M^{3/2} }, 
          \end{align*}
          which implies that for all $M\geq2$,
          \begin{align}\label{VW}
          \|VW\|_1 \leq \|V\|_2\|W\|_2 \leq \sqrt{C'} \e^{-1/3}M^{7/4}\e^{(2-\sqrt{2})M+\sqrt{2M}} \e^{- \frac{4}{3}\sqrt{2}M^{3/2} }.
          \end{align}
          Thus, $U(B_0- \Lambda_{[0, 1]}^g\e^{-\Delta}B_0)U^{-1}$ is a trace class operator on $L^2(\R)$ and we deduce from \eqref{det} that
          \begin{align}\label{det2}
          \mathrm{P}\left\{\max_{0\leq x\leq 1}\mathcal{A}_1(x)>M\right\}
             &=1- \det\left( I - U(B_0- \Lambda_{[0, 1]}^g\e^{-\Delta}B_0)U^{-1} \right)_{L^2(\R)}\nonumber\\
             &=1- \det\left( I - VW \right)_{L^2(\R)}
             \leq \|VW\|_1\e^{\|VW\|_1 +1},
          \end{align}
          where the inequality follows from (3.1) of \cite{QuR13}.  Therefore, we conclude from \eqref{VW} and \eqref{det2} that there exists $C>0$ such that for all $M\geq2$,
          \begin{align*}
          \mathrm{P}\left\{\max_{0\leq x\leq 1}\mathcal{A}_1(x)>M\right\} \leq C\, M^{7/4}\e^{(2-\sqrt{2})M+\sqrt{2M}} \e^{- \frac{4}{3}\sqrt{2}M^{3/2} }.
          \end{align*}
          
          The proof is complete.          
\end{proof}

\begin{remark}
          We remark that another asymptotics on the probability that the Airy$_1$ process stays below a given
threshold $c$ for a time span of length $L$ is investigated in  \cite{FeL24}.
\end{remark}

Before we prove Theorem \ref{maximum}, we present a probability inequality for the Airy$_1$ process: there exists a constant $K>0$ such that for all $x, y\in\R$,
          \begin{align}\label{eq:prob}
          &\sup_{s, t\in \R}\left(\mathrm{P}\left\{\mathcal{A}_1(x) \leq s, \mathcal{A}_1(y)\leq t\right\} - 
          \mathrm{P}\left\{\mathcal{A}_1(x) \leq s\right\}\mathrm{P}\left\{\mathcal{A}_1(y)\leq t\right\}  \right)           \leq K\left[\Cov(\mathcal{A}_1(x)\,,
          \mathcal{A}_1(y))\right]^{1/3}.
          \end{align}
The above inequality follows from \cite[(6.2.20)]{PR12} (see also \cite[Theorem 6.2.15]{PR12})       because the Airy$_1$ process is associated and its one-point distribution, i.e., the GOE Tracy-Widom distribution has bounded and continuous density and finite second moment.

We are now ready to prove Theorem \ref{maximum}.

\begin{proof}[{Proof of Theorem \ref{maximum}}]
          We first show that almost surely
          \begin{align}\label{asymlow}
          \liminf_{N\to\infty} \frac{\max_{0\leq x\leq N}\mathcal{A}_1(x)}{(\log N)^{2/3}} \geq \frac12\left(\frac32\right)^{2/3}.
          \end{align}
          Let $\beta$ be a positive number that is strictly less than $\frac32$. Choose and fix $a, \epsilon \in (0, 1)$
           ($a$ close to $1$ and $\epsilon$ close to $0$) such that
          \begin{align}\label{exponent}
          \beta < \frac{3a}{2(1+\epsilon)}.
          \end{align}
          Define $x_j= jN/\floor*{N^a}$ 
          for $j=1, \ldots, \floor*{N^a}$. Here the notation $\floor*{x}$ denotes the largest integer which is less than or equal to $x$. By the stationarity of the Airy$_1$ process,
          \begin{align*}
          &\mathrm{P}\left\{\max_{0\leq x\leq N}\mathcal{A}_1(x) \leq  \frac12(\beta\log N)^{2/3}\right\}
           \leq \mathrm{P}\left\{\max_{1\leq j\leq \floor*{N^a}}\mathcal{A}_1(x_j) \leq  \frac12(\beta\log N)^{2/3}\right\}\\
          &\qquad\qquad=  \mathrm{P}\left\{\max_{1\leq j\leq \floor*{N^a}}\mathcal{A}_1(x_j) \leq  \frac12(\beta\log N)^{2/3}\right\}
          - \prod_{j=1}^{ \floor*{N^a}} \mathrm{P}\left\{\mathcal{A}_1(x_j) \leq  \frac12(\beta\log N)^{2/3}\right\}\\
          &\qquad\qquad\qquad \qquad\qquad\qquad\qquad\qquad
          + \left(\mathrm{P}\left\{\mathcal{A}_1(0) \leq  \frac12(\beta\log N)^{2/3}\right\} \right)^{ \floor*{N^a}}.
          \end{align*}
          Since the  Airy$_1$ process is associated, by \eqref{FKG2}, we see that
          \begin{align*}
          &\mathrm{P}\left\{\max_{0\leq x\leq N}\mathcal{A}_1(x) \leq  \frac12(\beta\log N)^{2/3}\right\} \\
          &\qquad \leq \sum_{1\leq j<k \leq  \floor*{N^a}}
          \bigg(\mathrm{P}\left\{\mathcal{A}_1(x_j)\leq  \frac12(\beta\log N)^{2/3}, \mathcal{A}_1(x_k)\leq  \frac12(\beta\log N)^{2/3}\right\}\\
          &\qquad\qquad\qquad\qquad\qquad\qquad\qquad
          -\mathrm{P}\left\{\mathcal{A}_1(x_j)\leq  \frac12(\beta\log N)^{2/3}\right\}\mathrm{P}\left\{\mathcal{A}_1(x_k)\leq  \frac12(\beta\log N)^{2/3}\right\}
          \bigg)\\
          &\qquad\qquad+ \left(\mathrm{P}\left\{\mathcal{A}_1(0) \leq  \frac12(\beta\log N)^{2/3}\right\} \right)^{ \floor*{N^a}}.
          \end{align*}
          We apply \eqref{eq:prob} to deduce that
          \begin{align}
          &\mathrm{P}\left\{\max_{0\leq x\leq N}\mathcal{A}_1(x) \leq  \frac12(\beta\log N)^{2/3}\right\} \nonumber\\
          &\qquad \leq K\sum_{1\leq j<k \leq  \floor*{N^a}} \left[\Cov(\mathcal{A}_1(x_j)\,,\mathcal{A}_1(x_k))\right]^{1/3}
          +\left(\mathrm{P}\left\{\mathcal{A}_1(0) \leq  \frac12(\beta\log N)^{2/3}\right\} \right)^{ \floor*{N^a}}. \label{cov:A1}
          \end{align}

          We can assume that $N$ is sufficiently large such that $N/  \floor*{N^a} >\max\{1, 3c'\}$ with $c'$ being the number in 
          \eqref{airycov}, $\floor*{N^a} > N^a/2$ and $(\beta\log N)^{2/3}>C_2$, where $C_2$ is the constant in \eqref{TWtail}. 
                     Hence,  
          by \eqref{airycov}, 
                    \begin{align*}
          &\mathrm{P}\left\{\max_{0\leq x\leq N}\mathcal{A}_1(x) \leq \frac12(\beta\log N)^{2/3}\right\} 
           \leq K\sum_{1\leq j<k \leq  \floor*{N^a}} \e^{\frac{c'}{3}|x_j-x_k|^2}\e^{-\frac49|x_j-x_k|^3}
          +F_1\left((\beta\log N)^{2/3}\right)^{ \floor*{N^a}}\\
          &\qquad\qquad\qquad \leq K\sum_{1\leq j<k \leq  \floor*{N^a}}\e^{-\frac13|x_j-x_k|^3} + \left(1-\e^{-\frac23(1+\epsilon)\beta\log N}\right)^{ \floor*{N^a}}\\
          & \qquad\qquad\qquad \leq KN^{2a}\e^{-\frac13N^{3(1-a)}} + \e^{-N^{-\frac23(1+\epsilon)\beta}\floor*{N^a}} 
          \leq KN^{2a}\e^{-\frac13N^{3(1-a)}} + \e^{-\frac12N^{-\frac23(1+\epsilon)\beta+a}},
          \end{align*}
          where in the second inequality, we use the first inequality in \eqref{TWtail}, and in the third inequality, we use the fact
          $1-x\leq \e^{-x}$ for all $x\geq0$. Since $-\frac23(1+\epsilon)\beta+a >0$ by \eqref{exponent}, we conclude that 
          \begin{align*}
          \sum_{N=1}^{\infty} \mathrm{P}\left\{\max_{0\leq x\leq N}\mathcal{A}_1(x) \leq \frac12(\beta\log N)^{2/3}\right\}<\infty. 
          \end{align*}
          Therefore, Borel-Cantelli's lemma implies that almost surely,
          \begin{align*}
          \liminf_{N\to\infty} \frac{\max_{0\leq x\leq N}\mathcal{A}_1(x)}{(\log N)^{2/3}} \geq \frac12\beta^{2/3}.
          \end{align*}
          The above liminf is taken along integers. By the monotonicity of $\max_{0\leq x\leq N}\mathcal{A}_1(x)$ 
          in $N$, we see that the preceding also 
          holds when the liminf is taken
          along real numbers. Finally, by letting $\beta \uparrow \frac32$,  we obtain \eqref{asymlow}.

          We turn to proving that almost surely
          \begin{align}\label{asymup}
          \limsup_{N\to\infty} \frac{\max_{0\leq x\leq N}\mathcal{A}_1(x)}{(\log N)^{2/3}} \leq \frac12\left(\frac32\right)^{2/3}.
          \end{align}
           Let $\gamma$ be a positive number that is strictly larger than $\frac{3}{4\sqrt{2}}$. 
          Choose and fix $\epsilon \in (0, 1)$ such that 
          \begin{align}\label{eq:exp1}
          \gamma > \frac{1}{\frac{4}{3}\sqrt{2}- \epsilon}.
          \end{align}
          For this fixed $\epsilon\in (0, 1)$, Proposition \ref{prop:tail} ensures that there exists constant $C_1>0$ depending on $\epsilon$ such that 
          \begin{align}\label{tail:ep}
          \mathrm{P}\left\{ \max_{0\leq x\leq 1}\mathcal{A}_1(x) >M\right\} \leq \e^{-(\frac43\sqrt{2}-\epsilon)M^{3/2}}, \quad \text{for all $M\geq C_1$}. 
          \end{align}
          Assume for the moment that $N$ is a sufficiently large integer such that $(\gamma\log N)^{2/3} \geq C_1$ with $C_1$ being the constant in \eqref{tail:ep}.
          By the stationarity of the Airy$_1$ process,
          \begin{align*}
          \mathrm{P}\left\{ \max_{0\leq x\leq N}\mathcal{A}_1(x) > (\gamma\log N)^{2/3} \right\} &= \mathrm{P}\left\{ \cup_{j=1}^N \left\{\max_{j-1\leq x\leq j}\mathcal{A}_1(x) >
           (\gamma\log N)^{3/2}\right\}\right\}\\
           & \leq N \mathrm{P}\left\{ \max_{0\leq x\leq 1}\mathcal{A}_1(x) > (\gamma\log N)^{2/3} \right\} \\
           & \leq N \e^{-(\frac43\sqrt{2}-\epsilon)\gamma \log N} = N^{1- (\frac43\sqrt{2}-\epsilon)\gamma},
          \end{align*}
          where the second inequality follows from \eqref{tail:ep}.  Since $ (\frac43\sqrt{2}-\epsilon)\gamma >1$ by \eqref{eq:exp1}, letting $N=2^n$, we obtain that 
          \begin{align*}
          \sum_{n=1}^\infty\mathrm{P}\left\{ \max_{0\leq x\leq 2^n}\mathcal{A}_1(x) > (\gamma\log 2^n)^{2/3} \right\}<\infty,
          \end{align*}
          which implies that almost surely
          \begin{align*}
          \limsup_{n\to\infty} \frac{\max_{0\leq x\leq 2^n}\mathcal{A}_1(x)}{(\log 2^n)^{2/3}} \leq \gamma^{2/3}
          \end{align*}
          by Borel-Cantelli's lemma. 
          Since the quantity $\max_{0\leq x\leq N}\mathcal{A}_1(x)$ is monotone in $N$, we obtain that almost surely
           \begin{align*}
          \limsup_{N\to\infty} \frac{\max_{0\leq x\leq N}\mathcal{A}_1(x)}{(\log N)^{2/3}} \leq \gamma^{2/3}. 
          \end{align*}
          Letting $\gamma \downarrow \frac{3}{4\sqrt{2}}$, we prove \eqref{asymup}.

          Theorem \ref{maximum} follows from \eqref{asymlow} and \eqref{asymup}.
\end{proof}

We proceed to prove Theorem \ref{th:A2}.  Recall from  \cite[Theorem 1]{DuV13} that  for $\epsilon\in (0, 1)$, there exists a positive constant $\tilde{C}_2$ depending on $\epsilon$ such that
          \begin{align}\label{tail:A2}
           \e^{-(\frac43+\epsilon)s^{3/2}} \leq 
            \mathrm{P}\{\mathcal{A}_2(0) \geq s\} \leq \e^{-(\frac43-\epsilon)s^{3/2}}, \quad \text{for all $s\geq \tilde{C}_2$}.
          \end{align}
Analogous to \eqref{eq:prob}, using the association property of the Airy$_2$ process and the fact that  the GUE Tracy-Widom distribution has bounded continuous 
          probability density function and finite second moment, we see from \cite[(6.2.20)]{PR12} (see also \cite[Theorem 6.2.15]{PR12}) that there exists a constant $K>0$ such that for all $x, y\in\R$,
          \begin{align}\label{eq:prob2}
          &\sup_{s, t\in \R}\left(\mathrm{P}\left\{\mathcal{A}_2(x) \leq s, \mathcal{A}_2(y)\leq t\right\} - 
          \mathrm{P}\left\{\mathcal{A}_2(x) \leq s\right\}\mathrm{P}\left\{\mathcal{A}_2(y)\leq t\right\}  \right)           \leq K\left[\Cov(\mathcal{A}_2(x)\,,
          \mathcal{A}_2(y))\right]^{1/3}.
          \end{align}

Now we start the proof of Theorem \ref{th:A2}.

\begin{proof}[{Proof of Theorem \ref{th:A2}}]
          We first prove the lower bound in  \eqref{eq:A2}. The strategy is similar to that of the proof of \eqref{eq:max}, using the 
          association property of the Airy$_2$ process, the asymptotic behavior of the covariance of the Airy$_2$ process 
          and the tail distribution of the GUE Tracy-Widom distribution. 
          
          Let $\beta$ be a positive number that is strictly less than $\frac14$. Choose and fix $a\in (0, \frac13)$,
           $\epsilon \in (0, 1)$ such that
          \begin{align}\label{exponent2}
          \beta < \frac{a}{\frac43+\epsilon}.
          \end{align}
         Define $x_j= jN/\floor*{N^a}$ 
          for $j=1, \ldots, \floor*{N^a}$. Then by the stationarity of the Airy$_2$ process,
          \begin{align*}
          &\mathrm{P}\left\{\max_{0\leq x\leq N}\mathcal{A}_2(x) \leq  (\beta\log N)^{2/3}\right\}
           \leq \mathrm{P}\left\{\max_{1\leq j\leq \floor*{N^a}}\mathcal{A}_2(x_j) \leq  (\beta\log N)^{2/3}\right\}\\
          &\qquad\qquad=  \mathrm{P}\left\{\max_{1\leq j\leq \floor*{N^a}}\mathcal{A}_2(x_j) \leq  (\beta\log N)^{2/3}\right\}
          - \prod_{j=1}^{ \floor*{N^a}} \mathrm{P}\left\{\mathcal{A}_2(x_j) \leq  (\beta\log N)^{2/3}\right\}\\
          &\qquad\qquad\qquad \qquad\qquad\qquad\qquad\qquad
          + \left(\mathrm{P}\left\{\mathcal{A}_2(0) \leq  (\beta\log N)^{2/3}\right\} \right)^{ \floor*{N^a}}.
          \end{align*}
          We can assume that $N$ is sufficiently large such that $\floor*{N^a} > N^a/2$ and $(\beta\log N)^{2/3}>C'_2$, 
          where $C'_2$ is the constant in \eqref{tail:A2}. Since the  Airy$_2$ process is associated (see Proposition \ref{A2:association}), by \eqref{FKG2}, we see that
          \begin{align}
          &\mathrm{P}\left\{\max_{0\leq x\leq N}\mathcal{A}_2(x) \leq  (\beta\log N)^{2/3}\right\}\nonumber \\
          &\qquad \leq \sum_{1\leq j<k \leq  \floor*{N^a}}
          \bigg(\mathrm{P}\left\{\mathcal{A}_2(x_j)\leq  (\beta\log N)^{2/3}, \mathcal{A}_2(x_k)\leq  (\beta\log N)^{2/3}\right\} \nonumber\\
          &\qquad\qquad\qquad\qquad\qquad\qquad\qquad
          -\mathrm{P}\left\{\mathcal{A}_2(x_j)\leq  (\beta\log N)^{2/3}\right\}\mathrm{P}\left\{\mathcal{A}_2(x_k)\leq  (\beta\log N)^{2/3}\right\}
          \bigg) \nonumber\\
          &\qquad\qquad+ \left(1-\mathrm{P}\left\{\mathcal{A}_2(0) >  (\beta\log N)^{2/3}\right\} \right)^{ \floor*{N^a}} \nonumber\\
          &\qquad \leq K  \sum_{1\leq j<k \leq  \floor*{N^a}}\left[\Cov(\mathcal{A}_2(x_j)\,, \mathcal{A}_2(x_k))\right]^{1/3}+
          \left(1-\mathrm{P}\left\{\mathcal{A}_2(0) >  (\beta\log N)^{2/3}\right\} \right)^{ \floor*{N^a}}, \label{cov:A2}
          \end{align}
          where the second inequality holds by \eqref{eq:prob2}. Moreover, since the decay rate of the covariance 
          $\Cov(\mathcal{A}_2(x),$ $\mathcal{A}_2(0))$ is $x^{-2}$ as $x\to\infty$ (see \cite{Wid04}), we deduce that
          \begin{align*}
          \mathrm{P}\left\{\max_{0\leq x\leq N}\mathcal{A}_2(x) \leq  (\beta\log N)^{2/3}\right\}& \leq K'   \sum_{1\leq j<k \leq  \floor*{N^a}} \frac{1}{|x_k-x_j|^{2/3}} + 
          \left(1-\e^{-(\frac43+\epsilon)\beta\log N}\right)^{  \floor*{N^a}}\\
          &\leq K'N^{\frac23(a-1)}\sum_{1\leq j<k \leq  \floor*{N^a}} \frac{1}{|k-j|^{2/3}} + 
          \e^{-  \floor*{N^a} N^{-(\frac43+\epsilon)\beta}},
          \end{align*}
          where we have used \eqref{tail:A2} in the first inequality and $1-x\leq \e^{-x}$ for all $x\geq0$ in the second inequality.
          Because for integer $m\geq2$,
          \begin{align*}
          \sum_{k=2}^m\sum_{j=1}^{k-1}\frac{1}{(k-j)^{2/3}}&=           \sum_{k=2}^m\sum_{j=1}^{k-1}\frac{1}{j^{2/3}} \leq 
          \sum_{k=2}^{m}\int_0^{k-1}\frac{\mathrm{d} x}{x^{2/3}}=3\sum_{k=2}^{m}(k-1)^{1/3} 
          \leq \frac94
          \int_0^m x^{1/3}\mathrm{d} x = \frac94 m^{4/3},
          \end{align*}
          we conclude that
                    \begin{align*}
          \mathrm{P}\left\{\max_{0\leq x\leq N}\mathcal{A}_2(x) \leq  (\beta\log N)^{2/3}\right\}& \leq \frac{9K'}{4}N^{\frac23(a-1)+\frac43a} + \e^{-\frac12 N^{a-(\frac43+\epsilon)\beta}}.   
           \end{align*}
           Letting $N=2^{n}$, we obtain that
           \begin{align*}
          \mathrm{P}\left\{\max_{0\leq x\leq 2^n}\mathcal{A}_2(x) \leq  (\beta\log 2^n)^{2/3}\right\}& \leq \frac{9K'}{4}2^{-\frac23(1-3a)n} + \e^{-\frac12 2^{(a-(\frac43+\epsilon)\beta)n}}.   
         \end{align*}
         Since $a<\frac13$ and $a>(\frac43+\epsilon)\beta$ (see \eqref{exponent2}), we see that
         \begin{align*}
         \sum_{n=1}^{\infty}\mathrm{P}\left\{\max_{0\leq x\leq 2^n}\mathcal{A}_2(x) \leq  (\beta\log 2^n)^{2/3}\right\}<\infty. 
         \end{align*}
         Hence, by Borel-Cantelli's lemma, we have almost surely, 
         \begin{align*}
         \liminf_{n\to\infty}\frac{\max_{0\leq x\leq 2^n}\mathcal{A}_2(x)}{(\log 2^n)^{2/3}} \geq \beta^{2/3}.
         \end{align*} 
         A monotonicity argument yields that almost surely,
         \begin{align*}
         \liminf_{N\to\infty}\frac{\max_{0\leq x\leq N}\mathcal{A}_2(x)}{(\log N)^{2/3}} \geq \beta^{2/3}.
         \end{align*} 
         Letting $\beta\uparrow \frac14$, we obtain the lower bound in \eqref{eq:A2}.

         We proceed to prove the upper bound in \eqref{eq:A2}. By \cite[Corollary 1.3]{CHH23} and stationarity of the Airy$_2$
         process, there exists a positive constant $s_0$ such that
         \begin{align}\label{uniformA2tail}
         \sup_{x\in\R} \mathrm{P}\left(\sup_{0\leq y\leq 2}|\mathcal{A}_2(x+y)-\mathcal{A}_2(x)| \geq s\right) \leq \e^{-\frac{s^2}{16}}, 
         \quad \text{for all $s>s_0$}.
         \end{align}
         Let $\gamma$ be a positive number that is strictly larger than $\frac34$.  Choose and fix $\delta\in (0, 1)$ and $\epsilon\in 
         (0, 1)$ such that
         \begin{align}\label{exponent3}
         (\frac{4}{3}-\epsilon)\delta^{\frac32}\gamma >1. 
         \end{align}
         Assume $N$ is sufficiently large so that 
         \begin{align*}
         \delta(\gamma\log N)^{2/3}> \tilde{C}_2 \quad \text{and}\quad  (1-\delta)(\gamma\log N)^{2/3} >s_0,
         \end{align*}
         where the constants $\tilde{C}_2$ and $s_0$ are given in \eqref{tail:A2} and \eqref{uniformA2tail} respectively. By the 
         triangle inequality and stationarity of the Airy$_2$ process, 
         \begin{align*}
         &\mathrm{P}\left(\max_{0\leq x\leq 2N}\mathcal{A}_2(x) \geq (\gamma\log (2N))^{2/3}\right) \leq 
         \mathrm{P}\left(\cup_{j=0}^{N-1}\left\{\mathcal{A}_2(2j) \geq \delta(\gamma\log (2N))^{2/3}\right\}\right)\\
         &\qquad\qquad\qquad\qquad\qquad
          +\mathrm{P}\left(\cup_{j=1}^N\sup_{y\in [2(j-1), 2j]}|\mathcal{A}_2(y)- \mathcal{A}_2(2(j-1))| \geq (1-\delta)(\gamma\log (2N))^{2/3}\right)\\
          &\qquad\quad \leq N\mathrm{P}\left(\mathcal{A}_2(0) \geq \delta(\gamma\log (2N))^{2/3}\right) + 
          N\mathrm{P}\left(\sup_{y\in [0,\, 2]}|\mathcal{A}_2(y)- \mathcal{A}_2(0)| \geq (1-\delta)(\gamma\log (2N))^{2/3}\right)\\
          &\qquad\quad \leq N\e^{-(\frac43-\epsilon)\delta^{\frac32}\gamma\log (2N)}+ N\e^{-\frac{(1-\delta)^2}{16}(\gamma\log (2N))^{4/3}} = c_0 N^{-(\frac43-\epsilon)\delta^{\frac32}\gamma+1}+ N\e^{-\frac{(1-\delta)^2}{16}(\gamma\log (2N))^{4/3}},
         \end{align*}
         where the last inequality follows from \eqref{tail:A2} and \eqref{uniformA2tail}. 
         Letting $N=2^n$ and because of \eqref{exponent3}, we obtain that
         \begin{align*}
         \sum_{n=1}^{\infty}\mathrm{P}\left(\max_{0\leq x\leq 2^{n+1}}\mathcal{A}_2(x) \geq (\gamma\log 2^{n+1})^{2/3}\right)<\infty,
         \end{align*}
         which implies that almost surely,
         \begin{align*}
         \limsup_{n\to\infty} \frac{\max_{0\leq x\leq 2^{n+1}}\mathcal{A}_2(x)}{\log 2^{n+1}} \leq \gamma^{2/3},
         \end{align*}
         thanks to Borel-Cantelli's lemma. Appealing to the monotonicity argument again, we conclude that almost surely, 
          \begin{align*}
         \limsup_{N\to\infty} \frac{\max_{0\leq x\leq N}\mathcal{A}_2(x)}{\log N} \leq \gamma^{2/3}.
         \end{align*}
         Letting $\gamma\downarrow\frac34$, we obtain the upper bound in \eqref{eq:A2}. 
         
         The proof is complete.
\end{proof}

\begin{remark}
         The approach for the lower bounds in Theorems \ref{maximum} and \ref{th:A2} essentially depends on the decay rate of the covariance of the Airy processes; see 
         \eqref{cov:A1} and \eqref{cov:A2}. The exponential decay rate of the Airy$_1$ process established in \cite{BBF23} leads to the desired lower bound in Theorem  \ref{maximum}, while the polynomial decay rate of the Airy$_2$ process established in \cite{Wid04} results in a non-optimal lower bound in Theorem \ref{th:A2}.  The precise limit of  the growth of maximum of the Airy$_2$ processes is proved in a recent work \cite[Theorem 1.1(i)]{BB24} using last passage percolation after this
paper appeared as preprint, and a different proof of Theorem \ref{maximum} is given in \cite[Theorem 1.2(i)]{BB24}.  See also \cite[Theorem 1.1(ii) and Theorem 1.2(ii)]{BB24} for the asymptotics of minimum of Airy processes. 
\end{remark}

\begin{appendix}
\section{Appendix}
We include in this section a few technical lemmas which are used in the proof of Proposition \ref{prop:tail}.

\begin{lemma}\label{lem:SD}
          There exists a constant $c_1>0$ such that for all $\theta\geq 1$
          \begin{align}\label{SD1}
           \int_0^\infty \e^{2\theta y-\frac43y^{3/2}}\d y\leq c_1\theta^2 \e^{\frac23\theta^3}.
          \end{align} 
\end{lemma}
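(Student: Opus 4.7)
The plan is to carry out a Laplace-type analysis around the maximizer of the exponent, but in a form loose enough to yield the stated (sub-optimal) bound without having to execute a full saddle-point expansion.

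First I would locate the maximum. Set $f(y) = 2\theta y - \tfrac{4}{3} y^{3/2}$, so $f'(y) = 2\theta - 2\sqrt{y}$ vanishes at $y = \theta^2$, and there $f$ attains its maximum value $f(\theta^2) = \tfrac{2}{3}\theta^3$. This already explains the exponential factor $\exp(\tfrac{2}{3}\theta^3)$ on the right-hand side of \eqref{SD1}.

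Next I would rescale by the location of the maximum. Substituting $y = \theta^2 z$ gives
\begin{align*}
\int_0^\infty e^{2\theta y - \frac{4}{3} y^{3/2}}\, dy \;=\; \theta^2 \int_0^\infty e^{\theta^3 g(z)}\, dz,
\end{align*}
where $g(z) := 2z - \tfrac{4}{3} z^{3/2}$. The function $g$ is smooth on $[0,\infty)$, vanishes at $0$, has a unique maximum at $z = 1$ with $g(1) = 2/3$, and decays to $-\infty$ like $-\tfrac{4}{3} z^{3/2}$; in particular $g(z) - 2/3 \leq 0$ for every $z \geq 0$.

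The key (and only delicate) step is then to control the rescaled integral by a constant. Factoring out $e^{\frac{2}{3}\theta^3}$ yields
\begin{align*}
\theta^2 \int_0^\infty e^{\theta^3 g(z)}\, dz \;=\; \theta^2 e^{\frac{2}{3}\theta^3} \int_0^\infty e^{\theta^3 (g(z) - 2/3)}\, dz.
\end{align*}
Since $g(z) - 2/3 \leq 0$ and $\theta^3 \geq 1$ under the assumption $\theta \geq 1$, one has the pointwise inequality $\exp\bigl(\theta^3 (g(z) - 2/3)\bigr) \leq \exp\bigl(g(z) - 2/3\bigr)$. Hence
\begin{align*}
\int_0^\infty e^{\theta^3 (g(z) - 2/3)}\, dz \;\leq\; e^{-2/3} \int_0^\infty e^{g(z)}\, dz,
\end{align*}
and the right-hand side is a finite constant because the super-linear decay $g(z) \sim -\tfrac{4}{3}z^{3/2}$ as $z\to\infty$ makes $e^{g}$ integrable at infinity (and $g$ is bounded on compact sets). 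Taking $c_1$ to be this constant gives \eqref{SD1}. No genuine obstacle arises; the only thing to be careful about is justifying the monotonicity step, for which the hypothesis $\theta \geq 1$ is used in exactly the right place. Sharper Laplace asymptotics would replace the $\theta^2$ prefactor by $\theta^{1/2}$, but the stated bound suffices for the application in Proposition \ref{prop:tail}.
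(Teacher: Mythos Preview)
Your proof is correct and follows the same route as the paper: both perform the substitution $y=\theta^2 z$ to reduce to $\theta^2\int_0^\infty e^{\theta^3 g(z)}\,dz$ with $g(z)=2z-\tfrac{4}{3}z^{3/2}$ maximized at $z=1$ with value $2/3$. The only difference is in bounding the remaining integral: the paper splits $[0,\infty)$ into three pieces and estimates each, whereas your monotonicity observation $e^{\theta^3(g(z)-2/3)}\le e^{g(z)-2/3}$ for $\theta\ge 1$ handles it in one stroke---a slightly cleaner execution of the same idea.
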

\begin{proof}
          The estimate follows from a standard calculation in the steepest-descent method (see for instance \cite[Appendix B]{BDS16}). We first use change of variable to see that
          \begin{align*}
           \int_0^\infty \e^{2\theta y-\frac43y^{3/2}}\mathrm{d} y =\theta^2 \int_0^\infty \e^{\theta^3(2y-\frac43y^{3/2})}\mathrm{d} y.
          \end{align*} 
          Denote $h(y)= 2y-\frac43y^{3/2}$. It is clear that the function $h$ is increasing on $[0, \frac12]$, maximized at $1$ in $[\frac12, 9]$ and $h(y)\leq -2y$ for all $y\geq9$. 
          Hence,
          \begin{align*}
          \int_0^\infty \e^{\theta^3(2y-\frac34y^{3/2})}\mathrm{d} y &= \int_0^{\frac12}\e^{\theta^3(2y-\frac34y^{3/2})}\mathrm{d} y  + \int_{\frac12}^9\e^{\theta^3(2y-\frac34y^{3/2})}\mathrm{d} y
          + \int_9^{\infty}\e^{-2\theta^3y}\mathrm{d} y\\
          &\leq \frac12\e^{h(1/2)\theta^3} + \frac{17}{2}\e^{h(1)\theta^3}+ \frac{1}{2\theta^3}\e^{-18\theta^3}
          \leq c_1 \e^{h(1)\theta^3}= c_1\e^{\frac23\theta^3}.
          \end{align*}
          This proves Lemma \ref{lem:SD}.
\end{proof}

\begin{lemma}\label{lem:SD2}
          There exists a constant $c_2>0$ such that for all $\theta\geq1$,
          \begin{align}\label{eq:SD2}
           \int_{(-\infty, 0]^2}\d x \d y\, \e^{-\frac{(x+y)^2}{2} -2\theta(x+y)} \leq c_2\,\e^{4\theta^2}.
          \end{align}
\end{lemma}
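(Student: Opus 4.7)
The plan is to reduce the double integral to a one-dimensional Gaussian integral by exploiting the fact that the integrand depends on $x$ and $y$ only through $x+y$. Explicitly, I would introduce the linear change of variables $u = x+y$, $v = x-y$ (or equivalently $v = x$), whose Jacobian is constant; the region $(-\infty,0]^2$ becomes $\{u \le 0,\ u \le v \le 0\}$, which has slice length $|u|$ at each fixed $u$. Integrating out $v$ gives
\begin{align*}
\int_{(-\infty, 0]^2}\mathrm{d} x\,\mathrm{d} y\, \e^{-\frac{(x+y)^2}{2} -2\theta(x+y)}
= \int_0^\infty s\, \e^{-\frac{s^2}{2}+2\theta s}\,\mathrm{d} s,
\end{align*}
after setting $s = -u \ge 0$.

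Next I would complete the square in the exponent, writing $-\frac{s^2}{2}+2\theta s = -\frac{(s-2\theta)^2}{2} + 2\theta^2$, which factors out the dominant Gaussian weight $\e^{2\theta^2}$. Then I would substitute $r = s - 2\theta$ to obtain
\begin{align*}
\int_0^\infty s\, \e^{-\frac{s^2}{2}+2\theta s}\,\mathrm{d} s
= \e^{2\theta^2}\int_{-2\theta}^\infty (r+2\theta)\,\e^{-r^2/2}\,\mathrm{d} r.
\end{align*}
The $r$ part integrates explicitly to $\e^{-2\theta^2}$, and the $2\theta$ part is bounded by $2\theta\sqrt{2\pi}$. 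Combining yields the essentially sharp estimate
\begin{align*}
\int_{(-\infty, 0]^2}\mathrm{d} x\,\mathrm{d} y\, \e^{-\frac{(x+y)^2}{2} -2\theta(x+y)}
\le 1 + 2\sqrt{2\pi}\,\theta\, \e^{2\theta^2}.
\end{align*}

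Finally, for $\theta \ge 1$ the right-hand side is certainly bounded by $c_2\,\e^{4\theta^2}$ (and even by $c_2\,\theta\,\e^{2\theta^2}$), so the claimed inequality follows. There is no real obstacle here: the stated bound $\e^{4\theta^2}$ is considerably weaker than what the Gaussian computation actually delivers, so the proof reduces to a routine change of variables plus completing the square. The only point needing minor care is tracking the slice length $|u|$ in the change of variables, which is what produces the factor $s$ in the one-dimensional integrand and is essential for the sharp $\e^{2\theta^2}$ growth (rather than the crude $\e^{4\theta^2}$ one would obtain by estimating $-(x+y)^2/2$ and $-2\theta(x+y)$ separately).
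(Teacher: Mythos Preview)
Your proof is correct and takes a cleaner route than the paper's. You exploit directly that the integrand depends only on $x+y$: the change of variables $(u,v)=(x+y,x)$ collapses the double integral to the single integral $\int_0^\infty s\,\e^{-s^2/2+2\theta s}\,\mathrm{d} s$, and completing the square yields the sharp bound $1+2\sqrt{2\pi}\,\theta\,\e^{2\theta^2}$, which is in fact stronger than the claimed $c_2\e^{4\theta^2}$.

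The paper instead rescales by $\theta$ (substituting $x\mapsto -\theta x$, $y\mapsto -\theta y$) to write the integral as $\theta^2\int_{[0,\infty)^2}\e^{\theta^2(-(x+y)^2/2+2(x+y))}\,\mathrm{d} x\,\mathrm{d} y$, then splits the quadrant into the three pieces $[0,4]^2$, $[0,4]\times[0,\infty)$, and $[4,\infty)^2$ and bounds each separately. The dominant contribution there comes from the $[4,\infty)^2$ piece and is of order $\e^{4\theta^2}$, matching the lemma's stated bound but not the sharper $\theta\e^{2\theta^2}$ your argument delivers. Your reduction to one dimension is both shorter and more informative; the paper's domain-splitting is more ad hoc but has the minor advantage of extending with essentially no change to the companion Lemma with the extra $y^2$ weight.
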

\begin{proof}
         Using change of variable,
         \begin{align*}
         &\int_{(-\infty, 0]^2}\mathrm{d} x \mathrm{d} y\, \e^{-\frac{(x+y)^2}{2} -2\theta(x+y)}= \theta^2\int_{[0, \infty)^2} \mathrm{d} x \mathrm{d} y\, \e^{\theta^2(-\frac{(x+y)^2}{2} +2(x+y))}\\
         &\qquad\qquad\qquad \qquad\leq \theta^2\int_{[0,4]^2}\mathrm{d} x\mathrm{d} y\,  \e^{\theta^2(-\frac{(x+y)^2}{2} +2(x+y))} + 2\theta^2\int_0^4\mathrm{d} x\int_0^\infty \mathrm{d} y\,  \e^{\theta^2(-\frac{(x+y)^2}{2} +2(x+y))}\\
         &\qquad \qquad \qquad \qquad \quad + \theta^2 \int_{[4, \infty)^2}\mathrm{d} x \mathrm{d} y\, \e^{\theta^2(-\frac{(x+y)^2}{2} +2(x+y))}.
         \end{align*}
         It is clear that
         \begin{align*}
         \int_{[0,4]^2}\mathrm{d} x\mathrm{d} y\,  \e^{\theta^2(-\frac{(x+y)^2}{2} +2(x+y))} \leq 16 \e^{2\theta^2}. 
         \end{align*}
         Moreover, 
         \begin{align*}
         \int_0^4\mathrm{d} x\int_0^\infty \mathrm{d} y\,  \e^{\theta^2(-\frac{(x+y)^2}{2} +2(x+y))} \leq 4\int_{-\infty}^\infty \e^{\theta^2(-\frac{y^2}2 + 2y)}\mathrm{d} y = \frac{4}{\theta}\e^{2\theta^2}\int_{-\infty}^
         \infty \e^{-\frac12y^2}\mathrm{d} y. 
         \end{align*}
         Furthermore, 
         \begin{align*}
         \int_{[4, \infty)^2}\mathrm{d} x \mathrm{d} y\, \e^{\theta^2(-\frac{(x+y)^2}{2} +2(x+y))} \leq \left(\int_4^\infty \e^{\theta^2(-\frac{y^2}{2} +2y)} \mathrm{d} y  \right)^2 \leq  
         \theta^{-2}\e^{4\theta^2}\left(\int_{-\infty}^\infty \e^{-y^2/2} \mathrm{d} y  \right)^2.
         \end{align*}
         The preceding estimates prove Lemma \ref{lem:SD2}.
\end{proof}

\begin{lemma}\label{lem:SD3}
          There exists a constant $c_3>0$ such that for all $\theta\geq1$,
          \begin{align}\label{eq:SD3}
           \int_{(-\infty, 0]^2}\d x \d y\, y^2\e^{-\frac{(x+y)^2}{2} -2\theta(x+y)} \leq  c_3\,\theta^2\e^{4\theta^2}.
          \end{align}
\end{lemma}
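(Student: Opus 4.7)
The plan is to follow the strategy of Lemma \ref{lem:SD2}, incorporating the extra $y^2$ weight that appears after the rescaling. Applying the change of variables $x\mapsto -\theta u$, $y\mapsto -\theta v$ (with Jacobian $\theta^2$ and $y^2=\theta^2 v^2$) rewrites the left-hand side of \eqref{eq:SD3} as
\[
\theta^4\int_{[0,\infty)^2} v^2\,\e^{\theta^2\bigl(-\frac{(u+v)^2}{2}+2(u+v)\bigr)}\,\d u\,\d v.
\]

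Following the proof of Lemma \ref{lem:SD2}, split $[0,\infty)^2$ into the four regions $[0,4]^2$, $[0,4]\times[4,\infty)$, $[4,\infty)\times[0,4]$, and $[4,\infty)^2$, and bound each separately. On $[0,4]^2$ use $v^2\leq 16$ together with $\sup_{s\geq 0}(-s^2/2+2s)=2$ to obtain a contribution of order $\e^{2\theta^2}$. On $[4,\infty)\times[0,4]$ use $v^2\leq 16$ and reduce directly to the calculation of Lemma \ref{lem:SD2}, giving a contribution of order $\e^{2\theta^2}/\theta$. On $[0,4]\times[4,\infty)$, substitute $s=u+v$ (and $v=s-u\leq s$ for $u\geq 0$) to obtain
\[
\int_0^4 \d u \int_{4+u}^\infty (s-u)^2\,\e^{\theta^2(-s^2/2+2s)}\,\d s \;\leq\; 4\int_4^\infty s^2\,\e^{\theta^2(-s^2/2+2s)}\,\d s;
\]
completing the square $-s^2/2+2s=-(s-2)^2/2+2$ and substituting $z=\theta(s-2)$ reduces this to $C\e^{2\theta^2}/\theta$.

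The dominant contribution comes from $[4,\infty)^2$, where the inequality $(u+v)^2\geq u^2+v^2$ valid for $u,v\geq 0$ separates the variables:
\[
\int_{[4,\infty)^2} v^2\,\e^{\theta^2(-(u+v)^2/2+2(u+v))}\,\d u\,\d v \;\leq\; \left(\int_4^\infty \e^{\theta^2(-u^2/2+2u)}\,\d u\right)\left(\int_4^\infty v^2\,\e^{\theta^2(-v^2/2+2v)}\,\d v\right).
\]
Via the same square-completion and substitution, each factor is at most $C\e^{2\theta^2}/\theta$, so this contribution is at most $C\e^{4\theta^2}/\theta^2$. Summing the four contributions bounds the bracketed integral by $C\e^{4\theta^2}/\theta^2$, and multiplying by the prefactor $\theta^4$ yields \eqref{eq:SD3}. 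No step is genuinely delicate; the only care needed is to verify that the Gaussian-moment integral $\int_\R(|z|/\theta+2)^2\,\e^{-z^2/2}\,\d z$ remains bounded uniformly in $\theta\geq 1$, which is immediate since $(|z|/\theta+2)^2\leq 2z^2+8$ for $\theta\geq 1$.
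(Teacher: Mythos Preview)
Your proof is correct and follows essentially the same route as the paper: the same rescaling $x\mapsto-\theta u$, $y\mapsto-\theta v$, the same decomposition of $[0,\infty)^2$ around the threshold $4$, the separation inequality $(u+v)^2\geq u^2+v^2$ on $[4,\infty)^2$, and the same square-completion for the one-dimensional Gaussian integrals. The only cosmetic difference is that you split into four regions and handle the two cross strips $[0,4]\times[4,\infty)$ and $[4,\infty)\times[0,4]$ separately, whereas the paper bounds $y^2\leq(x+y)^2$ on both, exploits the symmetry of $(x+y)$ to merge them, and extends the inner integral to $[0,\infty)$ for convenience; this is a bookkeeping choice, not a different idea.
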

\begin{proof}
         The proof is similar to that of Lemma A.2. Using change of variable,
         \begin{align*}
           &\int_{(-\infty, 0]^2}\mathrm{d} x \mathrm{d} y\, y^2\e^{-\frac{(x+y)^2}{2} -2\theta(x+y)} = \theta^4 \int_{[0, \infty)^2}\mathrm{d} x \mathrm{d} y\, y^2\e^{\theta^2(-\frac{(x+y)^2}{2} +2(x+y))} \\
           &\qquad \qquad \leq \theta^4 \int_{[0, 4]^2}\mathrm{d} x \mathrm{d} y\, y^2\e^{\theta^2(-\frac{(x+y)^2}{2} +2(x+y))}  + 
           2\theta^4  \int_0^4\mathrm{d} x \int_0^\infty\mathrm{d} y\, (x+y)^2\e^{\theta^2(-\frac{(x+y)^2}{2} +2(x+y))} \\
           &\qquad\qquad \quad + \theta^4 \int_{[4, \infty)^2}\mathrm{d} x \mathrm{d} y\, y^2\e^{\theta^2(-\frac{(x+y)^2}{2} +2(x+y))}.
          \end{align*}
          Evidently, 
          \begin{align*}
          \int_{[0, 4]^2}\mathrm{d} x \mathrm{d} y\, y^2\e^{\theta^2(-\frac{(x+y)^2}{2} +2(x+y))} \leq 256 \e^{2\theta^2}. 
          \end{align*}
          Moreover,
          \begin{align*}
          & \int_0^4\mathrm{d} x \int_0^\infty\mathrm{d} y\, (x+y)^2\e^{\theta^2(-\frac{(x+y)^2}{2} +2(x+y))}  \leq 4 \int_{-\infty}^\infty  y^2\e^{\theta^2(-\frac{y^2}{2} +2y)}\mathrm{d} y\\
          &\qquad\qquad\qquad \qquad\leq \frac{8}{\theta^3}\e^{2\theta^2} \int_{-\infty}^\infty  y^2\e^{-\frac{y^2}{2} }\mathrm{d} y
           + \frac{32}{\theta}\e^{2\theta^2} \int_{-\infty}^\infty  \e^{-\frac{y^2}{2} }\mathrm{d} y.
          \end{align*}
          Furthermore,
          \begin{align*}
          \int_{[4, \infty)^2}\mathrm{d} x \mathrm{d} y\, y^2\e^{\theta^2(-\frac{(x+y)^2}{2} +2(x+y))}&\leq \int_{-\infty}^\infty \e^{\theta^2(-\frac{x^2}{2} +2x)} \mathrm{d} x
          \int_{-\infty}^\infty y^2 \e^{\theta^2(-\frac{y^2}{2} +2y)} \mathrm{d} y
          \leq \frac{C}{\theta^2}\e^{4\theta^2}
          \end{align*}
          for some constant $C>0$. The preceding estimates prove Lemma \ref{lem:SD3}.
\end{proof}

\begin{lemma}\label{lem:SD4}
          There exists a constant $c_4>0$ such that for all $\theta\geq1$,
          \begin{align}\label{eq:SD4}
            \int_{\R^2\setminus(-\infty, 0]^2}\d x \d y\, \e^{-\frac{(x-y)^2}{2} -2\theta(x+y) } \leq  c_4\,\theta\e^{2\theta^2}.
          \end{align}
\end{lemma}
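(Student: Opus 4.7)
The plan is to split the domain $\R^2 \setminus (-\infty, 0]^2$ into three disjoint pieces and bound each via the change of variables $u = x+y$, $v = x-y$ (Jacobian $1/2$), which diagonalizes the Gaussian part $e^{-(x-y)^2/2} = e^{-v^2/2}$ and turns the linear part $-2\theta(x+y)$ into $-2\theta u$. Set
\[
R_1 = (0,\infty)\times(-\infty,0], \qquad R_2 = (-\infty,0]\times(0,\infty), \qquad R_3 = (0,\infty)^2.
\]
By the symmetry $(x,y) \leftrightarrow (y,x)$ of the integrand, the contributions over $R_1$ and $R_2$ are equal, so I only need to estimate $R_1$ and $R_3$.

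For $R_3$, both $-(x-y)^2/2 \le 0$ and $-2\theta(x+y)\le 0$; after switching to $(u,v)$-coordinates the region becomes $\{u>0,\ |v|\le u\}$, and bounding $\int_{-u}^u e^{-v^2/2}\,\d v \le \sqrt{2\pi}$ gives
\[
\int_{R_3} e^{-(x-y)^2/2 - 2\theta(x+y)}\,\d x\,\d y \;\le\; \frac{\sqrt{2\pi}}{2}\int_0^\infty e^{-2\theta u}\,\d u \;=\; \frac{\sqrt{2\pi}}{4\theta},
\]
which for $\theta \geq 1$ is dominated by $\theta\, e^{2\theta^2}$.

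For $R_1$, I substitute $y \mapsto -y'$ to convert the integral to one over $(0,\infty)^2$ with exponent $-(x+y')^2/2 - 2\theta x + 2\theta y'$. Then the change of variables $u = x+y'$, $v = x - y'$ (region $u > |v|$) decouples the exponents: the quadratic part depends only on $u$, while the linear part becomes $-2\theta v$. The inner integral in $v$ is
\[
\int_{-u}^{u} e^{-2\theta v}\,\d v = \frac{e^{2\theta u} - e^{-2\theta u}}{2\theta} \;\le\; \frac{1}{2\theta}\,e^{2\theta u}.
\]
Completing the square $-u^2/2 + 2\theta u = -(u-2\theta)^2/2 + 2\theta^2$ then yields
\[
\int_{R_1} \;\le\; \frac{1}{4\theta}\int_0^\infty e^{-u^2/2 + 2\theta u}\,\d u \;\le\; \frac{\sqrt{2\pi}}{4\theta}\, e^{2\theta^2}.
\]
Adding the $R_2$-contribution (equal to $R_1$) and the $R_3$-contribution, and using $1/\theta \le \theta$ for $\theta \ge 1$, gives the desired bound $c_4\,\theta\, e^{2\theta^2}$.

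There is no real obstacle here; the only mildly delicate point is ensuring the Gaussian in $v$ (over $R_3$) and the exponential in $v$ (over $R_1$) are handled by the correct variable in each region, and that the factor $1/\theta$ from the $R_1$ computation is absorbed into $\theta$ via $\theta \ge 1$ so as to match the stated right-hand side.
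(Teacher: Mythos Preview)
Your proof is correct and follows essentially the same approach as the paper: the same three-region decomposition of $\R^2\setminus(-\infty,0]^2$ (reduced to two by symmetry), the same change of variables $u=x+y$, $v=x-y$, and the same completion of the square on the $R_1$ piece. Your execution is in fact slightly cleaner than the paper's: you skip an initial rescaling $x\mapsto\theta x$, $y\mapsto\theta y$ that the paper performs, and on $R_1$ you evaluate the $v$-integral exactly as $(e^{2\theta u}-e^{-2\theta u})/(2\theta)$ rather than bounding $e^{-2\theta v}\le e^{2\theta u}$ pointwise, which spares you the extra factor of $u$ and the subsequent work the paper does to control it.
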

\begin{proof}
          Using change of variable,
         \begin{align*}
           \int_{\R^2\setminus(-\infty, 0]^2}\mathrm{d} x \mathrm{d} y\, \e^{-\frac{(x-y)^2}{2} -2\theta(x+y) }&=
            \theta^2\int_{\R^2\setminus(-\infty, 0]^2}\mathrm{d} x \mathrm{d} y\, \e^{\theta^2(-\frac{(x-y)^2}{2} -2(x+y)) }
            :=\theta^2(J_1+2J_2),
                     \end{align*}
         where
         \begin{align*}
         J_1&= \int_0^\infty\mathrm{d} x \int_0^\infty \mathrm{d} y\,  \e^{\theta^2(-\frac{(x-y)^2}{2} -2(x+y)) }, \quad
         J_2= \int_0^\infty\mathrm{d} x \int_{-\infty}^0 \mathrm{d} y\,  \e^{\theta^2(-\frac{(x-y)^2}{2} -2(x+y)) }.
                  \end{align*}
         We perform change of variables $x+y=u, x-y=v$ to see that
         \begin{align*}
         J_1 &= \frac12 \int_0^\infty \mathrm{d} u \int_{-u}^u\mathrm{d} v\,   \e^{\theta^2(-\frac{v^2}{2} -2u)} 
         \leq \int_0^\infty u\e^{-2\theta^2u}\mathrm{d} u = \frac1{\theta^4} \int_0^\infty u\e^{-2u}\mathrm{d} u. 
         \end{align*}
         Similarly, 
         \begin{align*}
         J_2&= \int_0^\infty\mathrm{d} x \int_0^{\infty} \mathrm{d} y\,  \e^{\theta^2(-\frac{(x+y)^2}{2} -2(x-y)) }
         = \frac12 \int_0^\infty \mathrm{d} u \int_{-u}^u\mathrm{d} v \, \e^{\theta^2(-\frac{u^2}{2} -2v) } \leq  \int_0^\infty u\e^{\theta^2(-\frac{u^2}{2} +2u) } \mathrm{d} u\\
         &\leq \e^{2\theta^2} \int_{-\infty}^\infty |u| \e^{\theta^2(u-2)^2/2}\mathrm{d} u \leq \e^{2\theta^2} \left(\int_{-\infty}^\infty |u| \e^{\theta^2u^2/2}\mathrm{d} u
         + 2\int_{-\infty}^\infty \e^{\theta^2u^2/2}\mathrm{d} u\right)\\
         &=\e^{2\theta^2} \left(\theta^{-2}\int_{-\infty}^\infty |u| \e^{u^2/2}\mathrm{d} u
         + 2\theta^{-1}\int_{-\infty}^\infty \e^{u^2/2}\mathrm{d} u\right).
         \end{align*}
           The bounds on $J_1, J_2$ prove Lemma \ref{lem:SD4}. 
\end{proof}

\begin{lemma}\label{lem:SD5}
          There exists a constant $c_5>0$ such that for all $\theta\geq1$,
          \begin{align}\label{eq:SD5}
            \int_{\R^2\setminus(-\infty, 0]^2}\d x \d y\, y^2\e^{-\frac{(x-y)^2}{2} -2\theta(x+y) } \leq  c_5\,\theta^3\e^{2\theta^2}.
          \end{align}
\end{lemma}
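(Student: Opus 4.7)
The plan is to mimic the proof of Lemma~\ref{lem:SD4}, carefully tracking the extra factor $y^2$. First I would rescale via $x\to\theta x$, $y\to\theta y$ to obtain
\begin{equation*}
\int_{\R^2\setminus(-\infty,0]^2}\!\d x\,\d y\, y^2\,\e^{-\frac{(x-y)^2}{2}-2\theta(x+y)}
=\theta^4\!\int_{\R^2\setminus(-\infty,0]^2}\!\d x\,\d y\, y^2\,\e^{\theta^2\left(-\frac{(x-y)^2}{2}-2(x+y)\right)},
\end{equation*}
so it suffices to show that the rescaled integral is $O(\theta^{-1}\e^{2\theta^2})$.

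Next I would split the rescaled domain into three pieces $Q_1=[0,\infty)^2$, $Q_2=[0,\infty)\times(-\infty,0]$, $Q_3=(-\infty,0]\times[0,\infty)$, with corresponding integrals $J_1,J_2,J_3$. In each $J_i$, I apply the appropriate sign flip ($y\to -y$ in $J_2$, $x\to -x$ in $J_3$) to reduce to an integral over $[0,\infty)^2$, then substitute $u=x+y$, $v=x-y$, so that the region becomes $\{u\ge 0,\ |v|\le u\}$ with $\d x\,\d y=\tfrac12\d u\,\d v$. The exponents become $-\theta^2(\tfrac{v^2}{2}+2u)$ for $J_1$, $-\theta^2(\tfrac{u^2}{2}+2v)$ for $J_2$, and $-\theta^2(\tfrac{u^2}{2}-2v)$ for $J_3$; the factor $y^2$ transforms into $\bigl(\tfrac{u\pm v}{2}\bigr)^2$, which I split using $(u\pm v)^2\le 2u^2+2v^2$ to separate the integrals into one-dimensional Gaussians.

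For $J_1$, the rapid Gaussian damping in both $u$ and $v$ together with standard estimates $\int_{-\infty}^\infty v^{2k}\e^{-\theta^2 v^2/2}\d v=O(\theta^{-2k-1})$ yields $J_1=O(\theta^{-5})$. For $J_2$ and $J_3$, I would bound $\int_{-u}^u\e^{\mp 2\theta^2 v}\,\d v\le \e^{2\theta^2 u}/(2\theta^2)$ and then complete the square $\theta^2(\tfrac{u^2}{2}-2u)=\tfrac{\theta^2(u-2)^2}{2}-2\theta^2$, reducing each to a Gaussian integral in $u$ centered at $u=2$ that contributes an additional $O(\theta^{-1})$. This gives $J_2,J_3=O(\theta^{-3}\e^{2\theta^2})$, and collecting produces $\theta^4(J_1+J_2+J_3)=O(\theta^3\e^{2\theta^2})$, as required.

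The main (rather minor) obstacle is purely algebraic: bookkeeping the powers of $\theta$ that emerge from the Gaussian integrals and completions-of-squares, and ensuring the quadratic split $(u\pm v)^2\le 2(u^2+v^2)$ does not destroy the optimal exponent. The $y^2$ factor costs exactly two powers of $\theta$ relative to Lemma~\ref{lem:SD4} (via the rescaling $y=\theta v$), which matches the stated bound $c_5\theta^3\e^{2\theta^2}$.
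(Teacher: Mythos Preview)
Your proposal is correct and follows essentially the same route as the paper's proof: the rescaling $x\mapsto\theta x$, $y\mapsto\theta y$, the three-way split of $\R^2\setminus(-\infty,0]^2$, the sign flips reducing each piece to $[0,\infty)^2$, and the change of variables $u=x+y$, $v=x-y$ followed by completing the square in $u$ are all identical to what the paper does. The only cosmetic differences are that the paper bounds $(u-v)^2/4\le u^2$ directly (using $|v|\le u$) rather than splitting via $(u\pm v)^2\le 2(u^2+v^2)$, and the paper uses the cruder estimate $\int_{-u}^u\e^{\mp 2\theta^2 v}\,\d v\le 2u\,\e^{2\theta^2 u}$ in place of your sharper $\e^{2\theta^2 u}/(2\theta^2)$; your version in fact yields $J_2,J_3=O(\theta^{-3}\e^{2\theta^2})$ and hence the overall bound $O(\theta\,\e^{2\theta^2})$, which is stronger than the stated $c_5\theta^3\e^{2\theta^2}$ and certainly sufficient.
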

\begin{proof}
          The proof is similar to that of Lemma A.4. We write
          \begin{align*}
          \int_{\R^2\setminus(-\infty, 0]^2}\mathrm{d} x \mathrm{d} y\, y^2\e^{-\frac{(x-y)^2}{2} -2\theta(x+y)} 
          &= \theta^4 \int_{\R^2\setminus(-\infty, 0]^2}\mathrm{d} x \mathrm{d} y\, y^2\e^{\theta^2(-\frac{(x-y)^2}{2} -2(x+y))} \\
          &:=\theta^4(T_1+T_2+T_3),
          \end{align*}
          where
          \begin{align*}
          T_1&=\int_0^\infty\mathrm{d} x \int_0^\infty \mathrm{d} y\, y^2\e^{\theta^2(-\frac{(x-y)^2}{2} -2(x+y))}, \\
          T_2&=\int_0^{\infty}\mathrm{d} x \int^0_{-\infty} \mathrm{d} y\, y^2\e^{\theta^2(-\frac{(x-y)^2}{2} -2(x+y))}, \\
          T_3&=\int^0_{-\infty}\mathrm{d} x \int_0^{\infty} \mathrm{d} y\, y^2\e^{\theta^2(-\frac{(x-y)^2}{2} -2(x+y))}. 
          \end{align*}
          Using change of variables $x+y=u, x-y=v$,
          \begin{align*}
          T_1&=\frac12\int_0^\infty\mathrm{d} u \int_{-u}^u \mathrm{d} v\, \frac{(u-v)^2}{4}\e^{\theta^2(-\frac{v^2}{2} -2u)}\\
          &\leq \frac12 \int_0^\infty u^2\e^{-2\theta^2u}\mathrm{d} u \int_{-\infty}^\infty \e^{-\theta^2v^2/2} \mathrm{d} v 
          =\frac{1}{2\theta^5}\int_0^\infty u^2\e^{-2u}\mathrm{d} u \int_{-\infty}^\infty \e^{-v^2/2} \mathrm{d} v.
          \end{align*}
          Similarly,
          \begin{align*}
          T_2&=\int_0^{\infty}\mathrm{d} x \int_0^{\infty} \mathrm{d} y\, y^2\e^{\theta^2(-\frac{(x+y)^2}{2} -2(x-y))}
          = \frac12 \int_0^\infty \mathrm{d} u\int_{-u}^u \mathrm{d} v\, \frac{(u-v)^2}{4} \e^{\theta^2(-\frac{u^2}{2} -2v)} \\
          &\leq \frac12 \int_0^\infty \mathrm{d} u\int_{-u}^u\mathrm{d} v\, u^2 \e^{\theta^2(-\frac{u^2}{2} -2v)}
          \leq \int_0^\infty  u^3 \e^{\theta^2(-\frac{u^2}{2} +2u)}\mathrm{d} u \\
          &\leq \e^{2\theta^2} \int_{-\infty}^\infty |u+2|^3 \e^{-\theta^2u^2/2}\mathrm{d} u \leq 4\e^{2\theta^2} \int_{-\infty}^\infty 
          (|u|^3+8) \e^{-\theta^2u^2/2}\mathrm{d} u\\
          &= 4\e^{2\theta^2} \left(\theta^{-4}\int_{-\infty}^\infty 
          |u|^3\e^{-u^2/2}\mathrm{d} u +  8\theta^{-1}  \int_{-\infty}^\infty 
          \e^{-u^2/2}\mathrm{d} u\right),
          \end{align*}
          where in the fourth inequality, we used the inequality $(|a| +|b|)^3\leq 4(|a|^3+|b^3|)$.  Moreover,
          \begin{align*}
          T_3&=\int_0^{\infty}\mathrm{d} x \int_0^{\infty} \mathrm{d} y\, y^2\e^{\theta^2(-\frac{(x+y)^2}{2} +2(x-y))}= \frac12 \int_0^\infty \mathrm{d} u \int_{-u}^u\mathrm{d} v\, 
          \frac{(u-v)^2}{4} e^{\theta^2(-\frac{u^2}{2} +2v)}\\
          & \leq \frac12 \int_0^\infty \mathrm{d} u \int_{-u}^u\mathrm{d} v\, 
          u^2 e^{\theta^2(-\frac{u^2}{2} +2v)}\leq \int_0^\infty  u^3 \e^{\theta^2(-\frac{u^2}{2} +2u)}\mathrm{d} u \\
                    &\leq  4\e^{2\theta^2} \left(\theta^{-4}\int_{-\infty}^\infty 
          |u|^3\e^{-u^2/2}\mathrm{d} u +  8\theta^{-1}  \int_{-\infty}^\infty 
          \e^{-u^2/2}\mathrm{d} u\right),
          \end{align*}
          where the last inequality holds by the same arguments as in the estimate of $T_2$.  The bounds on $T_1, T_2, T_3$ prove Lemma \ref{lem:SD5}.          
\end{proof}

\end{appendix}

\noindent\textbf{Acknowledgement}.  Research supported in part by Beijing Natural Science Foundation (No. 1232010),
National Natural Science Foundation of China (No. 12201047) and 
National Key R\&D Program of China (No. 2022YFA 1006500). The author would like to thank Davar Khoshnevisan and Jeremy Quastel for useful discussions. The author is grateful to the editor and two anonymous referees for their valuable comments and suggestions which improved the presentation of this paper.

 \bigskip

 \begin{small}
\noindent\textbf{Fei Pu}
Laboratory of Mathematics and Complex Systems,
School of Mathematical Sciences, Beijing Normal University, 100875, Beijing, China.\\
Email: \texttt{fei.pu@bnu.edu.cn}\\
\end{small}

\end{document}